\font\smallit=cmti10
\font\smalltt=cmtt10
\renewcommand\section{\@startsection {section}{1}{\z@}
	{-30pt \@plus -1ex \@minus -.2ex}
	{2.3ex \@plus.2ex}
	{\normalfont\normalsize\bfseries\boldmath}}
\renewcommand\subsection{\@startsection{subsection}{2}{\z@}
	{-3.25ex\@plus -1ex \@minus -.2ex}
	{1.5ex \@plus .2ex}
	{\normalfont\normalsize\bfseries\boldmath}}
\renewcommand{\@seccntformat}[1]{\csname the#1\endcsname. }
\newtheorem{theorem}{Theorem}
\newtheorem{proposition}[theorem]{Proposition}
\newtheorem{claim}[theorem]{Claim}
\theoremstyle{definition}
\newtheorem{definition}[theorem]{Definition}
\newcommand{\NN}{\ensuremath{\mathbb N}}
\newcommand{\ZZ}{\ensuremath{\mathbb Z}}
\newcommand{\CC}{\ensuremath{\mathbb C}}
\newcommand{\fallingfactorial}[2]{#1^{\underline{#2}}}
\newcommand{\risingfactorial}[2]{#1^{\overline{#2}}}
\newcommand{\ve}{\vec {e}}
\newcommand{\vr}{\vec {\rho}}
\newcommand{\vo}{\vec{\omega}}
\newcommand{\ff}{\fallingfactorial}
\newcommand{\rf}{\risingfactorial}
\DeclareMathOperator{\Poly}{\textsc{Poly}}
\DeclareMathOperator{\Rem}{\textsc{Rem}}
\newcommand{\sH}[4]{\Poly_{#1}\!\big( #2 \, \big\lvert\, \substack{#3 \\ #4} \big)}
\newcommand{\sG}[3]{\Rem\!\big( #1 \, \big\lvert \, \substack{#2 \\ #3} \big)}
\newcommand{\sI}[3]{I\big( #1 \, \big\lvert \, \substack{#2 \\ #3} \big)}
\newcommand{\Gzor}{\sG{z}{\vo}{\vr}}
\newcommand{\Hzor}{\sH{m}{z}{\vo}{\vr}}
\mathchardef\pFcomma=\mathcode`, 
\newcommand*\pFq[5]{%
  \begingroup
  \begingroup\lccode`~=`,
    \lowercase{\endgroup\def~}{\pFcomma\mkern\pFqskip}%
  \mathcode`,=\string"8000
  {}_{#1}F_{#2}\biggl[\genfrac..{0pt}{}{#3}{#4} \,;\, #5\biggr]%
  \endgroup
}
\newcommand*\MeijerG[7]{%
  \begingroup
  \begingroup\lccode`~=`,
    \lowercase{\endgroup\def~}{\pFcomma\mkern\pFqskip}%
  \mathcode`,=\string"8000
  G^{#1,#2}_{#3,#4}\biggl(#5 \,\bigg\lvert\, \genfrac..{0pt}{}{#6}{#7} \biggr)%
  \endgroup
}
\DeclareMathOperator{\ord}{ord_{z=0}} 
\DeclareMathOperator{\adj}{adj} 
\DeclareMathOperator{\PV}{P.V.} 
\DeclareMathOperator{\sgn}{sgn} 
\begin{document}
\begin{center}
	\uppercase{\bf Multidimensional Padé approximation of binomial functions: Equalities}
	\vskip 20pt
{\bf Michael A. Bennett\footnote{supported in part by a Natural Sciences and Engineering Research Council of Canada Discovery Grant}}\\
	{\smallit Department of Mathematics, University of British Columbia, Room 121, 1984 Mathematics Road, Vancouver, BC, Canada}\\
	{\tt bennett@math.ubc.ca}\\
	\vskip 10pt
{\bf Greg Martin\footnote{supported in part by a Natural Sciences and Engineering Research Council of Canada Discovery Grant}}\\
	{\smallit Department of Mathematics, University of British Columbia, Room 121, 1984 Mathematics Road, Vancouver, BC, Canada}\\
	{\tt gerg@math.ubc.ca}\\
	\vskip 10pt
{\bf Kevin O'Bryant\footnote{Support for this project was provided by a PSC-CUNY Award, jointly funded by The Professional Staff Congress and The City University of New York.}}\\
	{\smallit Department of Mathematics, City University of New York, The College of Staten Island and The Graduate Center \\ 2800 Victory Boulevard, Staten Island, NY USA}\\
	{\tt kevin.obryant@csi.cuny.edu}\\
\end{center}
\vskip 20pt
\centerline{\smallit Received: 1/9/21, Revised: 8/3/21, Accepted: 8/12/21, Published: 8/23/21 } 
\vskip 30pt

\begin{quote}
	This article appears in the Ron Graham Memorial Volume of Integers: Electronic Journal of Combinatorial Number Theory. (Integers 21A (2021), Paper No.\ A4, 29 pages). 
\end{quote}

\centerline{\bf Abstract}
\noindent
Let $\omega_0,\dots,\omega_M$ be complex numbers. If $H_0,\dots,H_M$ are polynomials of degree at most $\rho_0,\dots,\rho_M$, and $G(z)=\sum_{m=0}
^M H_m(z) (1-z)^{\omega_m}$ has a zero at $z=0$ of maximal order (for the given $\omega_m,\rho_m$), we say that $H_0,\dots,H_M$ are a 
\emph{multidimensional Padé approximation of binomial functions}, and call $G$ the Padé remainder. We collect here with proof all of the known expressions 
for $G$ and $H_m$, including a new one: the Taylor series of $G$. We  also give a new criterion for systems of Padé approximations of binomial functions to be perfect (a specific sort of independence used in applications).

\pagestyle{myheadings} 
\markright{\smalltt INTEGERS: 21 (2021)\hfill} 
\thispagestyle{empty} 
\baselineskip=12.875pt 
\vskip 30pt


\section{Introduction}
Fix complex functions $f_0,f_1,\dots,f_M$ (all analytic in a neighborhood of 0) and nonnegative integers $\rho_0,\dots,\rho_M$. The set of functions
	 \[X \coloneqq  \left\{ \sum_{m=0}^M H_m(z) f_m(z) : H_m \in \CC[z], \deg(H_m)\leq \rho_m \right\}\]
forms a finite dimensional vector space, and the subsets of functions 
	 \[X_s \coloneqq  \left\{ G \in X : \ord(G)\geq s\right\}\]
with a zero at $z=0$ of order at least $s$ are subspaces. Trivially $X_0 \supseteq X_1 \supseteq X_2 \supseteq \cdots$. Let $\sigma$ be the least integer with $X_\sigma$ having dimension 0, if such $\sigma$ exists. Then $X_{\sigma-1}$ has positive dimension, and the functions in 
$X_{\sigma-1}$ are of particular interest, and are called the {\it Padé remainders  of $f_0,\dots,f_M$}.

The $M=1$ case is the standard tool in numerical analysis known as
Padé approximation~\cite{BakerGravesMorris}, which generalizes Taylor Series. In particular, if $f_0(z)=-1$ identically, and $\rho_1=0$, then
	 \[X=\left\{-H_0(z)+ H_1 \cdot f_1(z) : H_0 \in \CC[z],  \deg(H_0) \leq \rho_0, H_1 \in \CC \right\}.\]
Taking $H_0(z)/H_1$ to be the $\rho_0$-th Taylor polynomial of $f_1(z)$, we find that the Padé remainders are the constant multiples of the Taylor polynomial remainder. 
Letting $\rho_1\geq 0$ leads to rational functions $H_0(z)/H_1(z)$ that approximate $f_1(z)$ at least as well as Taylor polynomials. If $f_1$ has poles near 
0, then this rational approximation is typically much sharper than the Taylor's polynomial approximation.

When $M>1$, we include the adjective ``multidimensional''. This setting has not been exploited as systematically as the $M=1$ case. For a few particular choices of $f_0,\dots,f_M$, there is enough structure that we can work out explicit formulae for the Padé remainders and for the system of Padé approximants, i.e., generating polynomials $H_0,\dots,H_M$. In this paper, we take the binomials $f_m(z) \coloneqq (1-z)^{\omega_m}$ for complex numbers $\omega_0,\dots,\omega_M$, no pair of which has an integer difference. The resulting system of equations was studied by Riemann~\cite{Riemann}, Thue~\cite{Thue}, Siegel~\cite{Siegel}, Mahler~\cite{Mahler}, Baker~\cite{Baker}, Chudnovsky~\cite{Chudnovsky}, Bennett~\cite{Bennett}, and many others, and the use of these Padé approximations for Diophantine analysis is known as the method of Thue-Siegel.

We present in this article our exposition of these classic results on multidimensional Padé approximation of binomial functions.  We combine, and in some cases simplify, the work of Mahler and Jager~\cites{Mahler,Jager}. While there are some original results here, e.g., Theorem~\ref{thm:G}(\ref{thm:G(iv)}) and some cases of Theorem~\ref{thm:perfect}, we see the main value of this work as collating the work of many people over many years with common notation,  complete proofs, and specialization to the choice $f_m(z)$. The results presented in this work are equalities, and so as a check against off-by-one errors, one can implement the various forms given and directly check the equations for randomly chosen parameters. We have done so in Mathematica; a notebook containing these calculations is on the arXiv.

The current work focuses on various expressions for the Padé remainders and approximants. In subsequent works, we will provide new bounds, both archimedian and non-archimedian, on the size of the approximant polynomials $H_0,\dots,H_M$ and on the Padé remainder, and will exploit  those bounds to give new irrationality measures for some numbers  of the form $(a/b)^{s/n}$.

\section{Statement of Results}

Let $M$ be a nonnegative integer. Consider $\vo\coloneqq \langle \omega_0, \omega_1,\dots,\omega_M\rangle$, a vector of $M+1$ distinct complex numbers, no 
pair of which has a difference that is an integer, and $\vr \coloneqq  \langle \rho_0, \dots,\rho_M \rangle$, a vector of $M+1$ nonnegative integers (typically not 
distinct). We index the vectors $\vo\in\CC^{M+1},\vr\in \NN^{M+1}$ with $0,1,\dots,M$; for example, the $0$-th  coordinate of $\vr$ is $\rho_0$ and the $M$-th coordinate is $\rho_M$. We will only consider $M$, $\vo$, $\vr$ satisfying these constraints. Two fundamental parameters are
	\[
	\sigma = \sigma(\vr) \coloneqq  \sum_{m=0}^M (\rho_m+1), \quad\text{ and }\quad
	\vr\,! \coloneqq  \prod_{m=0}^M \rho_m!.
	\]

Some notation used in Theorem~\ref{thm:consistency} is both standard and uncommon; we give definitions in the next section. When we add a scalar to a vector, we mean that the scalar is added to each coordinate, such as $\vr+1= \langle \rho_0+1,\rho_2+1,\dots,\rho_M+1\rangle$. When we delete the $m$-th coordinate, reducing the length of the vector by 1, we use a ``$\star m$'' exponent, such as
	\[\vo^{\star m} = \langle \omega_0, \dots, \omega_{m-1},\omega_{m+1}, \dots, \omega_M\rangle.\]
The standard basis vectors are denoted $\ve_0,\ve_1,\dots,\ve_M$.

\begin{theorem}\label{thm:consistency}
Let $\vr$ and $\vo$ be fixed vectors as above. 
\begin{enumerate}[label=(\roman*)]
\item \label{thm:consistency, existence}
    (Existence) There are polynomials $H_m$ in $z$ of degree at most $\rho_m$, with at least one $H_m$ not identically 0, and with
	\[ G(z)\coloneqq \sum_{m=0}^M H_m(z)(1-z)^{\omega_m}\] 
having a zero of order at least $\sigma-1$ at $z=0$. 
\item \label{thm:consistency, uniqueness}
    (Uniqueness) For such $G(z)$, the function $G(z)$ necessarily has a zero of order exactly $\sigma-1$ at $z=0$, and furthermore the polynomials $H_m(z)$ are uniquely determined given the additional constraint that 
\[\lim_{z\to0} \frac{G(z)}{z^{\sigma-1}} = \frac{1}{(\sigma-1)!}.\]
Each $H_m(z)$ has degree exactly $\rho_m$. There is no $\alpha\in\CC$ with
    \[ H_0(\alpha)  = \dots = H_M(\alpha) = 0.\]
\item \label{thm:consistency, domain}
    (Domain) $G(z)$ is analytic on $\CC \setminus[1,\infty)$.
\end{enumerate}
\end{theorem}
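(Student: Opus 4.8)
The plan is to treat the three parts in order, with part~\ref{thm:consistency, existence} reduced to a dimension count, part~\ref{thm:consistency, uniqueness} to a Wronskian-type nonvanishing argument, and part~\ref{thm:consistency, domain} to an inspection of the defining expression.

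For \emph{existence}, I would work in the vector space $X$ of all $G(z)=\sum_{m=0}^M H_m(z)(1-z)^{\omega_m}$ with $\deg H_m\le\rho_m$, which has dimension $\sigma=\sum(\rho_m+1)$: the coefficients of the $H_m$ furnish $\sigma$ free parameters, and these parameters are genuinely independent because the functions $(1-z)^{\omega_m}$ are linearly independent over $\CC(z)$ (no two $\omega_m$ differ by an integer, so no nontrivial $\CC(z)$-combination can vanish identically — this is exactly where the hypothesis on $\vo$ enters). Requiring $\ord_{z=0}(G)\ge\sigma-1$ imposes $\sigma-1$ linear conditions on these $\sigma$ parameters (vanishing of the Taylor coefficients of $G$ in degrees $0,1,\dots,\sigma-2$), so the solution space has dimension at least $1$; any nonzero solution gives the desired $H_0,\dots,H_M$, not all identically zero.

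For \emph{uniqueness}, the key claim is that no nonzero element of $X$ vanishes to order $\ge\sigma$ at $0$; equivalently the map from $X$ to the $\sigma$ Taylor coefficients $(G(0),G'(0),\dots,G^{(\sigma-1)}(0))$ is injective, hence an isomorphism. Granting this, the subspace cut out by the first $\sigma-1$ conditions is exactly one-dimensional, so $G$ is determined up to scalar; the normalization $\lim_{z\to0}G(z)/z^{\sigma-1}=1/(\sigma-1)!$ pins down the scalar, giving a unique $G$ and (again by $\CC(z)$-independence of the $(1-z)^{\omega_m}$) unique $H_m$. To prove the claim I would argue by contradiction: if some nonzero $G\in X$ had $\ord_{z=0}(G)\ge\sigma$, then by repeatedly differentiating the identity $G=\sum H_m(z)(1-z)^{\omega_m}$ and using that $\frac{d}{dz}(1-z)^{\omega_m}=-\omega_m(1-z)^{\omega_m-1}$, one produces a family of $\sigma$ independent linear relations forcing all $H_m\equiv 0$ — concretely, one can set up the generalized Wronskian (or the confluent Vandermonde-type) matrix in the $\omega_m$ and the $H_m$-coefficients and show it is nonsingular, the nonsingularity once more reducing to no two $\omega_m$ differing by an integer. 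The statements that $\deg H_m=\rho_m$ exactly, and that the $H_m$ have no common zero $\alpha$, then follow: a drop in degree of some $H_m$, or a common factor $(z-\alpha)$, would let one exhibit a nonzero element of a \emph{smaller} space $X'$ still vanishing to order $\sigma-1$, contradicting the dimension bookkeeping (for the common-zero statement, dividing through by $(z-\alpha)$ lowers $\sigma$ by $M+1$ but only lowers the order of vanishing by at most... — here one checks $\alpha\ne 1$ since $(1-z)^{\omega_m}$ is not meromorphic at $1$, and $\alpha\ne 0$ since $G(0)\ne 0$ would be violated, then counts).

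For \emph{domain}, each branch $(1-z)^{\omega_m}=\exp(\omega_m\operatorname{Log}(1-z))$ is analytic on $\CC\setminus[1,\infty)$ once we fix the principal branch of $\operatorname{Log}(1-z)$ there, and the $H_m(z)$ are entire; hence $G$, a finite sum of products of such functions, is analytic on $\CC\setminus[1,\infty)$.

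I expect the main obstacle to be the nonsingularity of the Wronskian/confluent-Vandermonde matrix underlying the uniqueness claim: one must carefully organize the $\sigma$ derivative conditions — which mix the $\rho_m+1$ coefficients of each $H_m$ with powers of $(1-z)$ near $z=0$ and the distinct exponents $\omega_m$ — into a block structure whose determinant can be evaluated (or at least shown nonzero) using precisely the hypothesis that $\omega_i-\omega_j\notin\ZZ$. The existence half and the domain half are essentially immediate once that linear-algebraic core is in place.
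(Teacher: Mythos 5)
Your existence argument and your domain argument are exactly the paper's (a homogeneous linear system with $\sigma$ unknowns and $\sigma-1$ equations, and analyticity of each $H_m(z)(1-z)^{\omega_m}$ on $\CC\setminus[1,\infty)$), and you have correctly isolated the crux of part (ii): that no nonzero element of $X$ can vanish at $z=0$ to order $\ge\sigma$. But that crux is precisely what you do not prove. You propose to establish it by showing a generalized Wronskian / confluent Vandermonde matrix in the $\omega_m$ and the coefficients of the $H_m$ is nonsingular, and you yourself flag this nonsingularity as ``the main obstacle''; asserting that it ``reduces to $\omega_i-\omega_j\notin\ZZ$'' is not an argument, and evaluating (or even just bounding away from zero) that $\sigma\times\sigma$ determinant is essentially the whole difficulty of the theorem — it is equivalent to the normality/perfectness of the binomial system. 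The paper avoids this determinant entirely: it first shows (Claim~\ref{Claim:G not 0}) that a nontrivial combination $\sum H_m(z)(1-z)^{\omega_m}$ cannot vanish identically, by comparing growth as $z\to-\infty$ against the unique dominant term $\omega_k+\deg H_k$ (this is where $\omega_i-\omega_j\notin\ZZ$ enters), and then proves the order bound $\ord(G)\le\sigma-1$ (Claim~\ref{Claim:ord(G) not large}) by taking a minimal counterexample in $(M,\rho_0)$, normalizing $\omega_0=0$ via multiplication by $(1-z)^{-\omega_0}$, and differentiating once: if $\rho_0=0$ the constant $H_0$ disappears and $M$ drops, otherwise $\rho_0$ drops, and in either case the derivative would vanish to an order exceeding the bound already known for the smaller parameters — a contradiction. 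Until you supply either that induction or an actual proof of your matrix nonsingularity, the uniqueness, exact-degree, and normalization statements all remain unsupported.

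A second, smaller gap: your argument that the $H_m$ have no common zero $\alpha$ trails off mid-sentence and the bookkeeping is not completed. The intended count is simple once the order bound is available for all parameter vectors: if $H_m(\alpha)=0$ for all $m$, divide everything by $(z-\alpha)$ to get polynomials of degree at most $\rho_m-1$, i.e.\ degree vector $\vr-1$ with $\sigma(\vr-1)=\sigma(\vr)-(M+1)$, while $G(z)/(z-\alpha)$ still vanishes at $z=0$ to order at least $\sigma(\vr)-2\ge\sigma(\vr-1)$ (order $\sigma(\vr)-1$ when $\alpha\ne0$), contradicting the order bound for $\vr-1$; note this requires $M\ge1$ when $\alpha=0$. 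You should also record, as the paper does, that uniqueness of the $H_m$ (not just of $G$) follows from the non-identical-vanishing statement of Claim~\ref{Claim:G not 0} applied to the difference of two representations.
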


Theorem~\ref{thm:consistency} allows us to make the following definition of Padé approximants and remainders.

\begin{definition}\label{def:Pade notation}
Let $\vr$ and $\vo$ be fixed vectors as above. The $M+1$ {\it Padé approximants} $\Hzor$ (with $0\le m \le M$) are the polynomials with degrees $\rho_m$, and with {\it Padé remainder}
    \[\Gzor \coloneqq \sum_{m=0}^M \Hzor \, (1-z)^{\omega_m}\] 
both having a zero of order $\sigma-1$ at $z=0$, and satisfying
    \[\lim_{z\to0} \frac{\Gzor}{z^{\sigma-1}} = \frac{1}{(\sigma-1)!}.\]
\end{definition}

In Proposition~\ref{Claim:symmetry},  we draw attention to some obvious symmetries, immediate from Theorem~\ref{thm:consistency}, whose proofs we do not spell out.

\begin{proposition}[Permutation and Shift Symmetry]\label{Claim:symmetry}
If $\pi$ is any permutation of $0,1,\dots,M$, then
    \[    \Gzor = \sG{z}{\langle \omega_0,\omega_1, \dots, \omega_M \rangle}{\langle \rho_0, \rho_1, \dots, \rho_M \rangle} 
        =  \sG{z}{\langle \omega_{\pi(0)},\omega_{\pi(1)}, \dots, \omega_{\pi(M)}\rangle}{\langle \rho_{\pi(0)}, \rho_{\pi(1)}, \dots, \rho_{\pi(M)} \rangle}
    \]
and 
    \[ \Hzor = \sH{m}{z}{\langle \omega_0,\omega_1, \dots, \omega_M \rangle}{\langle \rho_0, \rho_1, \dots, \rho_M \rangle} 
        =  \sH{\pi^{-1}(m)}{z}{\langle \omega_{\pi(0)},\omega_{\pi(1)}, \dots, \omega_{\pi(M)}\rangle}{\langle \rho_{\pi(0)}, \rho_{\pi(1)}, \dots, \rho_{\pi(M)} \rangle}.
    \]
For any $\alpha$, we have
	\[
	(1-z)^\alpha\Gzor = \sG{z}{\alpha +\vo}{\vr}
	\quad \text{and}\quad
	\Hzor = \sH{m}{z}{\alpha +\vo}{\vr}.
	\]
\end{proposition}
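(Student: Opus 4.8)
The plan is to obtain both symmetries as immediate consequences of the existence-and-uniqueness statement of Theorem~\ref{thm:consistency} together with Definition~\ref{def:Pade notation}. The point is that $\Gzor$ and the approximants $\Hzor$ are pinned down by a short checklist: the data $(\vo,\vr)$ must be admissible (distinct $\omega_m$, no two differing by an integer; nonnegative integers $\rho_m$), each $H_m$ has $\deg H_m\le\rho_m$, the remainder $G$ has $\ord(G)=\sigma-1$, and $\lim_{z\to0}G(z)/z^{\sigma-1}=1/(\sigma-1)!$. So to prove a proposed identity it suffices to verify that the right-hand side is assembled from admissible data and meets this checklist; uniqueness then forces equality.

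\emph{Permutation symmetry.} Fix a permutation $\pi$ of $\{0,\dots,M\}$ and write $\vo'=\langle\omega_{\pi(0)},\dots,\omega_{\pi(M)}\rangle$, $\vr'=\langle\rho_{\pi(0)},\dots,\rho_{\pi(M)}\rangle$. First I would observe that admissibility of $(\vo',\vr')$ is inherited from that of $(\vo,\vr)$ --- distinctness and the no-integer-difference condition depend only on the underlying set $\{\omega_0,\dots,\omega_M\}$ --- and that $\sigma(\vr')=\sigma(\vr)$, since $\sigma$ is a sum over all coordinates. Now propose, for the data $(\vo',\vr')$, the approximants $H'_j:=\sH{\pi(j)}{z}{\vo}{\vr}$ for $0\le j\le M$. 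Their associated remainder is $\sum_{j=0}^M H'_j(z)(1-z)^{\omega_{\pi(j)}}$, which after re-indexing $m=\pi(j)$ is exactly $\Gzor$; the degree bounds $\deg H'_j\le\rho_{\pi(j)}$, the vanishing order $\sigma-1$, and the normalization are all immediate. Uniqueness applied to $(\vo',\vr')$ then gives $\sG{z}{\vo'}{\vr'}=\Gzor$ and $\sH{j}{z}{\vo'}{\vr'}=\sH{\pi(j)}{z}{\vo}{\vr}$; substituting $j=\pi^{-1}(m)$ rewrites the latter as $\Hzor=\sH{\pi^{-1}(m)}{z}{\vo'}{\vr'}$, as claimed.

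\emph{Shift symmetry.} Fix $\alpha\in\CC$ and consider the data $(\alpha+\vo,\vr)$. Adding a fixed constant to every $\omega_m$ preserves distinctness and all pairwise differences, so these data are admissible, and $\sigma$ is unchanged. I would then check that the unchanged polynomials $\Hzor$, together with the function $(1-z)^\alpha\Gzor=\sum_{m=0}^M\Hzor\,(1-z)^{\alpha+\omega_m}$, satisfy the checklist for the parameters $(\alpha+\vo,\vr)$. The degree bounds are untouched. Since $(1-z)^\alpha$ is analytic and nonvanishing in a neighborhood of $z=0$ with value $1$ there, multiplying by it neither creates nor destroys the zero of order $\sigma-1$ at the origin, so $\ord\big((1-z)^\alpha\Gzor\big)=\sigma-1$; and $\lim_{z\to0}(1-z)^\alpha\Gzor/z^{\sigma-1}=1\cdot\lim_{z\to0}\Gzor/z^{\sigma-1}=1/(\sigma-1)!$. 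Uniqueness for $(\alpha+\vo,\vr)$ then yields $\sG{z}{\alpha+\vo}{\vr}=(1-z)^\alpha\Gzor$ and $\sH{m}{z}{\alpha+\vo}{\vr}=\Hzor$.

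I do not expect a genuine obstacle: the whole argument is the pattern ``exhibit a candidate, check the defining checklist, invoke uniqueness.'' The only places that call for a little care are the index bookkeeping in the permutation case (tracking whether the permutation acting on the $H_m$ is $\pi$ or $\pi^{-1}$, which is why the statement reads $\sH{\pi^{-1}(m)}{z}{\vo'}{\vr'}$) and the elementary remark that the near-origin analyticity and the value $(1-z)^\alpha\big|_{z=0}=1$ are precisely what keep the order of vanishing and the leading coefficient intact under the shift.
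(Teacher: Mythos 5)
Your proof is correct and follows exactly the route the paper intends: the paper omits the proof, calling these symmetries immediate from Theorem~\ref{thm:consistency}, and your ``exhibit a candidate, check degrees, vanishing order, and normalization, then invoke uniqueness'' argument is precisely that, with the $\pi^{-1}$ bookkeeping and the shift case handled correctly.
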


The purpose of the current work is to collect together various explicit formulae for the Padé remainder $\Gzor$ and the Padé approximants $\Hzor$, in a common notation, and with complete proofs. Formulae for the Padé remainder are given in Theorem~\ref{thm:G}, and formulae for the Padé approximants are given in Theorem~\ref{thm:H}.

\begin{theorem}[Forms for the Padé Remainder]\label{thm:G}
The following five expressions give $\Gzor$.
\begin{enumerate}[label=(\roman*)]
\item \label{thm:G(i)}
    The Padé remainder $\Gzor$ is given by the iterated integral
	\[\frac{(1-z)^{\omega_0}}{\vr\,!} \int_0^z \int_0^{t_1} \int_0^{t_2} \cdots \int_0^{t_{M-1}}
	\mathcal{G}(z,t_1,t_2,\dots,t_M)\,dt_M\cdots dt_3\,dt_2\,dt_1,\]
where
	\[\mathcal{G}(t_0,t_1,\dots,t_M)
		= t_M^{\rho_M}
			\left(\prod_{h=1}^{M} \left( \frac{t_{h-1}-t_{h}}{1-t_{h}}\right)^{\rho_{h-1}}\right)
			\left(\prod_{h=1}^M (1-t_h)^{\omega_h-\omega_{h-1}-1} \right).\]

\item \label{thm:G(ii)}
    The Padé remainder $\Gzor$ is given by the $M$-dimensional integral
        \[z^{\sigma-1} \frac{(1-z)^{\omega_0}}{\vr\,!} \int_{[0,1]^M}
                          U_M^{-1}\prod_{h=1}^M \frac{U_h^{1+\rho_h}}{(1-zU_h)^{1-\omega_h+\omega_{h-1}}}
                         \left( \frac{1-u_h}{1-z U_h} \right)^{\rho_{h-1}} 
                         \,d\vec u,
        \]
where $U_m = \prod_{h=1}^m u_h$.

\item \label{thm:G(iii)}
    The Padé remainder $\Gzor$ is the contour integral
	\[\frac{(-1)^{\sigma-1}}{2\pi i} \int_\gamma  (1-z)^\xi  \prod_{k=0}^M \frac{1}{\ff{(\xi-\omega_ k)}{\rho_ k+1}} \,d\xi,\]
where $\gamma$ is any simple positively oriented contour enclosing all $\sigma$ of the complex numbers $\omega_m+r$  ($0 \leq m \leq M, 0\leq r \leq 
\rho_m$).

\item \label{thm:G(iv)} The Maclaurin series for $\Gzor$ is
	\[
	\sum_{n=0}^\infty (-1)^n \sum_{m=0}^M \frac{1}{\rho_m!} 
			\sum_{r=0}^{\rho_m} \binom{\rho_m}{r} \,
							\frac{(-1)^r \ff{(\omega_m+r)}{n}}{\prod_{\substack{k=0\\ k\neq m}}^M \rf{(\omega_k-\omega_m-r)}{\rho_k+1}}
			\, \frac{z^n}{n!},
	\]
    which converges for $|z|<1$.

\item \label{thm:G(v)}
    Finally, $\Gzor$ is the special value of Meijer's $G$-function given by
   	\[ \MeijerG{M+1}{0}{M+1}{M+1}{1-z}{\vo+\vr+1}{\vo}.\]
\end{enumerate}
\end{theorem}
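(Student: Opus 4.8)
The plan is to single out two of the five expressions --- the iterated integral (i), together with its companion (ii), and the contour integral (iii) --- and to verify directly that each satisfies the three conditions characterizing $\Gzor$ in Theorem~\ref{thm:consistency}: having the shape $\sum_m H_m(z)(1-z)^{\omega_m}$ with $\deg H_m\le\rho_m$, vanishing to order $\sigma-1$ at $z=0$, and having $(\sigma-1)$st Taylor coefficient equal to $1/(\sigma-1)!$; by the uniqueness statement this forces each to equal $\Gzor$. The remaining forms (iv) and (v) I would then deduce from (iii). Write $G_{(1)},\dots,G_{(5)}$ for the five right-hand sides. The identity $G_{(1)}=G_{(2)}$ comes from the change of variables $t_h=z\,u_1u_2\cdots u_h$ ($1\le h\le M$): for $z\in(0,1)$ it carries the simplex $0\le t_M\le\cdots\le t_1\le z$ onto $[0,1]^M$, has triangular Jacobian, and --- via $t_{h-1}-t_h=zU_{h-1}(1-u_h)$ and $1-t_h=1-zU_h$, $U_h=u_1\cdots u_h$ --- sends each factor of $\mathcal G$ to the matching factor in (ii), the accumulated powers of $z$ producing the front factor $z^{\sigma-1}$; analytic continuation extends the identity to $\CC\setminus[1,\infty)$. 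The order and normalization are then read off from $G_{(2)}$: its integrand is analytic and bounded in $z$ for $|z|<1$ (the $1-zU_h$ never vanish), so $G_{(2)}$ equals $z^{\sigma-1}$ times a function analytic near $0$, and at $z=0$ the integral factors into Euler Beta integrals $\prod_{j=1}^M \mathrm B\big(\sum_{h=j}^M(1+\rho_h),\,\rho_{j-1}+1\big)$ whose Gamma functions telescope to $\vr\,!/(\sigma-1)!$, giving $G_{(2)}/z^{\sigma-1}\to 1/(\sigma-1)!$.

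The shape $\sum_m H_m(z)(1-z)^{\omega_m}$ of $G_{(1)}$ I would get by induction on $M$; the base case $M=0$ is $G_{(1)}=z^{\rho_0}(1-z)^{\omega_0}/\rho_0!$, which visibly satisfies all three conditions. For the inductive step, differentiate $(1-z)^{-\omega_0}G_{(1)}$ exactly $\rho_0+1$ times: the boundary contributions of the outer integral all vanish because the factor $(z-t_1)^{\rho_0}$ is annihilated --- the first $\rho_0$ of these derivatives successively lower its exponent, and the last evaluates the integrand at $t_1=z$ --- leaving
\[ (1-z)^{\omega_0+\rho_0+1}\,\frac{d^{\rho_0+1}}{dz^{\rho_0+1}}\big[(1-z)^{-\omega_0}G_{(1)}\big]=\sG{z}{\vo^{\star0}}{\vr^{\star0}}, \]
where the right side is formula (i) for the shortened vectors, hence $\sG{z}{\vo^{\star0}}{\vr^{\star0}}$ by the induction hypothesis, hence of the form $\sum_{m=1}^M\widetilde H_m(z)(1-z)^{\omega_m}$ with $\deg\widetilde H_m\le\rho_m$ by Theorem~\ref{thm:consistency}. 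Now antidifferentiate $\rho_0+1$ times, using the elementary fact that $\int P(z)(1-z)^{\alpha}\,dz=Q(z)(1-z)^{\alpha+1}+\text{const}$ with $\deg Q\le\deg P$ whenever $\alpha\notin\ZZ_{\ge0}$ (here $\alpha=\omega_m-\omega_0-\rho_0-1\notin\ZZ$): this yields $(1-z)^{-\omega_0}G_{(1)}=\sum_{m=1}^M Q_m(z)(1-z)^{\omega_m-\omega_0}+P_0(z)$ with $\deg Q_m\le\rho_m$ and $\deg P_0\le\rho_0$ from the iterated constants of integration, and multiplying by $(1-z)^{\omega_0}$ gives the desired shape (a degree-$\le\rho_0$ polynomial in $z$ being also one in $1-z$). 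Together with the first paragraph and Theorem~\ref{thm:consistency}, this proves $G_{(1)}=G_{(2)}=\Gzor$.

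For the contour integral, set $R(\xi)=\prod_{k=0}^M 1/\ff{(\xi-\omega_k)}{\rho_k+1}$. Because no two $\omega_k$ differ by an integer, the $\sigma$ poles $\omega_m+r$ of $(1-z)^\xi R(\xi)$ inside $\gamma$ are simple, so $G_{(3)}=(-1)^{\sigma-1}\sum_{m,r}(1-z)^{\omega_m+r}\operatorname{Res}_{\xi=\omega_m+r}R(\xi)$, which already has the required shape. Differentiating under the integral sign, the $n$th Taylor coefficient of $G_{(3)}$ at $0$ is $\frac{(-1)^{\sigma-1+n}}{n!}\cdot\frac1{2\pi i}\int_\gamma\ff{\xi}{n}R(\xi)\,d\xi$; since $\ff{\xi}{n}R(\xi)$ is rational with all finite poles inside $\gamma$ and equals $\xi^{\,n-\sigma}(1+O(\xi^{-1}))$ at infinity, this integral is its coefficient of $\xi^{-1}$ at infinity, hence $0$ for $n<\sigma-1$ and $1$ for $n=\sigma-1$; so $G_{(3)}$ also satisfies the characterization and $G_{(3)}=\Gzor$. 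Form (iv) now follows: the simple residue is $\operatorname{Res}_{\xi=\omega_m+r}R(\xi)=\frac{(-1)^{\rho_m-r}}{\rho_m!}\binom{\rho_m}{r}\prod_{k\ne m}\frac1{\ff{(\omega_m+r-\omega_k)}{\rho_k+1}}$; rewriting $\ff{(\omega_m+r-\omega_k)}{\rho_k+1}=(-1)^{\rho_k+1}\rf{(\omega_k-\omega_m-r)}{\rho_k+1}$ and expanding $(1-z)^{\omega_m+r}=\sum_n(-1)^n\ff{(\omega_m+r)}{n}z^n/n!$, the signs collect to exactly $(-1)^n(-1)^r$ and one obtains the stated series, convergent for $|z|<1$ as a finite sum of binomial series. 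Form (v) follows by recognizing $G_{(3)}$ as a Mellin--Barnes integral: with lower parameters $\vo$ and upper parameters $\vo+\vr+1$, the ratio of Gamma functions in the Meijer $G$-integrand is $\prod_{k=0}^M \Gamma(\omega_k-s)/\Gamma(\omega_k+\rho_k+1-s)=(-1)^\sigma R(s)$, and closing the Barnes contour onto the finite poles $\omega_m+r$ reproduces $G_{(3)}$, the extra $(-1)^\sigma$ being absorbed by the orientation reversal.

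I expect the main obstacle to be the induction of the second paragraph: one must check carefully that every boundary term in the $\rho_0+1$ successive differentiations vanishes, that the various multiplicative constants ($\rho_0!$ from the differentiations, together with $\vr\,!$ and $\vr^{\star0}\,!$) combine to leave coefficient exactly $1$ in the displayed recursion, and that the degree bounds $\deg Q_m\le\rho_m$, $\deg P_0\le\rho_0$ genuinely survive $\rho_0+1$ antidifferentiations --- this is, essentially, the Mahler--Jager argument recast for the binomial specialization. The secondary, error-prone chore is the bookkeeping of signs and of falling- versus rising-factorials in (iv), and of the Barnes-contour orientation in (v): precisely the off-by-one traps the authors mention guarding against by numerical checks.
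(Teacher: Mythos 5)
Your argument is correct --- I checked the delicate spots: the triangular Jacobian and the exponent count producing $z^{\sigma-1}$ in passing from (i) to (ii); the telescoping Beta product at $z=0$, which indeed gives $\vr\,!/(\sigma-1)!$; the antidifferentiation step, which works because $\omega_m-\omega_0\notin\ZZ$ makes the map $Q\mapsto (1-z)Q'-(\alpha+1)Q$ invertible on polynomials of bounded degree; the residue values and sign bookkeeping leading to (iv); and the coefficient-of-$\xi^{-1}$-at-infinity evaluation of the Taylor coefficients of the contour integral --- but it is organized in the reverse direction from the paper's. The paper first shows (Claims~\ref{Claim:Differentials} and~\ref{Claim:G as DE}) that the operators $(1-z)^{\omega+1}\tfrac{d}{dz}(1-z)^{-\omega}$ carry $\Gzor$ to remainders with smaller parameters, so that $\Gzor$ is the unique analytic solution of an explicit differential equation with initial conditions; formula (i) is then obtained by inverting those operators with the Cauchy kernel $\int_0^z\tfrac{(z-t)^{\rho_0}}{\rho_0!}(\cdot)\,dt$ (the degree-$\le\rho_0$ ambiguity killed by the order of vanishing), and (iii) is proved by checking that the contour integral satisfies the same differential equation and initial conditions. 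Because the paper starts from $\Gzor$ and derives the formulas, the shape, order of vanishing, and normalization never need separate verification; you instead start from the explicit formulas and verify the characterization of Theorem~\ref{thm:consistency}, which obliges you to supply two ingredients absent from the paper: the induction-with-antidifferentiation argument that the iterated integral has the form $\sum_m H_m(z)(1-z)^{\omega_m}$ with $\deg H_m\le\rho_m$, and the Beta-integral evaluation of (ii) at $z=0$ that pins down the leading coefficient $1/(\sigma-1)!$ --- a pleasant computation the paper gets for free. Your verification of the initial data for (iii) via the residue at infinity is equivalent to, and a little cleaner than, the paper's $\xi\mapsto N^2/u$ substitution on a large circle; your derivations of (iv) and (v) coincide with the paper's, and your treatment of (v) is exactly as heuristic as the paper's own sketch, which admits to being fast-and-loose with the Barnes contour (you use the $\Gamma(b_j-s)\,z^{s}$ convention while the paper's stated definition uses $\Gamma(s+b_j)\,z^{-s}$; they are reconciled by $s\mapsto-\xi$, as in that sketch). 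In short, your route trades the paper's DE-uniqueness pivot for direct verification against Theorem~\ref{thm:consistency}: it costs the extra shape and normalization work, and buys a self-contained identification of both the order and the leading Taylor coefficient of the iterated integral.
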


In addition to the formulae for $\Gzor$  given in Theorem~\ref{thm:G}, we note that
      \[\Gzor = \sum_{m=0}^M \Hzor \, (1-z)^{\omega_m},\]
and so any formula for $\Hzor$ generates a formula for $\Gzor$.  Theorem~\ref{thm:H}  gives a number of useful representations of $\Hzor$.

\begin{theorem}[Forms for the Padé Approximants]\label{thm:H}
The following five expressions give $\Hzor$.
\begin{enumerate}[label=(\roman*)]
\item \label{thm:H(i)}
    Let $\gamma_m$ be a simple positively oriented contour enclosing all $\rho_m+1$ of the complex numbers $\omega_m+r$ ($0\leq r \leq \rho_m$) and 
none of $\omega_k+r$  ($0 \leq k \leq M, k \neq m, 0\leq r \leq \rho_k$). Then $\Hzor$ is given by
	\[\frac{(-1)^{\sigma-1}}{2\pi i} \int_{\gamma_m} (1-z)^{\xi-\omega_m}  \prod_{k=0}^M \frac{1}{\ff{(\xi-\omega_ k)}{\rho_ k+1}} \,d\xi.\]

\item \label{thm:H(ii)}
    The Padé approximant $\Hzor$ is equal to
	\[\frac{1}{\rho_m!}\,\sum_{r=0}^{\rho_m} (z-1)^r \binom{\rho_m}{r} 
		\prod_{\substack{k=0\\ k\neq m}}^M \frac{1}{\rf{(\omega_k-\omega_m-r)}{\rho_ k+1}} .\]

\item \label{thm:H(iii)}
    For $M\geq1$, the Padé approximant $\Hzor$ is the $M$-fold iterated integral
	\[
	\frac{Q_m}{\vr\,!} \, \int_{(G)} T_m^{-\omega_m-1} \bigg( \prod_{\substack{k=0\\ k\neq m}}^M  t_k^{\omega_k}(1+t_k)^{\rho_k}\bigg)
		\left(1 - (-1)^M \,\frac{1-z}{T_m} \right)^{\rho_m} \,d\vec{t},
	\]
where $\int_{(G)} \cdots d\vec{t}$ integrates each of $t_0,\dots,t_M$ (except $t_m$) counterclockwise on the unit circle from $-\pi$ radians to $\pi$ radians (i.e., the principal value),
	\[
	Q_m 
		\coloneqq  \prod_{\substack{k=0 \\ k \neq m}}^M \frac{1}{2i\sin(\pi(\omega_k-\omega_m))}, \quad \text{and} \quad
	T_m
		\coloneqq  \prod_{\substack{k=0 \\ k \neq m}}^M t_k .
	\]

\item \label{thm:H(iv)}
    The Padé approximant is a scaled generalized hypergeometric function:
	\[
	\frac{1}{\rho_m!} \bigg( \prod_{\substack{k=0 \\ k \neq m}}^M \frac{1}{\rf{(\omega_k-\omega_m)}{\rho_k+1}}\bigg)
	\,\pFq{M+1}{M}{\omega_m-\vo-\vr}{(1+\omega_m - \vo)^{\star m}}{1-z}.
	\]

\item \label{thm:H(v)}
    Set $W\coloneqq W(m,k)=\omega_k-\omega_m$, and define $C_{m,k,r}$ by
    \[C_{m,k,r} \coloneqq  \dbinom{\rho_k}{r},\]
if $m=k$, by
    \[C_{m,k,r} \coloneqq
          (-1)^{\rho_k+1} \dbinom{r}{\rho_k}^{-1} \dfrac{\Gamma(r+1)}{\Gamma(r+1-W)} \dfrac{\Gamma(r-\rho_k-W)}{\Gamma(r-\rho_k+1)}\]
if $m\neq k$ and $\rho_k<r$, and  by
    \[C_{m,k,r} \coloneqq (-1)^r \dbinom{\rho_k}{r} \dfrac{\Gamma(r+1)}{\Gamma(r+1-W)} \dfrac{\Gamma(\rho_k-r+1)}{\Gamma(\rho_k-r+1+W)}\dfrac{\pi}{\sin(\pi W)}\]
if $m\neq k$ and $\rho_k \geq r$.
Then, we have
       \[
       \Hzor = \frac{1}{\vr\,!} \sum_{r=0}^{\rho_m} (z-1)^r \prod_{k=0}^M C_{m,k,r}.
       \]
\end{enumerate}                          
\end{theorem}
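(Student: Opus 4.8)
The plan is to prove Theorem~\ref{thm:H} by establishing the first representation directly from Theorem~\ref{thm:consistency} and Theorem~\ref{thm:G}\ref{thm:G(iii)}, and then deducing the remaining four forms from it by elementary manipulations. First I would observe that Theorem~\ref{thm:G}\ref{thm:G(iii)} writes $\Gzor$ as a contour integral of $(1-z)^\xi \prod_{k=0}^M \ff{(\xi-\omega_k)}{\rho_k+1}^{-1}$ over a contour $\gamma$ enclosing all $\sigma$ poles. Since the $\omega_m$ have no pairwise integer differences, the $M+1$ clusters of poles $\{\omega_m + r : 0 \le r \le \rho_m\}$ are pairwise disjoint, so $\gamma$ may be deformed into a sum of small contours $\gamma_m$, one around each cluster. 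The term coming from $\gamma_m$, after factoring out $(1-z)^{\omega_m}$, is exactly the expression in \ref{thm:H(i)}; the decomposition $\Gzor = \sum_m (1-z)^{\omega_m}[\text{that integral}]$ then matches the defining equation $\Gzor = \sum_m \Hzor (1-z)^{\omega_m}$. To conclude that these integrals really are the Padé approximants, I would note that each is $(1-z)^{-\omega_m}$ times an entire-in-$z$ function which is visibly a polynomial in $(1-z)$ of degree at most $\rho_m$ (expand $(1-z)^{\xi-\omega_m} = (1-z)^{\xi-\omega_m}$ and residue-compute: the residue at $\omega_m + r$ involves derivatives of order $\rho_m - r$ of $(1-z)^{\xi-\omega_m}$, producing powers $(1-z)^r$ with $0\le r\le\rho_m$), hence a polynomial $H_m(z)$ with $\deg H_m \le \rho_m$; by the uniqueness clause of Theorem~\ref{thm:consistency}\ref{thm:consistency, uniqueness}, together with the normalization already built into Theorem~\ref{thm:G}\ref{thm:G(iii)}, these must be the $\Hzor$.

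Next I would derive \ref{thm:H(ii)} from \ref{thm:H(i)} by an explicit residue computation. The integrand has a pole of order $\rho_m+1$ at $\xi = \omega_m$ and simple poles at $\xi = \omega_m + r$ for $1\le r\le \rho_m$ coming from the factor $\ff{(\xi-\omega_m)}{\rho_m+1} = \prod_{j=0}^{\rho_m}(\xi-\omega_m-j)$. It is cleaner to write $\ff{(\xi-\omega_m)}{\rho_m+1}^{-1}$ via partial fractions: $\prod_{j=0}^{\rho_m}(\xi-\omega_m-j)^{-1} = \frac{1}{\rho_m!}\sum_{r=0}^{\rho_m}\binom{\rho_m}{r}\frac{(-1)^{\rho_m-r}}{\xi-\omega_m-r}$. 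Then the sum of residues over $\gamma_m$ picks out, at each simple pole $\xi = \omega_m+r$, the value $(1-z)^r \prod_{k\ne m}\ff{(\omega_m+r-\omega_k)}{\rho_k+1}^{-1}$, and a short rewriting $\ff{(\omega_m+r-\omega_k)}{\rho_k+1} = (-1)^{\rho_k+1}\rf{(\omega_k-\omega_m-r)}{\rho_k+1}$ together with the sign bookkeeping $(-1)^{\sigma-1}$ versus $(-1)^{\rho_m-r}\cdot(-1)^{\sum_{k\ne m}(\rho_k+1)}$ collapses everything to the stated formula $\frac{1}{\rho_m!}\sum_{r=0}^{\rho_m}(z-1)^r\binom{\rho_m}{r}\prod_{k\ne m}\rf{(\omega_k-\omega_m-r)}{\rho_k+1}^{-1}$. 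From \ref{thm:H(ii)}, form \ref{thm:H(iv)} is purely formal: pull out the $r=0$ term $\prod_{k\ne m}\rf{(\omega_k-\omega_m)}{\rho_k+1}^{-1}$ as the leading coefficient, and check that the ratio of the $r$-th term to the $0$-th term is the $r$-th coefficient of a ${}_{M+1}F_M$ — i.e.\ verify $\binom{\rho_m}{r}\frac{(-1)^r}{r!}$ is $\frac{\prod(\text{numerator Pochhammers})_r}{\prod(\text{denominator Pochhammers})_r\, r!}$ with numerator parameters $\omega_m-\omega_k-\rho_k$ (over all $k$, giving $M+1$ of them, one of which is $-\rho_m$) and denominator parameters $1+\omega_m-\omega_k$ for $k\ne m$ — using $\rf{(\omega_k-\omega_m-r)}{\rho_k+1}/\rf{(\omega_k-\omega_m)}{\rho_k+1} = (\omega_m-\omega_k-\rho_k)_r/(1+\omega_m-\omega_k)_r$, a standard Pochhammer shift identity, times matching the $(-1)^r\binom{\rho_m}{r}$ against $(-\rho_m)_r/r!$.

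For \ref{thm:H(iii)} I would take a parallel route to the one presumably used for Theorem~\ref{thm:G}\ref{thm:G(ii)}: start from \ref{thm:H(i)}, and replace each factor $\ff{(\xi-\omega_k)}{\rho_k+1}^{-1}$ (for $k\ne m$) by a Beta-type contour integral — specifically write $\frac{1}{\Gamma(\xi-\omega_k)\,/\,\Gamma(\xi-\omega_k-\rho_k-1)}$ in terms of $\frac{1}{2i\sin(\pi(\omega_k-\omega_m))}\int t_k^{\xi-\omega_k-1}(1+t_k)^{\rho_k}\,dt_k$ around the unit circle, recognizing this as a loop-integral (Hankel/Pochhammer-type) representation of the reciprocal Gamma ratio valid since $\omega_k-\omega_m\notin\ZZ$; then interchange the $\xi$-integral with the $t_k$-integrals, perform the $\xi$-integral (which now only sees the $\xi$-dependence $(1-z)^{\xi-\omega_m}\prod_{k\ne m}t_k^{\xi-\omega_k}\cdot\ff{(\xi-\omega_m)}{\rho_m+1}^{-1}$, a computable residue sum in $\xi$ analogous to the $r=0,\dots,\rho_m$ expansion, producing the factor $T_m^{-\omega_m}(1 - (-1)^M(1-z)/T_m)^{\rho_m}$ up to the extra $T_m^{-1}$ and the $\prod t_k^{\omega_k}$ collected from the substitution), and collect constants into $Q_m$. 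Form \ref{thm:H(v)} I would get simply by expanding \ref{thm:H(ii)} and rewriting each Pochhammer factor $\rf{(\omega_k-\omega_m-r)}{\rho_k+1}^{-1}$ as a ratio of Gamma functions, then splitting into the three regimes $k=m$, ($k\ne m$, $r>\rho_k$), ($k\ne m$, $r\le\rho_k$): in the last two regimes one of the Gamma arguments in the denominator of $\rf{\cdot}{\cdot}^{-1}$ can be non-positive, so one must apply the reflection formula $\Gamma(x)\Gamma(1-x)=\pi/\sin(\pi x)$ to move the pole/zero around, which is exactly what produces the $\pi/\sin(\pi W)$ factor in the $r\le\rho_k$ case and the binomial-with-Gamma-ratio shape in the $r>\rho_k$ case; the signs $(-1)^r$ and $(-1)^{\rho_k+1}$ come from tracking $\Gamma$ at negative integers via the reflection formula, and it is a finite unwinding of $\rf{(\omega_k-\omega_m-r)}{\rho_k+1}=\Gamma(\omega_k-\omega_m-r+\rho_k+1)/\Gamma(\omega_k-\omega_m-r)$.

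The main obstacle I anticipate is \ref{thm:H(iii)}: getting the Pochhammer/Hankel contour representation of the reciprocal Gamma ratios exactly right (orientation, the precise constant $1/(2i\sin\pi(\omega_k-\omega_m))$, and the branch of $t_k^{\xi-\omega_k}$ on the cut circle), and then justifying the interchange of the $\xi$-contour with the several $t_k$-contours and re-summing the $\xi$-residues to land on the clean closed form with the $(-1)^M$ sign and the single extra $T_m^{-1}$. By contrast, \ref{thm:H(i)} $\Rightarrow$ \ref{thm:H(ii)} $\Rightarrow$ \ref{thm:H(iv)} and \ref{thm:H(ii)} $\Rightarrow$ \ref{thm:H(v)} are essentially bookkeeping with residues, Pochhammer shifts, and the reflection formula, with the only real risk being sign and off-by-one errors — which, as the authors note, can be cross-checked numerically.
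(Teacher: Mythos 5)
Your skeleton agrees with the paper's for four of the five parts: the paper also obtains (i) from the contour representation of $\Gzor$ in Theorem~\ref{thm:G}\ref{thm:G(iii)} by splitting $\gamma$ into residues at the $\sigma$ poles and invoking the uniqueness statement of Theorem~\ref{thm:consistency}\ref{thm:consistency, uniqueness} (Claim~\ref{Claim:H as a contour}); it then gets (ii) by evaluating those residues (Claim~\ref{Claim:H explicit}), (v) from (ii) by Gamma-function rewriting plus the reflection formula (Claim~\ref{Claim:H as gamma ratios}), and (iii) from (ii) via precisely your identity $\PV\int_{|t|=1}t^{x-1}(1+t)^{\rho}\,dt = 2i\sin(\pi x)\,\rho!/\rf{x}{\rho+1}$ (Claim~\ref{Claim:H as iterated integral}). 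For (iv) you genuinely depart from the paper, which goes through the Meijer $G$-function form of $\Gzor$ (Theorem~\ref{thm:G}\ref{thm:G(v)}) together with Slater's theorem; your derivation of (iv) directly from (ii) is correct --- the ratio identity $\rf{(\omega_k-\omega_m-r)}{\rho_k+1}/\rf{(\omega_k-\omega_m)}{\rho_k+1}=\rf{(\omega_m-\omega_k-\rho_k)}{r}/\rf{(1+\omega_m-\omega_k)}{r}$ and $\rf{(-\rho_m)}{r}/r!=(-1)^r\binom{\rho_m}{r}$ do reassemble the ${}_{M+1}F_{M}$ term by term --- and it is arguably simpler, since it avoids Slater's expansion and the contour issues left vague in Claim~\ref{Claim:G as Meijer}.

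Two repairs are needed before this is a proof. First, your description of the pole structure is wrong: $\ff{(\xi-\omega_m)}{\rho_m+1}=\prod_{j=0}^{\rho_m}(\xi-\omega_m-j)$ has $\rho_m+1$ distinct simple zeros, so the integrand in (i) has only simple poles; there is no pole of order $\rho_m+1$ at $\xi=\omega_m$, and residues never involve $\xi$-derivatives of $(1-z)^{\xi-\omega_m}$ (such derivatives would produce powers of $\log(1-z)$, not of $1-z$, so the parenthetical argument you give in part (i) for ``polynomial of degree at most $\rho_m$'' would fail as stated). The partial-fraction computation you actually carry out for (ii) is the correct simple-pole one, and your sign bookkeeping there checks out, so the formulas survive once the justification is fixed: all $\sigma$ poles are simple because no two $\omega_i$ differ by an integer. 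Second, in your plan for (iii), inserting $\frac{1}{2i\sin(\pi(\omega_k-\omega_m))}\int_{|t_k|=1}t_k^{\xi-\omega_k-1}(1+t_k)^{\rho_k}\,dt_k$ into the $\xi$-integrand of (i) is not an identity in $\xi$: the correct normalizing factor is $1/\bigl(2i\sin(\pi(\xi-\omega_k))\bigr)$, which depends on $\xi$. The clean repair is the paper's ordering: substitute only after the $\xi$-residues have been taken, i.e.\ into the finite sum (ii) at $\xi=\omega_m+r$, where $\sin(\pi(\omega_k-\omega_m-r))=(-1)^r\sin(\pi(\omega_k-\omega_m))$; those $(-1)^r$ factors, one for each $k\neq m$, produce $(-1)^{rM}$ and hence the $(-1)^M$ inside the binomial, and the interchange is then a finite sum against $M$ proper principal-value integrals, requiring no further justification.
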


Theorem~\ref{thm:perfect} precisely states that notion that the approximants for nearby $\vr$ are independent. This property is referred to as  ``perfect approximation'', and relies mostly on $\deg(\sH{m}{z}{\vo}{\vr}) = \rho_m$ and $\ord(\Gzor)=\sigma-1$. Recall that our $M+1$ dimensional vectors have coordinates indexed from 0 through $M$.

\begin{theorem}[Approximants are Perfect]\label{thm:perfect}
Fix $\vr \in \NN^{M+1}$ and $\vec\epsilon_0, \vec\epsilon_1, \dots, \vec\epsilon_M \in \ZZ^{M+1}$ with each $\vr+\vec\epsilon_k$ having nonnegative coordinates, and denote the $j$-th coordinate of $\vec\epsilon_i$ as $\vec\epsilon_{i,j}$. Let $S$ be maximum of $\sum_{i=0}^M \vec\epsilon_{i,\beta(i)}$ taken over all permutations $\beta$ of $0,1, \dots, M$, and let $T$ be the minimum of $\sum_{j=0}^M \vec\epsilon_{i,j}$ taken over $0\leq i \leq M$. Suppose the following two conditions are satisfied:
\begin{enumerate}[label=(\roman*)]
  \item There is a unique permutation $\alpha$ of $0,1,\dots,M$ with $S=\sum_{i=0}^M \vec\epsilon_{i,\alpha(i)}$;
  \item $T+M = S$.
\end{enumerate}
Then the $(M+1) \times (M+1)$ matrix whose $(k,m)$ coordinate is the polynomial $\sH{m}{z}{\vo}{\vr+\vec\epsilon_k}$ has determinant
  \[
  C z^{\sigma(\vr)+T-1},
  \]
where $C$ does not depend on $z$.
\end{theorem}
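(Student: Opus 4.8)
The plan is to compute the determinant by exploiting the two structural facts that make the approximants "perfect": $\deg_z \sH{m}{z}{\vo}{\vr} = \rho_m$ (so column $m$ has controlled degree) and $\ord_{z=0} \Gzor = \sigma(\vr)-1$ (which ties the low-order behaviour of the rows together through the relation $\Gzor = \sum_m \Hzor (1-z)^{\omega_m}$). The key observation is that for each $k$, the row vector $\big(\sH{0}{z}{\vo}{\vr+\vec\epsilon_k}, \dots, \sH{M}{z}{\vo}{\vr+\vec\epsilon_k}\big)$, when dotted against the (linearly independent over $\CC(z)$) functions $(1-z)^{\omega_0}, \dots, (1-z)^{\omega_M}$, produces $\sG{z}{\vo}{\vr+\vec\epsilon_k}$, a function vanishing to order $\sigma(\vr+\vec\epsilon_k)-1 = \sigma(\vr) + (\sum_j \vec\epsilon_{k,j}) - 1 \ge \sigma(\vr)+T-1$ at $z=0$. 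So the matrix $\mathbf{H} = \big(\sH{m}{z}{\vo}{\vr+\vec\epsilon_k}\big)_{k,m}$ satisfies $\mathbf{H} \cdot \mathbf{v}(z) \equiv 0 \pmod{z^{\sigma(\vr)+T-1}}$ where $\mathbf{v}(z)$ is the column vector of binomials; hence (working in the local ring $\CC[[z]]$, in which the $(1-z)^{\omega_m}$ are units) $\det \mathbf{H}$ is divisible by $z^{\sigma(\vr)+T-1}$. This gives the lower bound on the order of vanishing.

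Next I would pin down the degree. Column $m$ of $\mathbf{H}$ consists of polynomials $\sH{m}{z}{\vo}{\vr+\vec\epsilon_k}$ of degree $\rho_m + \vec\epsilon_{k,m}$. By the Leibniz expansion, $\deg_z \det \mathbf{H} \le \max_\beta \sum_{i} (\rho_{\beta(i)} + \vec\epsilon_{i,\beta(i)}) = \big(\sum_m \rho_m\big) + S = \sigma(\vr) - (M+1) + S$. Using hypothesis (ii), $S = T + M$, so this upper bound equals $\sigma(\vr) + T - 1$, exactly matching the lower bound on the order of the zero at $0$. A nonzero polynomial whose order of vanishing at $0$ is at least its degree must be a constant times $z^{\sigma(\vr)+T-1}$; so it remains only to show $\det \mathbf{H} \not\equiv 0$, equivalently that the top-degree coefficient (which by the degree count comes only from permutations $\beta$ achieving $S$, and by hypothesis (i) from the single permutation $\alpha$) is nonzero. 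That coefficient is $\pm \prod_i [\text{leading coeff of } \sH{\alpha(i)}{z}{\vo}{\vr+\vec\epsilon_i}]$, and each leading coefficient is nonzero because Theorem~\ref{thm:consistency}\ref{thm:consistency, uniqueness} guarantees $\deg \Hzor = \rho_m$ exactly.

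For the explicit value of the leading coefficient one reads it off, say, from the hypergeometric form Theorem~\ref{thm:H}\ref{thm:H(iv)} or the series Theorem~\ref{thm:H}\ref{thm:H(ii)}: the coefficient of $(z-1)^{\rho_m}$ in $\sH{m}{z}{\vo}{\vr}$ is $\frac{1}{\rho_m!}\prod_{k\neq m}\big(\rf{(\omega_k-\omega_m-\rho_m)}{\rho_k+1}\big)^{-1}$, and substituting the shifted parameters $\vr + \vec\epsilon_i$ gives a closed form for $C$ (a product over $i$ of such terms, times the sign of $\alpha$); I would state this but not belabor the bookkeeping. The only genuine subtlety — and the step I expect to be the main obstacle — is verifying that hypotheses (i) and (ii) together force the top-degree part of the Leibniz sum to be a \emph{single} monomial term with no cancellation: (ii) ensures the degree bound is attained only at permutations realizing $S$, and (i) ensures there is exactly one such permutation, so there is nothing to cancel. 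Care is needed because a priori several $\beta$'s could tie for the maximum of $\sum_i \vec\epsilon_{i,\beta(i)}$ while others tie for a sub-optimal $\beta$ still reaching the degree bound through a different distribution of the $\rho$'s; one must check that the $\rho$-part $\sum_m \rho_m$ is permutation-independent (it is, being a full sum) so that the degree of the $\beta$-term is $\sum_m \rho_m + \sum_i \vec\epsilon_{i,\beta(i)}$ and the maximization genuinely isolates $\alpha$. Once that combinatorial point is clean, the rest is the divisibility argument above plus the nonvanishing of the single surviving leading term.
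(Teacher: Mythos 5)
Your argument is correct and is essentially the paper's own proof: the Leibniz expansion plus the exact degrees $\deg \sH{m}{z}{\vo}{\vr+\vec\epsilon_k} = \rho_m+\vec\epsilon_{k,m}$ give degree exactly $\sigma(\vr)-M-1+S$ attained only by the unique permutation $\alpha$, while the vanishing of $\sG{z}{\vo}{\vr+\vec\epsilon_k}$ to order at least $\sigma(\vr)+T-1$ forces $z^{\sigma(\vr)+T-1}\mid\det\mathbf{H}$, and hypothesis (ii) matches the two bounds. Your local-ring/unit phrasing of the divisibility step is just a repackaging of the paper's multiplication by $\adj(\mathbf{H})$, and the combinatorial point you flag (the $\rho$-part of each term's degree being permutation-independent) is handled the same way in the paper.
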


The most startling aspect of Theorem~\ref{thm:perfect} is that $\vo$ plays no role in the hypotheses nor in the conclusion.

We note that (in Theorem~\ref{thm:perfect}) with $\vec\epsilon_k = \ve_k$ one has $T=1, S=M+1$, and the conditions in Theorem~\ref{thm:perfect} are satisfied. This recovers a result stated and used by Mahler, Chudnovsky and Bennett~\cites{Bennett,Chudnovsky,Mahler}. If one takes $I_k \subseteq \{ 0, 1, \dots, k-1\}$ and $\vec\epsilon_k = \ve_k + \sum_{i\in I_k} \ve_i$, one recovers a result of Jager~\cite{Jager}. Our result covers many more examples than we found in the literature, but it is not exhaustive. 

\section{More Notation}
We denote the rising and falling factorials as
	\begin{align*}
	          \rf{x}{r} &\coloneqq  x\cdot \rf{(x+1)}{r-1}=x\cdot (x+1)\cdot (x+2) \cdots (x+r-1),\\
	          \ff{x}{r} &\coloneqq  x\cdot \ff{(x-1)}{r-1}=x\cdot(x-1)\cdot (x-2) \cdots (x-r+1),
          \end{align*}
for positive integers $r$, and define $\rf{x}{0}=\ff{x}{0}=1$.
We use the following trivial identities without comment (provided $x-r+1 \not\in\{0,-1,-2,\dots\}$):
	\[\ff{x}{r} = \frac{\Gamma(x+1)}{\Gamma(x-r+1)}, \quad
	\ff{x}{r} = \rf{(x-r+1)}{r} = (-1)^r \rf{(-x)}{r}, \]
and typically choose to eliminate ratios of $\Gamma$ functions in preference for the more computationally friendly rising and falling factorials.
All of our functions will be analytic in a complex neighborhood of $z=0$. We use $\deg(f(z))$ to be the degree of $f$, which is $\infty$ if $f$ is not a 
polynomial. We use $\ord(f(z))$ to denote the order of the zero of $f$ at $z=0$, and we use $O(z^k)$ to denote a function that has a zero at $z=0$ of order at 
least $k$.

We shall briefly encounter 
the generalized hypergeometric function (defined for $|z|<1$, $q<p$, and appropriate integers $a_i,b_i$)
	\[
	\pFq{p}{q}{a_1, a_2, \dots, a_p}{b_1,b_2,\dots,b_q}{z} 
		= \sum_{n=0}^\infty \frac{\rf{a_1}{n} \rf{a_2}{n} \cdots \rf{a_p}{n}}{\rf{b_1}{n} \rf{b_2}{n} \cdots \rf{b_q}{n}} \, \frac{z^n}{n!},
	\]
and also the Meijer $G$-function~\cite{NISTonline} 
    $$\MeijerG{m}{n}{p}{q}{z}{a_1,a_2, \dots, a_p}{b_1,b_2,\dots,b_q}$$ 
(defined for natural numbers $m,n,p,q$, provided $m\leq q$ and $n\leq p$, although we only encounter it in this work with $n=0, m=p=q=M+1$), defined by
	\[
		\frac{1}{2\pi i} \int_C \frac{\prod_{k=1}^m \Gamma(s+b_k) \prod_{k=1}^n \Gamma(1-a_k-s)}
            					    {\prod_{k=n+1}^p \Gamma(s+a_k) \prod_{k=m+1}^q \Gamma(1-b_k-s)} \, z^{-s}\,ds,
	\]
where $C$ is an infinite contour that separates the poles of $\Gamma(1-a_k-s)$ from those of $\Gamma(b_k+s)$; the particular contour required for 
convergence varies depending on $m,n,p,q,z$.

\section{Claims and Proofs}
It is at least plausible that there are polynomials $H_0,\dots,H_{M}$ with degrees $\rho_0,\dots,\rho_M$ and
	\begin{equation}
	G(z)\coloneqq \sum_{m=0}^M H_m(z) (1-z)^{\omega_m} = \frac{z^{\sigma-1}}{(\sigma-1)!} +O(z^{\sigma}),
	\end{equation}
where $O(z^\sigma)$ refers to $z\to0$. After all, the polynomials have a total of $\sigma$ coefficients, and we may choose them so that $G(z)$ has a zero at $z=0$ of order $\sigma-1$, and the first nonzero coefficient in the power series expansion of $G(z)$ is according to our choosing. Establishing this rigorously is the point to our first claims.

In all the Claims in this section, we assume that $M$ is a nonnegative integer, and that $0\leq m \leq M$. We assume that $\vr=\langle \rho_0,\dots,\rho_M\rangle $ is vector of $M+1$ nonnegative integers, and that $\vo=\langle \omega_0,\dots,\omega_M\rangle $ is a vector of $M+1$ distinct complex numbers, no two of which have a difference that is an integer. Both $\vr$ and $\vo$ (and vectors derived from them) are indexed 0 through $M$.

\subsection{Existence and Uniqueness}

The following claim is used implicitly frequently throughout this work.
\begin{claim}\label{Claim:G not 0}
For any polynomials $H_m(z)$ (not all zero), the sum
	\[G(z) \coloneqq  \sum_{m=0}^M H_m(z) (1-z)^{\omega_m}\]
is not identically 0.
\end{claim}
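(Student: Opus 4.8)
The plan is to argue by contradiction: suppose $G(z) = \sum_{m=0}^M H_m(z)(1-z)^{\omega_m} \equiv 0$ with the $H_m$ not all identically zero, and derive a contradiction from the algebraic independence of the functions $(1-z)^{\omega_m}$ over $\CC(z)$. The key point is that the functions $(1-z)^{\omega_0}, \dots, (1-z)^{\omega_M}$ are linearly independent over the field of rational functions $\CC(z)$ precisely because no two of the $\omega_m$ differ by an integer; a vanishing relation with polynomial (indeed rational) coefficients would violate this.

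Concretely, I would induct on $M$. The base case $M=0$ is immediate: $H_0(z)(1-z)^{\omega_0} \equiv 0$ with $(1-z)^{\omega_0}$ not identically zero forces $H_0 \equiv 0$. For the inductive step, assume the claim for $M-1$ and suppose $\sum_{m=0}^M H_m(z)(1-z)^{\omega_m} \equiv 0$. Divide through by $(1-z)^{\omega_M}$ to get $\sum_{m=0}^{M-1} H_m(z)(1-z)^{\omega_m - \omega_M} + H_M(z) \equiv 0$ on $\CC \setminus [1,\infty)$. Now apply the differential operator $D \coloneqq (1-z)\frac{d}{dz}$, which satisfies $D\big((1-z)^\alpha\big) = -\alpha(1-z)^\alpha$ and sends polynomials of degree $\le d$ to polynomials of degree $\le d$. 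Iterating $D$ enough times — specifically, applying the operator $\prod_{j=0}^{\rho_M}\big(D + j \cdot \mathrm{id}\big)$, or more simply noting that $(D)$ applied $\rho_M+1$ times annihilates any polynomial of degree $\le \rho_M$ — kills the term $H_M(z)$ while transforming each remaining term $H_m(z)(1-z)^{\omega_m-\omega_M}$ into $\widetilde H_m(z)(1-z)^{\omega_m-\omega_M}$ for a new polynomial $\widetilde H_m$ of degree at most $\rho_m$. A short computation shows $\widetilde H_m(z) = \big(\prod_{j=0}^{\rho_M}(D - (\omega_m-\omega_M) + j)\big) H_m(z)$ up to accounting; the crucial fact is that the leading coefficient of $\widetilde H_m$ is that of $H_m$ times $\prod_{j=0}^{\rho_M}\big(j - (\omega_m - \omega_M) - \rho_m\big)$, and since $\omega_m - \omega_M$ is not an integer none of these factors vanishes, so $\deg \widetilde H_m = \deg H_m$ and in particular $\widetilde H_m \not\equiv 0$ whenever $H_m \not\equiv 0$. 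Multiplying back by $(1-z)^{\omega_M}$, the induction hypothesis applied to $\sum_{m=0}^{M-1}\widetilde H_m(z)(1-z)^{\omega_m} \equiv 0$ forces every $\widetilde H_m \equiv 0$, hence every $H_m \equiv 0$ for $m < M$, and then $H_M \equiv 0$ as well — contradiction.

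The main obstacle is bookkeeping in the inductive step: one must verify carefully that the differential operator both (a) genuinely annihilates the degree-$\rho_M$ polynomial $H_M$ after the $(1-z)^{-\omega_M}$ normalization — here it is cleanest to use that $\big((1-z)\tfrac{d}{dz}\big)^{\rho_M+1}$ kills $\CC_{\le \rho_M}[z]$ only after one first multiplies by a suitable power of $(1-z)$ to make $H_M$ literally a polynomial in a variable on which $D$ acts as a lowering operator, so I would instead use the operator $\big(\tfrac{d}{dz}\big)^{\rho_M+1}$ after writing everything over $(1-z)^{\omega_M - \rho_M}$ or simply differentiate $\rho_M+1$ times in $z$ directly — and (b) preserves the degree of each surviving $H_m$, which is exactly where the no-integer-difference hypothesis is used. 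An alternative, perhaps slicker, route avoids induction entirely: near $z=1$, the function $(1-z)^{\omega_m}$ has a branch-point behavior with exponent $\omega_m \pmod 1$, and since these residues mod $1$ are distinct, comparing the leading singular terms in a Puiseux-type expansion at $z=1$ of $\sum H_m(z)(1-z)^{\omega_m} \equiv 0$ forces each $H_m$ to vanish to infinite order at $z=1$, hence to be identically zero. I would present the inductive differential-operator argument as the primary proof since it is elementary and self-contained, mentioning the monodromy argument only as a remark.
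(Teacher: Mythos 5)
Your argument is correct in the form you finally settle on (differentiate $\rho_M+1$ times after dividing by $(1-z)^{\omega_M}$), but it is a genuinely different route from the paper's. The paper argues by growth at infinity: since no two $\omega_i$ differ by an integer, there is a unique index $k$ maximizing $\omega_k+\deg H_k$ among the nonzero $H_i$, and letting $z\to-\infty$ one sees $G(z)/\bigl(H_k(z)(1-z)^{\omega_k}\bigr)\to 1$, so $G\not\equiv 0$ -- a three-line "dominant term" argument with no induction. Your induction with a differential operator is closer in spirit to the machinery the paper deploys later (Claims on order bounds and the operators $d_\omega$), and it buys a purely algebraic, self-contained proof whose key step -- the leading coefficient of $H_m'(z)(1-z)-(\omega_m-\omega_M)H_m(z)$ is $-(\deg H_m+\omega_m-\omega_M)$ times that of $H_m$, nonzero because $\omega_m-\omega_M\notin\ZZ$ -- isolates exactly where the no-integer-difference hypothesis enters; the paper's proof is shorter but leans on an asymptotic comparison (and, for genuinely complex $\omega_i$, on reading ``max'' as maximal real part). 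One caution: your first formulation, that iterating $D=(1-z)\tfrac{d}{dz}$ a total of $\rho_M+1$ times annihilates polynomials of degree at most $\rho_M$, is false ($D$ does not lower degree; it kills only constants), and likewise $\prod_{j=0}^{\rho_M}(D+j)$ is not the right annihilator since $D$ is not diagonal on monomials -- but you flag this yourself and the plain $\bigl(\tfrac{d}{dz}\bigr)^{\rho_M+1}$ version you substitute is sound, as is the alternative Puiseux-at-$z=1$ remark, which is in fact the closest relative of the paper's own proof (compare leading singular exponent at $1$ versus dominant growth exponent at $-\infty$).
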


\begin{proof}
Since no two $\omega_i$ have difference that is an integer, there is a unique $k$ with
	\[\omega_k + \deg(H_k) = \max\{ \omega_i + \deg(H_i) : H_i \not =0 \}. \]
Then
	\[\lim_{z \to - \infty} \frac{G(z)}{H_k(z)(1-z)^{\omega_k}} 
		= 1 + \sum_{\substack{m=0 \\ m\neq  k}}^M \lim_{z\to -\infty} \frac{H_m(z)(1-z)^{\omega_m}}{H_k(z)(1-z)^{\omega_k}} = 1.\]
Consequently, $G$ cannot be identically 0.
\end{proof}

\begin{claim}\label{Claim:ord(G) not small}
There are polynomials $H_0(z),\dots,H_M(z)$ of degrees at most $\rho_0,\dots,\rho_M$, respectively, not all identically 0, such that
	\[\ord\left(\sum_{m=0}^M H_m( z ) (1-z)^{\omega_m} \right) \geq \sigma-1. \]
\end{claim}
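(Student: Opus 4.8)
The plan is to count unknowns and equations. The collection of polynomials $H_0,\dots,H_M$ with $\deg H_m\le\rho_m$ is parametrized by exactly $\sigma=\sum_{m=0}^M(\rho_m+1)$ complex coefficients, which I regard as the coordinates of a vector in $\CC^\sigma$. For such a tuple, form $G(z)=\sum_{m=0}^M H_m(z)(1-z)^{\omega_m}$; since each $(1-z)^{\omega_m}$ is analytic in a neighborhood of $z=0$, $G$ has a Maclaurin expansion $G(z)=\sum_{n\ge 0} c_n z^n$, and each Taylor coefficient $c_n$ is a $\CC$-linear function of the $\sigma$ coefficients of the $H_m$'s. The condition $\ord(G)\ge\sigma-1$ is precisely the system of $\sigma-1$ homogeneous linear equations $c_0=c_1=\dots=c_{\sigma-2}=0$ in these $\sigma$ unknowns.

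The key step is then purely linear algebra: a homogeneous linear system with more unknowns than equations (here $\sigma>\sigma-1$) has a nonzero solution. Picking such a solution gives polynomials $H_0,\dots,H_M$, not all identically zero, with $\deg H_m\le\rho_m$ and $\ord(G)\ge\sigma-1$, which is exactly the assertion. I should remark explicitly that ``not all identically $0$'' is automatic because the solution vector in $\CC^\sigma$ is nonzero, and the correspondence between that vector and the tuple $(H_0,\dots,H_M)$ is a bijection (the zero tuple corresponds only to the zero vector).

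There is essentially no obstacle here; the only thing to be careful about is making the linear-algebra bookkeeping precise, namely that each $c_n$ really is a linear form in the coefficients of the $H_m$ (immediate from the product rule / Cauchy product for the analytic functions $(1-z)^{\omega_m}$) and that the parameter count is $\sigma$ rather than $\sigma-1$ or $\sigma+1$ (this is just the definition of $\sigma$). One could alternatively phrase the argument as: the subspace $X_{\sigma-1}$ in the notation of the introduction is the kernel of the linear map $X\to\CC^{\sigma-1}$ sending $G\mapsto(c_0,\dots,c_{\sigma-2})$, and $\dim X=\sigma>\sigma-1$ forces $\dim X_{\sigma-1}\ge 1$. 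Either formulation occupies only a few lines and completes the proof; the real content (that the order is \emph{exactly} $\sigma-1$, and uniqueness of the normalized solution) is deferred to the subsequent claims underpinning Theorem~\ref{thm:consistency}.
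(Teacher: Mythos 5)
Your proposal is correct and is essentially the paper's own argument: the paper likewise treats the $\sigma$ coefficients of the $H_m$ as unknowns, uses Newton's binomial expansion of $(1-z)^{\omega_m}$ to see that vanishing of the coefficients of $z^j$ for $0\le j\le\sigma-2$ gives $\sigma-1$ homogeneous linear equations, and concludes by linear algebra that a nonzero solution exists. No substantive difference to report.
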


\begin{proof}
Consider polynomials $H_0(z),\dots,H_M(z)$ of degrees $\rho_0,\dots,\rho_M$ with unknown coefficients, a total of $\sigma$ unknowns. 
Recall Newton's Binomial Theorem: for $|z|<1$ and any complex $\omega$, we have
	\[ (1-z)^\omega = \sum_{i=0}^\infty (-1)^i \frac{\ff{\omega}{i}}{i!}\, z^i.\]
Considering the coefficient of $z^j$, for $0\leq j \leq \sigma-2$, on both sides of the desired equality
	\[
	\sum_{m=0}^M H_m(z) \sum_{i\geq 0} (-1)^i \frac{\ff{{\omega_m}}{i}}{i!}\, z^i = O(z^{\sigma-1})
	\]
yields a homogeneous linear equation in the unknowns, a total of $\sigma-1$ equations. By linear algebra, there is a choice of the $\sigma$ unknowns, not all zero, which 
satisfies all of the equations. In other words, there are polynomials $H_0(z),\dots,H_M(z)$ (not all zero) with degrees at most $\rho_0,\dots,\rho_M$, such 
that
	\[\sum_{m=0}^M H_m(z) (1-z)^{\omega_m}\]
has a zero of order at least $\sigma-1$ at $z=0$. 
\end{proof}

Claim~\ref{Claim:ord(G) not small} establishes Theorem~\ref{thm:consistency}\ref{thm:consistency, existence}.

The next claim is slightly stronger than the $M=0$ case of Theorem~\ref{thm:consistency}, in that explicit formulae are given, and is used as a base case for subsequent induction arguments.
\begin{claim} \label{Claim:M=0}
The $M=0$ Padé approximant and remainder are given by the formulae 
\(\displaystyle \sH{0}{z}{\langle \omega_0 \rangle}{\langle \rho_0 \rangle} = \frac{z^{\rho_0}}{\rho_0!}\), and 
\(\displaystyle \sG{z}{\langle \omega_0 \rangle}{\langle \rho_0 \rangle} = \frac{z^{\rho_0}}{\rho_0!} (1-z)^{\omega_0}.\)
\end{claim}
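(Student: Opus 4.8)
The plan is to verify directly that the proposed formulae satisfy all the defining properties laid out in Definition~\ref{def:Pade notation} for the case $M=0$, and then invoke the uniqueness part of Theorem~\ref{thm:consistency} to conclude that they are \emph{the} Padé approximant and remainder. When $M=0$ we have a single complex number $\omega_0$ (with no integer-difference constraint to worry about) and a single nonnegative integer $\rho_0$, so that $\sigma = \rho_0 + 1$. The claimed approximant is $H_0(z) = z^{\rho_0}/\rho_0!$, which is indeed a polynomial of degree exactly $\rho_0$, and the claimed remainder is $G(z) = H_0(z)(1-z)^{\omega_0} = \big(z^{\rho_0}/\rho_0!\big)(1-z)^{\omega_0}$, so the relation $G(z) = \sum_{m=0}^M H_m(z)(1-z)^{\omega_m}$ holds trivially.

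First I would check the order of vanishing: since $(1-z)^{\omega_0} = 1 + O(z)$ near $z=0$ (by Newton's Binomial Theorem, as already recalled in the proof of Claim~\ref{Claim:ord(G) not small}), we have $G(z) = z^{\rho_0}/\rho_0! + O(z^{\rho_0+1})$, so $\ord(G) = \rho_0 = \sigma - 1$ exactly. Next I would check the normalization:
\[
\lim_{z\to 0} \frac{G(z)}{z^{\sigma-1}} = \lim_{z\to 0} \frac{z^{\rho_0}(1-z)^{\omega_0}/\rho_0!}{z^{\rho_0}} = \frac{1}{\rho_0!} = \frac{1}{(\sigma-1)!}.
\]
Thus the pair $\big(H_0(z), G(z)\big)$ satisfies every condition in Definition~\ref{def:Pade notation}: $H_0$ has degree $\rho_0$, the sum $G$ has a zero of order $\sigma-1$ at the origin, and the limit normalization is met.

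Finally, by Theorem~\ref{thm:consistency}\ref{thm:consistency, uniqueness}, the polynomials $H_m$ and the remainder $G$ satisfying these constraints are uniquely determined; hence $\sH{0}{z}{\langle\omega_0\rangle}{\langle\rho_0\rangle} = z^{\rho_0}/\rho_0!$ and $\sG{z}{\langle\omega_0\rangle}{\langle\rho_0\rangle} = \big(z^{\rho_0}/\rho_0!\big)(1-z)^{\omega_0}$, as claimed. I anticipate no real obstacle here: the only thing to be careful about is confirming that the existence/uniqueness theorem genuinely applies with $M=0$ (it does, since the hypotheses on $\vo$ and $\vr$ are vacuous or immediate for a single coordinate), and that the normalization constant is pinned down correctly — a one-line limit computation. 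This is precisely the sort of base case that later induction arguments will lean on.
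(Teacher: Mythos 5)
Your verification is correct, but it takes a different route from the paper, and the difference matters for the claim's role in the overall structure. You check that the candidate pair satisfies Definition~\ref{def:Pade notation} (degree, order of vanishing, normalization) and then cite the uniqueness statement, Theorem~\ref{thm:consistency}\ref{thm:consistency, uniqueness}, to conclude. The paper instead argues in the derivational direction without ever invoking that theorem: since $\ord((1-z)^{\omega_0})=0$, any admissible $H_0$ must satisfy $\ord(H_0)\geq\sigma-1=\rho_0$ and $\deg(H_0)\leq\rho_0$, which forces $H_0=Cz^{\rho_0}$, and the normalization then pins down $C=1/\rho_0!$. This self-contained argument is deliberate: the claim is billed as ``slightly stronger than the $M=0$ case of Theorem~\ref{thm:consistency}'' precisely because its proof \emph{establishes} the $M=0$ uniqueness rather than consuming it, and in the paper's ordering the general uniqueness statement is only completed two claims later (after Claim~\ref{Claim:ord(G) not large}). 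Your appeal to Theorem~\ref{thm:consistency}\ref{thm:consistency, uniqueness} is not circular — the later proof of uniqueness rests on Claims~\ref{Claim:G not 0}, \ref{Claim:ord(G) not small}, and \ref{Claim:ord(G) not large}, none of which use Claim~\ref{Claim:M=0} — but it does create a forward reference if inserted at this point in the text, and it forfeits the small bonus the paper extracts here (the $M=0$ instance of uniqueness proved from scratch). What your version buys is brevity and a clean ``verify the axioms, invoke uniqueness'' template; what the paper's buys is independence from the yet-unproved theorem and a genuine base case for the subsequent inductions. If you keep your approach, you should at least note explicitly that the uniqueness proof does not depend on this claim, so the dependency graph stays acyclic.
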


\begin{proof}
We need to show that the only nonzero polynomials $H_0$ with $\ord(H_0(z)(1-z)^{\omega_0})\geq \sigma-1$ and degree at most $\rho_0$ are 
$H_0(z)=Cz^{\rho_0}$. First, observe that $\sigma=\rho_0+1$. As $\ord((1-z)^{\omega_0})=0$, we know that $\ord(H_0(z)(1-z)^{\omega_0})=\ord(H_0)$. 
That is, $H_0$ must be a nonzero polynomial with $\ord(H_0)\geq \rho_0$ and $\deg(H_0)\leq \rho_0$. The only candidates are $\sH{0}{z}{\langle \omega_0 \rangle}{\langle \rho_0 \rangle} = C z^{\rho_0}$ and $\sG{z}{\langle \omega_0 \rangle}{\langle \rho_0 \rangle} = C z^{\rho_0}(1-z)^{\omega_0}$.

Now, observe that 
	\[
	C = \lim_{z\to0} \frac{Cz^{\rho_0}(1-z)^{\omega_0}}{z^{\rho_0}} = \frac{1}{(\sigma-1)!} = \frac{1}{\rho_0!}.
	\] 
Thus, Theorem~\ref{thm:consistency}\ref{thm:consistency, uniqueness} is proved in the $M=0$ case, and the values of $\Hzor$ and $\Gzor$ are as claimed here.
\end{proof}

\begin{claim}\label{Claim:ord(G) not large}
If $\deg(H_m(z))\leq \rho_m$, and some $H_m\neq 0$, then
	\[\ord\left(\sum_{m=0}^M H_m(z) (1-z)^{\omega_m}\right) \leq \sigma -1.\]
\end{claim}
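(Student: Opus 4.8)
The plan is to argue by contradiction, combined with a dimension count that dovetails with the existence argument of Claim \ref{Claim:ord(G) not small}. Suppose, for some choice of polynomials $H_m$ with $\deg(H_m) \le \rho_m$ and not all zero, the sum $G(z) = \sum_{m=0}^M H_m(z)(1-z)^{\omega_m}$ satisfies $\ord(G) \ge \sigma$. Consider the linear map $\Phi$ sending a tuple of coefficient vectors (with $H_m$ of degree at most $\rho_m$, so $\sigma$ unknowns in total) to the vector of the first $\sigma$ Maclaurin coefficients of $G$. The existence claim says the kernel of the first $\sigma-1$ of those coordinates is nonzero; the hypothetical $G$ above would lie in the kernel of all $\sigma$ coordinates. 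I would like to conclude that $\Phi$ therefore has rank at most $\sigma-1$, which forces a contradiction — but to get the contradiction I need to know that $\Phi$ is surjective onto $\CC^\sigma$, i.e. that one can realize any prescribed initial string of $\sigma$ Taylor coefficients. That is exactly the content I want to establish, so a bare dimension count is circular; the real work is to exhibit, for each $M$, a tuple $(H_m)$ producing a nonzero value of the $\sigma$-th coefficient while the first $\sigma-1$ vanish.

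The cleaner route, and the one I would actually carry out, is induction on $M$, using Claim \ref{Claim:M=0} as the base case. For the inductive step, suppose the statement holds for $M-1$. Given $G(z) = \sum_{m=0}^M H_m(z)(1-z)^{\omega_m}$ with the $H_m$ of the stated degrees and not all zero, I would differentiate: consider $(1-z)^{1-\omega_M} G(z)$ or, better, apply an operator that kills the $(1-z)^{\omega_M}$ term while controlling degrees. Concretely, write $\widetilde G(z) = \frac{d}{dz}\big[(1-z)^{-\omega_M} G(z)\big] \cdot (1-z)^{\omega_M+1}$; since $\frac{d}{dz}\big[(1-z)^{-\omega_M} H_M(z)(1-z)^{\omega_M}\big] = H_M'(z)$, this produces $\widetilde G(z) = (1-z)^{\omega_M+1} H_M'(z) + \sum_{m=0}^{M-1} \big[(1-z) H_m'(z) + (\omega_m-\omega_M) H_m(z)\big](1-z)^{\omega_m}$. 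Grouping the first term with the $m = M-1$ block (or treating $\omega_M+1$ as an extra exponent) and noting that $(1-z)H_m'(z) + (\omega_m-\omega_M)H_m(z)$ has degree at most $\rho_m$ with the leading coefficient $(-\rho_m + \omega_m - \omega_M)$ times that of $H_m$ — nonzero since the $\omega$'s differ by no integer — I reduce to an $M$-term (effectively $(M{-}1)$-parameter after absorbing one exponent) configuration with the same $\vr$ but one fewer free $\omega$; differentiation drops $\ord$ by exactly one, so $\ord(G) \ge \sigma$ would give $\ord(\widetilde G) \ge \sigma - 1$, contradicting the inductive hypothesis once I check the reduced configuration still satisfies the "no integer difference" hypothesis and has the right $\sigma$.

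The main obstacle is bookkeeping in that reduction: I must make sure the differentiation operator genuinely decreases the number of distinct binomial exponents (or equivalently that the $(1-z)^{\omega_M}$ and $(1-z)^{\omega_M+1}$ pieces can be consolidated into a single term of the correct degree without losing nonvanishing of some $H_m$), that no $H_m$ is accidentally annihilated (this is where "$\rho_m \ne \omega_m - \omega_M$" — guaranteed by the no-integer-difference hypothesis — is essential), and that the order of vanishing drops by exactly $1$ rather than more. A slicker alternative that avoids the exponent-merging headache: iterate the first-order operator $D_\omega : f \mapsto (1-z)^{1-\omega} \frac{d}{dz}\big[(1-z)^{\omega} f\big]$, which has the property that $D_{\omega_M}$ kills $(1-z)^{\omega_M}$ and sends $(1-z)^{\omega_m} P(z)$ to $(1-z)^{\omega_m}\big[(1-z)P'(z) + (\omega_m-\omega_M)P(z)\big]$; applying $D_{\omega_M}$, then $D_{\omega_{M-1}}$, and so on down to $D_{\omega_1}$ reduces all the way to the $M=0$ case, where Claim \ref{Claim:M=0} applies directly. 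I would verify at each stage that degrees stay $\le \rho_m$, leading coefficients stay nonzero, the surviving tuple is not identically zero, and $\ord$ drops by exactly one per application — so after $M$ applications $\ord(G) \ge \sigma$ would yield an $M=0$ remainder of order $\ge \sigma - M = \rho_0 + 1 > \rho_0$, contradicting Claim \ref{Claim:M=0}.
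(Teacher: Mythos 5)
Your overall strategy---kill the binomial blocks one at a time with twisted first-order differential operators, track that each differentiation lowers $\ord$ by exactly one while the degree bounds $\rho_m$ are preserved, and land on the $M=0$ case of Claim~\ref{Claim:M=0}---is the same differentiation argument the paper uses. But the reduction step as you describe it fails whenever some $\rho_m>0$ for $m\geq 1$. The operator $(1-z)^{\omega_M+1}\frac{d}{dz}(1-z)^{-\omega_M}$ annihilates the block $H_M(z)(1-z)^{\omega_M}$ only when $H_M$ is constant; in general it leaves $H_M'(z)(1-z)^{\omega_M+1}$. Since no two exponents differ by an integer, $\omega_M+1$ is distinct from every other $\omega_k$, so this term can neither be ``grouped with the $m=M-1$ block'' nor absorbed: after one application you still have $M+1$ distinct exponents, namely $\omega_0,\dots,\omega_{M-1},\omega_M+1$, with the last degree bound lowered from $\rho_M$ to $\rho_M-1$. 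Consequently the count in your ``slicker alternative'' is wrong: applying $D_{\omega_M},D_{\omega_{M-1}},\dots,D_{\omega_1}$, i.e.\ $M$ operators in all, does not reach the $M=0$ case, and $\sigma-M=\sum_m\rho_m+1$, not $\rho_0+1$. Worse, because one application never reduces the number of exponents, an induction on $M$ alone never closes; you would need to induct on $\sigma$ (or on $(M,\vr)$ lexicographically), which your write-up does not set up.

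The repair is exactly the paper's bookkeeping. Either remove the $M$-th block by iterating the \emph{shifted} operators $d_{\omega_M},d_{\omega_M+1},\dots,d_{\omega_M+\rho_M}$, whose composite telescopes to $(1-z)^{\omega_M+\rho_M+1}\bigl(\frac{d}{dz}\bigr)^{\rho_M+1}(1-z)^{-\omega_M}$ as in Claim~\ref{Claim:Differentials}, so that eliminating one block costs $\rho_M+1$ differentiations and, after $\sum_{m\geq1}(\rho_m+1)=\sigma-\rho_0-1$ steps, a counterexample of order $\geq\sigma$ would leave a nonzero $P(z)(1-z)^{\tilde\omega}$ with $\deg P\leq\rho_0$ and $\ord\geq\rho_0+1$, a contradiction; or, as the paper does, take a minimal counterexample in $(M,\rho_0)$ after normalizing $\omega_0=0$, where a single $\frac{d}{dz}$ either lowers $\rho_0$ (same $M$) or, when $\rho_0=0$, deletes that block and lowers $M$. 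Your remaining checkpoints are the right ones: the no-integer-difference hypothesis makes the new coefficients $(1-z)H_m'-(\omega_m-\omega_M)H_m$ nonzero whenever $H_m\neq0$, and one should also note that the reduced tuple cannot be identically zero (otherwise, with Claim~\ref{Claim:G not 0}, the original $H_m$ would all vanish). The sign conventions in your $D_\omega$ (it must conjugate by $(1-z)^{-\omega}$ to kill $(1-z)^{\omega}$ times a constant) are immaterial slips, but the reduction scheme and induction variable as written constitute a genuine gap.
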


\begin{proof}
Suppose $M=0$. With $H_0$ a nonzero polynomial with degree at most $\rho_0$, we have
	\[
	\ord\left( H_0(z) (1-z)^{\omega_0}\right) = \ord\left( H_0(z) \right) \leq \deg(H_0) \leq \rho_0 = \sigma-1.\]
So, the claim holds for $M=0$.

Assume the claim is false, and let $M$ be the smallest positive integer for which this claim does not hold, and let $\rho_0$ correspond to the first counterexample: that is, for any $\vo,\vr$ that has a smaller $M$, or the same $M$ but smaller $\rho_0$, the claim holds. Let
	\[
	G(z) \coloneqq  \sum_{m=0}^M H_m(z) (1-z)^{\omega_m} 
	\]
be a counterexample, i.e., $\ord(G) \geq \sigma$.  As multiplying by $(1-z)^{-\omega_0}$ does not change $\ord(G(z))$, we may assume that $\omega_0=0$. 

If $\rho_0=0$, so that $H_0(z)$ is a constant, we have
	\begin{align*}
	\frac{d}{dz} G(z) 
		&=\tfrac{d}{dz}H_0(z) + \sum_{m=1}^M \tfrac{d}{dz}H_m(z) (1-z)^{\omega_m} \\
		&= \sum_{m=1}^M \left(H'_m(z)(1-z)-H_m(z)\omega_m \right) (1-z)^{\omega_m-1}.
	\end{align*}
Note that $\deg(H'_m(z)(1-z)-H_m(z)\omega_m) \leq \deg(H_m) \leq \rho_m$, for $1\leq m \leq M$. Thus $\tfrac{d}{dz}G(z)$ has a smaller $M$ and the 
same $\rho_m$. By assumption on $G(z)$, 
$$\ord(\tfrac {d}{dz}G(z)) \geq \sigma-1,$$ but by our assumption of the minimality of $G(z)$, we know that
	\[
	\ord\left( \tfrac{d}{dz} G(z) \right) \leq \sigma-2.
	\]
This contradiction shows that $\rho_0\neq 0$. But even in the case that $\rho_0>0$,
	\begin{align*}
	\frac{d}{dz} G(z) 
		&=H_0'(z)+\sum_{m=1}^M \tfrac{d}{dz}H_m(z) (1-z)^{\omega_m} \\
		&= H_0'(z)+\sum_{m=0}^M \left(H'_m(z)(1-z)-H_m(z)\omega_m \right) (1-z)^{\omega_m-1}.
	\end{align*}
As above, our assumption on the minimality of $\rho_0$, as $\deg(H'_0)=\deg(H_0)-1$, implies a contradiction.
\end{proof}

The proof of the next claim establishes the rest of Theorem~\ref{thm:consistency}\ref{thm:consistency, uniqueness}, and justifies Definition~\ref{def:Pade notation}.

\begin{claim}
Suppose that $H_m$ (with $0\leq m \leq M$) are polynomials with degree at most $\rho_m$, and that $G(z):= \sum_{m=0}^M H_m(z)(1-z)^{\omega_m}$ has a zero of order at least $\sigma-1$ at $z=0$. Then $G(z)$ has an order of exactly $\sigma-1$ at $z=0$. Suppose further that 
    \[\lim_{z\to0} \frac{G(z)}{z^{\sigma-1}} = \frac{1}{(\sigma-1)!}.\]
Then $G$ and $H_m$ are uniquely determined by these constraints.
The polynomial $H_m(z)$ has degree exactly $\rho_m$, and there is no $\alpha\in\CC$ with $H_0(\alpha)=\dots=H_m(\alpha)=0$. 
\end{claim}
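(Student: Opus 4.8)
The plan is to establish the three assertions — exact order $\sigma-1$, uniqueness, and $\deg H_m = \rho_m$ with no common root — essentially in sequence, leaning on Claims~\ref{Claim:ord(G) not small}, \ref{Claim:ord(G) not large}, and the $M=0$ base case (Claim~\ref{Claim:M=0}), together with a dimension count. First I would observe that Claim~\ref{Claim:ord(G) not large} immediately gives $\ord(G) \le \sigma-1$, which combined with the hypothesis $\ord(G) \ge \sigma-1$ forces $\ord(G) = \sigma-1$ exactly; this disposes of the first sentence with no real work.

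For uniqueness, I would set up the linear-algebra picture already implicit in the proof of Claim~\ref{Claim:ord(G) not small}. Write the $\sigma$ coefficients of $H_0,\dots,H_M$ as a vector in $\CC^\sigma$, and consider the linear map sending this vector to the tuple of first $\sigma-1$ Maclaurin coefficients of $G(z)=\sum_m H_m(z)(1-z)^{\omega_m}$, a map $\CC^\sigma \to \CC^{\sigma-1}$. Claim~\ref{Claim:ord(G) not small} says its kernel is nontrivial; Claim~\ref{Claim:ord(G) not large} says that no nonzero element of the kernel can also kill the $z^{\sigma-1}$-coefficient (else $\ord(G)\ge\sigma$), so the kernel is exactly one-dimensional and the map sending a kernel vector to its $z^{\sigma-1}$-coefficient is a nonzero linear functional on that line. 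Hence there is a unique choice making that coefficient equal to $1/(\sigma-1)!$, which is precisely the normalization in the statement; this pins down $G$ and all the $H_m$ uniquely.

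For the degree claim, I would argue by contradiction: if $\deg H_{m_0} < \rho_{m_0}$ for some $m_0$, replace $\rho_{m_0}$ by $\deg H_{m_0}$ (or by $\rho_{m_0}-1$), obtaining a strictly smaller parameter $\sigma' < \sigma$; the same $H_m$ then witness $\ord(G) \ge \sigma-1 > \sigma'-1$, contradicting Claim~\ref{Claim:ord(G) not large} applied with the smaller vector $\vr'$. For the no-common-root statement, suppose $H_0(\alpha)=\dots=H_M(\alpha)=0$. The case $\alpha=0$ is immediate since then every $H_m$ is divisible by $z$, so $G$ is divisible by $z$ and $\tilde H_m := H_m/z$ has degree $\le \rho_m-1$, again contradicting minimality via Claim~\ref{Claim:ord(G) not large}. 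For $\alpha\ne0$: write $H_m(z) = (z-\alpha)\tilde H_m(z)$ with $\deg\tilde H_m \le \rho_m-1$; then $G(z)=(z-\alpha)\tilde G(z)$ where $\tilde G = \sum_m \tilde H_m (1-z)^{\omega_m}$ has $\ord(\tilde G) = \ord(G) = \sigma-1$ (dividing by $z-\alpha$, a unit at $z=0$, does not change the order), which exceeds $\sigma'-1$ for $\sigma' = \sum(\rho_m-1+1) = \sigma - (M+1) < \sigma$, contradicting Claim~\ref{Claim:ord(G) not large} once more.

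The main obstacle is bookkeeping in the uniqueness step: one must be careful that the linear map $\CC^\sigma\to\CC^{\sigma-1}$ genuinely has rank $\sigma-1$ (equivalently, one-dimensional kernel), which is exactly the content obtained by pairing Claim~\ref{Claim:ord(G) not small} (kernel nonzero) with Claim~\ref{Claim:ord(G) not large} (no kernel vector survives to order $\sigma$). Everything else is a clean reduction to the already-proven Claim~\ref{Claim:ord(G) not large} via a smaller choice of $\vr$; I would just make sure each such reduction keeps the hypotheses of that claim (nonnegative $\rho_m$, not-all-zero $H_m$, no integer differences among the $\omega_m$) intact, the last of which is automatic since the $\omega_m$ are untouched.
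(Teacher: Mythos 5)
Your proposal is correct and takes essentially the same route as the paper: exact order and exact degree come from Claim~\ref{Claim:ord(G) not large}, your one-dimensional-kernel argument for uniqueness is the paper's subtract-two-candidates argument (Claims~\ref{Claim:G not 0} and~\ref{Claim:ord(G) not large}) in linear-algebra dress, and the common-root statement is handled, as in the paper, by dividing out $z-\alpha$ and contradicting the reduced system. The only caveat, shared with the paper's own write-up, is the degenerate bookkeeping when some $\rho_m=0$ (then $\rho_m-1$ is negative, though in that case $H_m\equiv0$ and the reduction is trivially patched) and the edge case $M=0$, $\alpha=0$, where the no-common-root assertion really requires $M\geq1$.
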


\begin{proof}
By Claims~\ref{Claim:ord(G) not small} and~\ref{Claim:ord(G) not large}, we can take $\ord(G(z))$ to be at least $\sigma-1$, and can never have it be larger than $\sigma-1$, so there are polynomials $H_m$ 
with
	\[G(z) = \sum_{m=0}^M H_m(z) (1-z)^{\omega_m}  = C z^{\sigma-1} + O(z^\sigma).\]
By multiplying through by a constant, we can take
	\[C = \frac{1}{(\sigma-1)!}.\]

If both $G_1(z)$ and $G_2(z)$ have this form, then their difference would have a zero of order greater than $\sigma-1$, and by Claim~\ref{Claim:ord(G) not large} 
this is not possible unless $G_1(z)-G_2(z)$ is identically 0. By Claim~\ref{Claim:G not 0}, however, this is only possible if all of the polynomials are identically 
0. That is, only if $G_1(z)=G_2(z)$. Thus, $G$ is uniquely defined and the definition of $\Gzor$ is justified.

If
	\[\Gzor = \sum_{m=0}^M H_m(z) (1-z)^{\omega_m} = \sum_{m=0}^M B_m(z) (1-z)^{\omega_m}\]
for polynomials $H_m,B_m$ of degree at most $\rho_m$, then
	\[0 =  \sum_{m=0}^M (H_m(z) - B_m(z)) (1-z)^{\omega_m}.\]
But by Claim~\ref{Claim:G not 0}, this implies that $H_m(z)=B_m(z)$. Thus, $H_m$ is uniquely defined and the definition of $\Hzor$ is justified.

Suppose that $\Hzor$ has degree strictly less than $\rho_m$, which in particular means that $\rho_m\geq1$. Let $\ve_m$ be the $M+1$-dimensional unit vector in the $m$-th coordinate direction. Then $\sG{z}{\vo}{\vr-\ve_m}$ is a constant multiple of $\Gzor$, which has a zero of order $\sigma(\vr)-1>\sigma(\vr-\ve_m)-1$, contradicting Claim~\ref{Claim:ord(G) not large}.

Finally, if $H_m(\alpha)=0$ for $0\leq m \leq  M$,  then  $H_m(z)/(z-\alpha)$ are polynomials with degree $\rho_m-1$, and $G(z)/(z-\alpha)$ has a zero of order $\sigma(\vr)-1$ at $z=0$, contradicting the uniqueness of  $G$.
\end{proof}

\subsection{Respectful Differential Operators}
\begin{claim}[Differentiation To Reduce $\rho$]\label{Claim:Differentials}
Define the operators
	\[ d_{\omega} \coloneqq  (1-z)^{\omega+1}\left(\frac{d}{dz}\right) (1-z)^{-\omega} .\]
If $\rho_i>0$, then $d_{\omega_i}$ reduces $\rho_i$ by 1 and increases $\omega_i$ by 1, i.e., if $\rho_i>0$, then
	\[ d_{\omega_i} \Gzor = \sG{z}{\vo + \ve_i }{ \vr - \ve_i}.\]
If $\rho_i=0$, then $d_{\omega_i}$ eliminates the $i$-th coordinates of $\vo$ and $\vr$, i.e., if $\rho_i=0$, then
	\[d_{\omega_0} \Gzor = \sG{z}{\vo^{\star i}}{ \vr^{\star i} }.\]
Consequently, for any $\rho_0$,
	\[  (1-z)^{\omega_0+\rho_0+1} \left(\frac{d}{dz}\right)^{\rho_0+1} (1-z)^{-\omega_0} \Gzor =
                \sG{z}{\langle \omega_1, \dots, \omega_M \rangle }{ \langle \rho_1, \dots, \rho_M \rangle}. \]
\end{claim}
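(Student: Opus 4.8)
The plan is to follow the action of $d_\omega$ through the defining sum for $\Gzor$ term by term, and then to invoke the uniqueness part of Theorem~\ref{thm:consistency}. Everything rests on one short computation: for a polynomial $H$, expanding the definition of $d_\omega$ and using the product rule gives
\[
  d_\omega\bigl[H(z)(1-z)^{\omega'}\bigr] = \bigl[(1-z)H'(z)-(\omega'-\omega)H(z)\bigr](1-z)^{\omega'}.
\]
The case $\omega'=\omega$ is the special one responsible for the ``reduce $\rho$'' behaviour: there the bracket collapses to $(1-z)H'(z)$, and the leading $(1-z)$ merges with $(1-z)^{\omega}$, so that
\[
  d_\omega\bigl[H(z)(1-z)^{\omega}\bigr] = H'(z)\,(1-z)^{\omega+1};
\]
the exponent \emph{rises} by one while the polynomial degree \emph{drops} by one.

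Applying this termwise to $\Gzor=\sum_{m=0}^M H_m(z)(1-z)^{\omega_m}$, where $H_m(z)$ abbreviates $\Hzor$, I would obtain
\[
  d_{\omega_i}\Gzor = H_i'(z)\,(1-z)^{\omega_i+1} + \sum_{\substack{m=0\\ m\neq i}}^{M}\bigl[(1-z)H_m'(z)-(\omega_m-\omega_i)H_m(z)\bigr](1-z)^{\omega_m}.
\]
Each $H_m$ has degree at most $\rho_m$, so $H_i'$ has degree at most $\rho_i-1$ and each bracketed polynomial has degree at most $\rho_m$; also, replacing $\omega_i$ by $\omega_i+1$ (or deleting the $i$-th coordinate) keeps the exponents distinct with no integer differences. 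Hence when $\rho_i>0$ the right-hand side has exactly the shape prescribed by Definition~\ref{def:Pade notation} for the parameters $(\vo+\ve_i,\vr-\ve_i)$, while when $\rho_i=0$ the polynomial $H_i$ is a constant, so $H_i'=0$ and the first term disappears, leaving a sum of the prescribed shape for the parameters $(\vo^{\star i},\vr^{\star i})$. To conclude in either case I would pin down the zero at the origin: since $(1-z)^{-\omega_i}=1+O(z)$, Definition~\ref{def:Pade notation} gives $(1-z)^{-\omega_i}\Gzor=\tfrac{z^{\sigma-1}}{(\sigma-1)!}+O(z^{\sigma})$ with $\sigma=\sigma(\vr)$, and differentiating then multiplying by $(1-z)^{\omega_i+1}=1+O(z)$ yields $d_{\omega_i}\Gzor=\tfrac{z^{\sigma-2}}{(\sigma-2)!}+O(z^{\sigma-1})$. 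Since $\sigma(\vr)-2=\sigma(\vr-\ve_i)-1$, and $\sigma(\vr)-2=\sigma(\vr^{\star i})-1$ when $\rho_i=0$, the function $d_{\omega_i}\Gzor$ (not identically zero, as its leading term is nonzero) meets every requirement in Theorem~\ref{thm:consistency} and Definition~\ref{def:Pade notation} for the relevant parameters, so the uniqueness statement identifies it as $\sG{z}{\vo+\ve_i}{\vr-\ve_i}$ if $\rho_i>0$ and as $\sG{z}{\vo^{\star i}}{\vr^{\star i}}$ if $\rho_i=0$.

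For the final ``consequently'', I would first record the operator identity
\[
  d_{\omega_0+\rho_0}\circ d_{\omega_0+\rho_0-1}\circ\cdots\circ d_{\omega_0+1}\circ d_{\omega_0} = (1-z)^{\omega_0+\rho_0+1}\left(\frac{d}{dz}\right)^{\rho_0+1}(1-z)^{-\omega_0},
\]
which telescopes because the adjacent factors $(1-z)^{\omega_0+j+1}$ (the leftmost factor of $d_{\omega_0+j}$) and $(1-z)^{-(\omega_0+j+1)}$ (the rightmost factor of $d_{\omega_0+j+1}$) cancel for $j=0,\dots,\rho_0-1$, leaving a string of $\rho_0+1$ plain derivatives between $(1-z)^{\omega_0+\rho_0+1}$ and $(1-z)^{-\omega_0}$. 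Now apply the part already proved: starting from $\Gzor$, the first $\rho_0$ operators in the composition each lower the current zeroth $\rho$-coordinate by one while advancing the zeroth $\omega$-coordinate from $\omega_0+j$ to $\omega_0+j+1$ (so each $d_{\omega_0+j}$ is applied with its subscript equal to the current zeroth exponent, exactly as required), reaching $\sG{z}{\langle\omega_0+\rho_0,\omega_1,\dots,\omega_M\rangle}{\langle 0,\rho_1,\dots,\rho_M\rangle}$; the last operator $d_{\omega_0+\rho_0}$ then acts on a system whose zeroth $\rho$-coordinate is $0$, so it deletes that coordinate and produces $\sG{z}{\langle\omega_1,\dots,\omega_M\rangle}{\langle\rho_1,\dots,\rho_M\rangle}$, as desired. (If $\rho_0=0$ the composition is the single operator $d_{\omega_0}$ and this reduces to the $\rho_i=0$ case above.)

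I expect the only real friction to be careful bookkeeping rather than any genuine difficulty: keeping straight that it is precisely the summand with exponent $\omega_i$ whose exponent is raised and whose degree is lowered, checking the two $\sigma$-identities relating $\sigma(\vr)$ to $\sigma(\vr-\ve_i)$ and to $\sigma(\vr^{\star i})$, and confirming that along the iteration the no-integer-difference hypothesis persists so that Theorem~\ref{thm:consistency} remains available at every stage.
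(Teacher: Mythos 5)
Your proposal is correct and follows essentially the same route as the paper: compute the action of $d_{\omega_i}$ termwise (the $i$-th summand becoming $H_i'(z)(1-z)^{\omega_i+1}$, the others acquiring brackets of degree at most $\rho_m$), verify the order of vanishing and the normalization at $z=0$, invoke the uniqueness statement of Theorem~\ref{thm:consistency}, and obtain the final identity by telescoping the composition $d_{\omega_0+\rho_0}\cdots d_{\omega_0+1}d_{\omega_0}$. Your write-up is in fact a bit more explicit than the paper's on the leading-coefficient check and on the bookkeeping of the iterated application, but the argument is the same.
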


\begin{proof}
As $d_{\omega}$ is linear and
    \[\Gzor =\sH{i}{z}{\vo}{\vr} (1-z)^{\omega_i}+ \sum_{\substack{m=0 \\ m \neq i}}^M \Hzor (1-z)^{\omega_m},\]
we can assess the impact of $d_{\omega_i}$ on the two pieces separately. First, 
    \begin{align*}
      d_{\omega_i} \sH{i}{z}{\vo}{\vr} (1-z)^{\omega_i} 
          &=  (1-z)^{\omega_i+1} \left( \frac{d}{dz} \right) (1-z)^{-\omega_i} \cdot \sH{i}{z}{\vo}{\vr}(1-z)^{\omega_i} \\
          &= \left(\frac{d}{dz} \sH{i}{z}{\vo}{\vr} \right) (1-z)^{\omega_i+1} .
    \end{align*}
This is $0$ if $\rho_i=0$, and if $\rho_i>0$ it has the form $P_i(z)(1-z)^{\omega_i+1}$ with $P_i$ a polynomial of degree $\rho_i-1$. The other piece is more involved (for the sake of the margins, we let $H(z)\coloneqq \Hzor$ in the following displayed equations):
    \begin{align*}
      d_{\omega_i} \sum_{\substack{m=0 \\ m \neq i}}^M & \Hzor \, (1-z)^{\omega_m} \\
            &= (1-z)^{\omega_i+1} \left( \frac{d}{dz} \right) (1-z)^{-\omega_i} \sum_{\substack{m=0 \\ m \neq i}}^M H(z) (1-z)^{\omega_m} \\
            &= (1-z)^{\omega_i+1} \sum_{\substack{m=0 \\ m \neq i}}^M 
                        \frac{d}{dz} H(z) (1-z)^{\omega_m-\omega_i} \\
            &= (1-z)^{\omega_i+1} \sum_{\substack{m=0 \\ m \neq i}}^M 
                        (1-z)^{\omega_m-\omega_i} \tfrac{d}{dz} H(z) 
                      - H(z) (\omega_m-\omega_i)(1-z)^{\omega_m-\omega_i-1} \\
            &= (1-z)^{\omega_i+1} \sum_{\substack{m=0 \\ m \neq i}}^M  
                        \left( (1-z) \tfrac{d}{dz} H(z) -  (\omega_m-\omega_i) H(z) \right) (1-z)^{\omega_m-\omega_i-1}\\
            &=\sum_{\substack{m=0 \\ m \neq i}}^M \left( (1-z) \tfrac{d}{dz} \Hzor 
            		- (\omega_m-\omega_i) \Hzor  \right) (1-z)^{\omega_m}.
    \end{align*}
This has the form
    \[ \sum_{\substack{m=0 \\ m \neq i}}^M P_m(z) (1-z)^{\omega_m}\]
with $P_m$ a polynomial of degree at most $\rho_m$. To wit, $d_{\omega_i}\Gzor$ has the correct form to be $\sG{z}{\vo + \ve_i }{ \vr - \ve_i}$ if $\rho_i>0$, and the correct form to be $\sG{z}{\vo^{\star i}}{\vr^{\star i}}$ if $\rho_i=0$. By our earlier uniqueness result, it remains only to check that $d_{\omega_i}\Gzor$ has a zero (at $z=0$) of order one less than $\Gzor$ and the correct scaling. These are both clear, as $(1-z)^{\omega_i+1}$ and $(1-z)^{-\omega_i}$ have no zero at $z=0$, and the $\tfrac{d}{dz}$ reduces the order of the zero by one and the scaling coefficient is multiplied by $\sigma-1$.

The last sentence of Claim~\ref{Claim:Differentials} is now immediate, as the product of operators telescopes
	\[
	d_{\omega_0+\rho_0}\cdots d_{\omega_0+1}d_{\omega_0} 
		= (1-z)^{\omega_0+\rho_0+1} \left(\frac{d}{dz}\right)^{\rho_0+1} (1-z)^{-\omega_0}.
	\]
\end{proof}

The previous claim establishes $\Gzor$ as the solution of a differential equation (henceforth DE), which we make explicit next. Then, we solve the DE to express $\Gzor$ as 
an $M$-fold iterated integral.

\begin{claim}\label{Claim:G as DE}
Let $D_0,\dots,D_M$ be the operators
	\[	
	D_i \coloneqq  (1-z)^{\omega_i+\rho_i+1} \left(\frac{d}{dz}\right)^{\rho_i+1} (1-z)^{-\omega_i}
	\]
With this notation, $G(z)=\Gzor$ is the unique analytic solution to the differential equation
	\[
	D_{M-1} \cdots D_1 D_0 G(z) = 
				\frac{z^{\rho_M}}{\rho_M!} \, (1-z)^{\omega_M}
	\]
with initial conditions
	\[ G^{(\sigma-1)}(0) = 1, \qquad G^{(m)}(0) = 0, \qquad (0\leq m \leq \sigma-2).\]
\end{claim}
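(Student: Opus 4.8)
The plan is to build on Claim~\ref{Claim:Differentials}, which already establishes how a single operator $D_i = d_{\omega_i+\rho_i}\cdots d_{\omega_i+1}d_{\omega_i}$ transforms $\Gzor$. Applying Claim~\ref{Claim:Differentials} repeatedly, the operator $D_0$ strips off the $0$-th coordinate of $\vo$ and $\vr$ entirely (the ``Consequently'' line of that claim, combined with the fact that $d_{\omega_i}$ with $\rho_i=0$ deletes the coordinate), sending $\Gzor$ to $\sG{z}{\langle\omega_1,\dots,\omega_M\rangle}{\langle\rho_1,\dots,\rho_M\rangle}$. Iterating, $D_{M-1}\cdots D_1 D_0$ applied to $\Gzor$ deletes coordinates $0$ through $M-1$ and leaves the $M=0$ Padé remainder in the single remaining variable $\omega_M,\rho_M$, which by Claim~\ref{Claim:M=0} equals exactly $\frac{z^{\rho_M}}{\rho_M!}(1-z)^{\omega_M}$. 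This establishes that $G=\Gzor$ satisfies the stated differential equation; the initial conditions $G^{(m)}(0)=0$ for $0\le m\le\sigma-2$ and $G^{(\sigma-1)}(0)=1$ are simply the normalization $\ord(G)=\sigma-1$ together with $\lim_{z\to 0}G(z)/z^{\sigma-1}=1/(\sigma-1)!$ from Definition~\ref{def:Pade notation}, rewritten in terms of Taylor coefficients.

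Next I would address uniqueness: I must show no other analytic function satisfies both the DE and the initial conditions. The cleanest route is to observe that $D_{M-1}\cdots D_1 D_0$ is, after clearing the $(1-z)^{\pm}$ factors (which are units near $z=0$), a linear differential operator of order $(\rho_0+1)+\cdots+(\rho_{M-1}+1) = \sigma - (\rho_M+1)$ with analytic coefficients and leading coefficient $1$ (a unit near $0$). If $G_1,G_2$ both solve the problem, then $\Delta = G_1-G_2$ is annihilated by the operator and has $\ord(\Delta)\ge\sigma-1$. I would argue that a nonzero analytic solution of $D_{M-1}\cdots D_1 D_0 \Delta = 0$ cannot vanish to order $\sigma-1 = \mathrm{ord}+(\rho_M+1) - 1 \ge \mathrm{ord}$ where $\mathrm{ord}$ is the operator's order: peeling off one $d_{\omega}$ at a time, $d_\omega \Delta = (1-z)^{\omega+1}(1-z)^{-\omega}{}'\!\cdots$ reduces $\ord$ by exactly one as long as $\Delta\not\equiv 0$ (since the $(1-z)$-powers are units and $d/dz$ drops order by one); after $\sigma-(\rho_M+1)$ applications the order would have to drop to a negative value, forcing $\Delta\equiv 0$. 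Alternatively, and perhaps more transparently, one inverts each $D_i$ by integration: solving $D_i F = R$ for analytic $R$ amounts to $(\rho_i+1)$-fold integration of $(1-z)^{-\omega_i-\rho_i-1}R$ against $(1-z)^{\omega_i}$, and the requirement that $F$ be analytic at $0$ with prescribed low-order Taylor coefficients pins down all $\rho_i+1$ constants of integration at each stage; unwinding from $i=M-1$ down to $i=0$ recovers a unique $G$.

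I expect the main obstacle to be the bookkeeping in the uniqueness half: matching the order of the composite operator against $\sigma-1$ and confirming that the initial conditions supply exactly enough constraints to kill all constants of integration, with no off-by-one slip in the count $\sum_{i=0}^{M-1}(\rho_i+1) = \sigma-\rho_M-1$ versus the $\sigma-1$ vanishing conditions (the discrepancy of $\rho_M$ is exactly absorbed by the order of vanishing of the right-hand side $z^{\rho_M}/\rho_M!\cdot(1-z)^{\omega_M}$, which forces the first $\rho_M$ of the integration constants at the innermost stage). The existence half is essentially a direct corollary of Claims~\ref{Claim:Differentials} and~\ref{Claim:M=0} and requires only careful tracking of which coordinate each $D_i$ eliminates.
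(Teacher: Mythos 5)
Your existence half is exactly the paper's: the telescoped operators from Claim~\ref{Claim:Differentials} successively delete coordinates, Claim~\ref{Claim:M=0} evaluates the last remaining $M=0$ remainder as $\frac{z^{\rho_M}}{\rho_M!}(1-z)^{\omega_M}$, and the initial conditions are just Definition~\ref{def:Pade notation} rewritten in Taylor coefficients. For uniqueness, though, you take a genuinely different route from the paper, whose entire argument is a coefficient recursion: write $G(z)=\sum_i g_i z^i$, note the initial conditions fix $g_0,\dots,g_{\sigma-1}$, and observe that comparing Taylor coefficients in the DE forces every $g_i$ with $i\ge\sigma$ (implicitly using that the composite operator has order $\sigma-\rho_M-1$ with unit leading coefficient at $z=0$, so each new coefficient enters with a nonzero factor). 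Your factorization argument buys more than that terse recursion: it identifies the analytic kernel of $D_{M-1}\cdots D_0$ as the span of the $(1-z)^{\omega_i+r}$ with $0\le i\le M-1$, $0\le r\le\rho_i$, so uniqueness can also be read off from Claim~\ref{Claim:ord(G) not large} (a nonzero kernel element vanishes to order at most $\sigma-\rho_M-2<\sigma-1$); and your alternative inversion-by-integration route is essentially the paper's own proof of Claim~\ref{Claim:G as integral}. One correction to how you close the peeling argument: the order does not ``drop to a negative value.'' Starting from $\ord(\Delta)\ge\sigma-1$ and applying the $\sigma-\rho_M-1$ first-order factors $d_\omega$, each of which sends a nonzero function of \emph{positive} order to a nonzero function of order exactly one less (positivity holds at every step precisely because only $\sigma-\rho_M-1\le\sigma-1$ factors are applied), you end with a nonzero function of order at least $\rho_M\ge0$; the contradiction is that the homogeneous equation declares this image identically zero, not that the order becomes negative. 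With that sentence repaired (and the positive-order proviso made explicit, since $d_\omega$ applied to an order-zero function need not lower the order), your argument is complete.
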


\begin{proof}
That $\Gzor$ satisfies the DE is a consequence of Claim~\ref{Claim:Differentials}, and the initial conditions are part of the definition of $\Gzor$.

As for uniqueness, suppose that $G(z) = \sum_{i=0}^\infty g_i z^i$. Observe that the initial conditions force 
	\[g_{\sigma-1} = \frac{1}{(\sigma-1)!} , \qquad 
		g_i = 0,\quad (0\leq i \leq \sigma-2).\]
The DE then forces the value of $g_i$ for $i\geq \sigma$. 
\end{proof}

It would be interesting to use the proof of the above claim to work out the full power series of $\Gzor$.

\subsection{Iterated Integrals}
Claim~\ref{Claim:G as integral} is Theorem~\ref{thm:G}\ref{thm:G(i)}, and Claim~\ref{Claim:G as integral 2} is Theorem~\ref{thm:G}\ref{thm:G(ii)}.
\begin{claim}[Mahler] \label{Claim:G as integral}
We can represent $\Gzor$ as an $M$-fold integral as
	\begin{multline*}
    \vr\,! \cdot \Gzor = \\ (1-z)^{\omega_0} \int_0^z \int_0^{t_1} \int_0^{t_2} \cdots \int_0^{t_{M-1}}
	\mathcal{G}(z,t_1,t_2,\dots,t_M)\,dt_M\cdots dt_3\,dt_2\,dt_1,
    \end{multline*}
where
	\[
	\mathcal{G}(t_0,t_1,\dots,t_M)
		= t_M^{\rho_M}
			\left(\prod_{h=1}^{M} \left( \frac{t_{h-1}-t_{h}}{1-t_{h}}\right)^{\rho_{h-1}}\right)
			\left(\prod_{h=1}^M (1-t_h)^{\omega_h-\omega_{h-1}-1} \right).\]
\end{claim}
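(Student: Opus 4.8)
The plan is to verify that the proposed $M$-fold iterated integral satisfies the characterizing differential equation and initial conditions of Claim~\ref{Claim:G as DE}, and then invoke uniqueness. Write $I(z)$ for the right-hand side, i.e., $I(z) = (1-z)^{\omega_0}\,J(z)$ where $J(z) = \int_0^z\!\int_0^{t_1}\!\cdots\!\int_0^{t_{M-1}} \mathcal G(z,t_1,\dots,t_M)\,dt_M\cdots dt_1$; I claim $\vr\,!\cdot\Gzor = I(z)$. Note first that $\mathcal G$ depends on $z$ only through the slot $t_0 = z$, which appears solely in the factor $(t_0 - t_1)^{\rho_0}$; this is the structural reason the iterated integral will unwind cleanly under the operators $D_i$.

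The key steps, in order. First I would apply $D_0 = (1-z)^{\omega_0+\rho_0+1}(d/dz)^{\rho_0+1}(1-z)^{-\omega_0}$ to $I(z)$. The conjugating factors $(1-z)^{\pm\omega_0}$ strip off and restore the $(1-z)^{\omega_0}$ prefactor, so $D_0 I(z) = (1-z)^{\omega_0+\rho_0+1}(d/dz)^{\rho_0+1} J(z)$. Differentiating the outermost integral $\int_0^z(\cdots)\,dt_1$ in $z$ replaces $t_1$ by $z$ throughout the remaining $(M-1)$-fold integral; after one differentiation the surviving $z$-dependence sits in the factor $\big((z - t_2)/(1-t_2)\big)^{\rho_1}$ raised from the $h=2$ term of the inner product (once $t_1 \mapsto z$), while the $h=1$ factors $\big((z-t_1)/(1-t_1)\big)^{\rho_0}(1-t_1)^{\omega_1-\omega_0-1}$ have collapsed — crucially the $(z-t_1)^{\rho_0}$ factor is what makes the first $\rho_0$ derivatives vanish at the right places so that $(d/dz)^{\rho_0+1}$ lands on a clean expression. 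Carrying this out, the factor $(1-z)^{\omega_1-\omega_0-1}$ emerges from the integrand and combines with $(1-z)^{\omega_0+\rho_0+1}$; one checks that $D_0 I(z)$ is exactly $(1-z)^{\omega_1}$ times an $(M-1)$-fold iterated integral of the same shape but with the index set shifted by one, i.e., $D_0 I = I'$ where $I'$ is the analogous integral for the reduced data $\langle\omega_1,\dots,\omega_M\rangle$, $\langle\rho_1,\dots,\rho_M\rangle$, up to the factor $\rho_0!$ that the $(\rho_0+1)$-fold differentiation produces against $(z-t_1)^{\rho_0}$. Iterating $M-1$ times peels off all of $D_0,\dots,D_{M-2}$ and leaves $D_{M-1}\cdots D_0\, I(z) = \vr\,!\cdot (z^{\rho_M}/\rho_M!)(1-z)^{\omega_M}$ after bookkeeping the accumulated $\rho_h!$'s against $\vr\,!$, which matches the right-hand side of the DE in Claim~\ref{Claim:G as DE}. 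Second, I would check the initial conditions: since $\mathcal G(z,t_1,\dots,t_M)$ is bounded near $z=0$ and the iterated integral over a simplex of ``dimension'' contributing total order $\rho_0 + (\rho_1+1) + \cdots + (\rho_M+1) + 1 = \sigma$ in the nested integration variables... more precisely, the outermost integration contributes one order, each $(t_{h-1}-t_h)^{\rho_{h-1}}$ contributes $\rho_{h-1}$ and each successive $\int_0^{t_{h-1}} dt_h$ contributes one, so $\ord_{z=0} J = \sigma - 1$ and $\ord_{z=0} I = \sigma-1$; computing the leading coefficient by rescaling all variables by $z$ (Beta-integral over the standard simplex) gives $\lim_{z\to0} I(z)/z^{\sigma-1} = \vr\,!/(\sigma-1)!$, matching the normalization after dividing by $\vr\,!$. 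By the uniqueness half of Claim~\ref{Claim:G as DE}, $\vr\,!\cdot\Gzor = I(z)$.

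The main obstacle I expect is the inductive differentiation step: tracking precisely how $(d/dz)^{\rho_0+1}$ acts on $\int_0^z (z-t_1)^{\rho_0}(\cdots)\,dt_1$ — one wants to use repeatedly that $\frac{d}{dz}\int_0^z f(z,t)\,dt = f(z,z) + \int_0^z \frac{\partial f}{\partial z}\,dt$, and because of the $(z-t_1)^{\rho_0}$ prefactor the boundary terms from the first $\rho_0$ differentiations all vanish, with the $(\rho_0+1)$-st derivative finally producing $\rho_0!$ times the integrand evaluated at $t_1 = z$. Getting the exponent of $(1-z)$ to balance correctly at each stage, and confirming that the shifted integrand is genuinely of the same form with $\omega_h - \omega_{h-1} - 1$ exponents intact, is the delicate bookkeeping; it is purely mechanical but is where off-by-one errors would hide. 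An alternative, perhaps cleaner, route is to induct on $M$ directly using Claim~\ref{Claim:Differentials}: check that applying $d_{\omega_0}$ to the claimed integral $\rho_0+1$ times reduces it to the claimed integral for the reduced data (which holds by the induction hypothesis), with the $M=0$ base case being Claim~\ref{Claim:M=0} since then the integral is the empty product times $t_M^{\rho_M}$... here with $M=0$ the expression reads $(1-z)^{\omega_0} z^{\rho_0}/\rho_0!$ after interpreting the empty integral and empty products, exactly matching Claim~\ref{Claim:M=0}. Either way the verification of the initial-order and leading-coefficient claim via the simplex Beta integral is routine.
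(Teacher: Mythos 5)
Your proposal is correct, but it runs the paper's argument in the opposite direction. The paper starts from $\Gzor$ and inverts the operators one at a time: from $D_0\,\Gzor = \sG{z}{\vo^{\star 0}}{\vr^{\star 0}}$ (via Claim~\ref{Claim:Differentials}) it antidifferentiates using the kernel identity $\left(\frac{d}{dz}\right)^{\rho_0+1}\int_0^z\frac{(z-t)^{\rho_0}}{\rho_0!}f(t)\,dt=f(z)$, so that $(1-z)^{-\omega_0}\Gzor$ and the one-step integral agree up to a polynomial of degree at most $\rho_0$, which is then shown to vanish by comparing orders of vanishing at $z=0$; iterating this $M$ times, with the accumulated shifts $S_i=\omega_{i-1}+\rho_{i-1}+1$, assembles $\mathcal G$. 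You instead take the $M$-fold integral as a candidate, differentiate it (the same kernel identity used forwards gives $D_0 I=\rho_0!\,I'$, with $I'$ the analogous integral for $\langle\omega_1,\dots,\omega_M\rangle$, $\langle\rho_1,\dots,\rho_M\rangle$), and appeal to the uniqueness half of Claim~\ref{Claim:G as DE}. The trade-off is clear: the paper never has to evaluate the integral near $z=0$ (the normalization is inherited from $\Gzor$) but must kill the polynomial ambiguity, while you have no ambiguity to worry about but must separately verify $\ord(I)=\sigma-1$ with leading coefficient $\vr\,!/(\sigma-1)!$; your rescaling $t_h=zU_h$ plus the telescoping Beta integral does this correctly, and is in effect the paper's Claim~\ref{Claim:G as integral 2} plus a Beta evaluation. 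Two small slips that do not affect the plan: a single differentiation does not replace $t_1$ by $z$ unless $\rho_0=0$ (you correct this later --- the boundary terms vanish for the first $\rho_0$ derivatives, and only the $(\rho_0+1)$-st produces $\rho_0!$ times the evaluated integrand), and your heuristic order count totals $\sigma$ rather than $\sigma-1$, though the rigorous substitution yields the correct order.
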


\begin{proof}[Mahler's Proof~\cite{Mahler}]
``This [Claim~\ref{Claim:G as DE}] can easily be brought to the following form.''
\end{proof}

This result allows one to produce to an efficient bound for $\Gzor$, and is thereby a lynchpin in applications. Other authors cite Mahler, or cite authors who cite Mahler. We did not find it easy, and hence indicate in some detail how to arrive at Mahler's conclusion.

\begin{proof}
We begin with the differential equation given in Claim~\ref{Claim:Differentials}:
    \[  (1-z)^{\omega_0+\rho_0+1}\left(\frac{d}{dz}\right)^{\rho_0+1} (1-z)^{-\omega_0} \Gzor 
	= \sG{z}{\langle \omega_1, \dots, \omega_M \rangle }{\langle \rho_1, \dots, \rho_M \rangle}. 
    \]
Hence,
    \begin{align*}
    \left(\frac{d}{dz}\right)^{\rho_0+1} (1-z)^{-\omega_0} \Gzor 
	&= (1-z)^{-(\omega_0+\rho_0+1)}
		\sG{z}{\langle \omega_1, \dots, \omega_M \rangle }{ \langle \rho_1, \dots, \rho_M \rangle} \\
	&= \sG{z}{\langle \omega_1, \dots, \omega_M\rangle-\omega_0-\rho_0-1}{\langle \rho_1, \dots, \rho_M \rangle},
    \end{align*}
where the second equality follows from Proposition~\ref{Claim:symmetry}.

We  observe that 
	\[
	\frac{d}{dz} \int_0^z \frac{(z-t)^k}{k!} f(t)\,dt 
		= \begin{cases} 
				f(z), & \text{$k=0$;} \\
				\int_0^z \frac{(z-t)^{k-1}}{(k-1)!} f(t)\,dt, & \text{$k>0$}.
		    \end{cases}
	\]
It then follows by repetition that
	\[
	\left(\frac{d}{dz} \right)^{\rho_0+1}\int_0^z \frac{(z-t)^{\rho_0}}{\rho_0!} f(t)\,dt = f(z).
	\]
Thus $(1-z)^{-\omega_0}\Gzor$ and $\int_0^z \frac{(z-t)^{\rho_0}}{\rho_0!} 
\sG{t}{\langle \omega_1, \dots, \omega_M\rangle-\omega_0-\rho_0-1}{\langle \rho_1, \dots, \rho_M \rangle}\, dt$ have the same $(\rho_0+1)$-th derivative. Therefore, they differ by a polynomial with degree at most $\rho_0$.

From the definition of $\Gzor$, we have
	\[
	\ord\left(\Gzor\right)
        = \rho_0+1+
            \ord\left(\sG{t}{\langle \omega_1, \dots, \omega_M\rangle-\omega_0-\rho_0-1}{\langle \rho_1, \dots, \rho_M \rangle} 
\right),
	\]
which dictates that the degree-at-most-$\rho_0$ polynomial is identically 0.
We can thus undo the differential operators as
	\[
	(1-z)^{-\omega_0}\Gzor
	= \int_0^z \frac{(z-t)^{\rho_0}}{\rho_0!} 
        \sG{t}{\langle \omega_1, \dots, \omega_M\rangle-\omega_0-\rho_0-1}{\langle \rho_1, \dots, \rho_M \rangle}\, dt,
	\]
whence
	\begin{equation}\label{equ:diff one step}
	\Gzor = \frac{(1-z)^{\omega_0}}{\rho_0!} \int_0^z (z-t)^{\rho_0} 
	\sG{t}{\langle \omega_1, \dots, \omega_M\rangle-\omega_0-\rho_0-1}{\langle \rho_1, \dots, \rho_M \rangle}\, dt.
	\end{equation}

We wish to apply equation~\eqref{equ:diff one step} inductively to express $\Gzor$ as an iterated integral. So that the notation will fit on the page, we define 
for $1\leq i\leq M$
	\begin{align*}
	S_0 &\coloneqq  0, \\
	S_i &\coloneqq  \omega_{i-1} + \rho_{i-1} + 1, \\
	G_0(z) &\coloneqq \Gzor, \\
	G_i(z) &\coloneqq \sG{z}{\langle \omega_i, \omega_{i+1},\dots,\omega_M \rangle -S_i }{ \langle \rho_i,\dots,\rho_M\rangle}.
	\end{align*}
Note that $G_M(z) = \frac{z^{\rho_M}}{\rho_M!}\, (1-z)^{\omega_M-S_M}$ by Claim~\ref{Claim:M=0}, while equation~\eqref{equ:diff one step} gives
	\[
	G_i(z) = \frac{(1-z)^{\omega_i-S_i}}{\rho_i!}  \int_0^z (z-t)^{\rho_{i}}G_{i+1}(t)\,dt
	\]
for $0\leq i < M$.
Now, iterating equation~\eqref{equ:diff one step} gives
	\begin{align*}
	\sG{t_0}{\vo}{\vr} &= G_0(t_0) \\
	  &= \frac{(1-t_0)^{\omega_0}}{\rho_0!} \int_0^{t_0} (t_0-t_1)^{\rho_0} G_1(t_1)\,dt_1 \\
	  &= \frac{(1-t_0)^{\omega_0}}{\rho_0!\rho_1!} \int_0^{t_0} (t_0-t_1)^{\rho_0} \cdot (1-t_1)^{\omega_1-S_1}\int_0^{t_1}(t_1-t_2)^{\rho_1} G_2(t_2)\,dt_2 \,dt_1 \\
	  &\,\,\,\vdots \\
	  &= \frac{(1-t_0)^{\omega_0}}{\rho_0!\cdots \rho_M!} \int_0^{t_0} \int_0^{t_1} \cdots \int_0^{t_{M-1}} \mathcal{G}(t_0,t_1,\dots,t_M)\,dt_M\cdots dt_2\,dt_1,\\
	\end{align*}
where
	\begin{align*}
	\mathcal{G}(t_0,t_1,\dots,t_M)
		&= \left(\prod_{h=0}^{M-1} (t_h-t_{h+1})^{\rho_h} \right)
			\left(\prod_{h=1}^{M-1} (1-t_h)^{\omega_h-S_h} \right) t_M^{\rho_M}(1-t_M)^{\omega_M-S_M}\\
		&= t_M^{\rho_M}
			\left(\prod_{h=1}^{M} \left( \frac{t_{h-1}-t_{h}}{1-t_{h}}\right)^{\rho_{h-1}}\right)
			\left(\prod_{h=1}^M (1-t_h)^{\omega_h-\omega_{h-1}-1} \right),
	\end{align*}
as claimed.
\end{proof}

\begin{claim}\label{Claim:G as integral 2}
The Padé remainder $\Gzor$ is given by the $M$-dimensional integral
        \[z^{\sigma-1}\, \frac{(1-z)^{\omega_0}}{\vr\,!} \int_{[0,1]^M}
                         \left( U_M^{-1}\prod_{h=1}^M U_h^{1+\rho_h} 
                         \left( \frac{1-u_h}{1-z U_h} \right)^{\rho_{h-1}} 
                         \left( 1- z U_h\right)^{\omega_h - \omega_{h-1}-1}\right)
                         \,d\vec u,
        \]
where $U_m = \prod_{h=1}^m u_h$.
\end{claim}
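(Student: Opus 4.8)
The plan is to derive this $M$-dimensional integral over the cube $[0,1]^M$ directly from the iterated integral in Claim~\ref{Claim:G as integral} (Theorem~\ref{thm:G}\ref{thm:G(i)}) by a sequence of substitutions that ``unfold'' the simplex-like domain $0 \le t_M \le t_{M-1} \le \cdots \le t_1 \le z$ into a product of unit intervals. The natural choice is to work from the innermost variable outward: set $t_M = t_{M-1} u_M$, then $t_{M-1} = t_{M-2} u_{M-1}$, and so on, finishing with $t_1 = z u_1$. Equivalently, and more symmetrically, one checks that this chain of substitutions amounts to $t_h = z U_h$ where $U_h = \prod_{j=1}^h u_j$, with each $u_h$ ranging over $[0,1]$ independently; the Jacobian of the transformation from $(t_1,\dots,t_M)$ to $(u_1,\dots,u_M)$ is a product of the $t_{h-1}$'s, which accounts for the $z^{\sigma-1}$ prefactor and the $U_M^{-1}\prod_h U_h^{1+\rho_h}$ combinatorial factor once one also absorbs the $t_M^{\rho_M}$ and the numerator powers $(t_{h-1}-t_h)^{\rho_{h-1}}$.

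The key steps, in order, are: (1) write down the iterated integral from Claim~\ref{Claim:G as integral} with the explicit integrand $\mathcal{G}(z,t_1,\dots,t_M)$; (2) perform the change of variables $t_h = z U_h$ ($1 \le h \le M$), carefully tracking that the region $0 \le t_M \le \cdots \le t_1 \le z$ corresponds exactly to $(u_1,\dots,u_M) \in [0,1]^M$, and compute the Jacobian, which I expect to be $z^M \prod_{h=1}^{M} U_{h-1}$ (with $U_0 = 1$) or a close variant; (3) rewrite each factor of $\mathcal{G}$ in terms of the $u_h$: the difference $t_{h-1} - t_h = z U_{h-1}(1 - u_h)$, the denominator $1 - t_h = 1 - z U_h$, the trailing factor $t_M^{\rho_M} = (z U_M)^{\rho_M}$, and the $(1-t_h)^{\omega_h - \omega_{h-1}-1} = (1 - zU_h)^{\omega_h-\omega_{h-1}-1}$; (4) collect all powers of $z$ and of the $U_h$, verify the total power of $z$ is $\sigma - 1 = \sum(\rho_m+1)-1$, and match the remaining $U$-exponents against the claimed $U_M^{-1}\prod_{h=1}^M U_h^{1+\rho_h}$.

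The bookkeeping in step (4) is where the real work lies, so let me sketch it. Before substitution the $z$-free part of $\mathcal{G}$ contributes $t_M^{\rho_M} \prod_h (t_{h-1}-t_h)^{\rho_{h-1}}$, which after substitution gives $z^{\rho_M} U_M^{\rho_M} \prod_{h=1}^M \big(z U_{h-1}(1-u_h)\big)^{\rho_{h-1}}$. Multiplying by the Jacobian's $z^M$ and its $U$-part, the total $z$-exponent becomes $M + \rho_M + \sum_{h=1}^{M}\rho_{h-1} = M + \sum_{m=0}^M \rho_m = \sigma - 1$, exactly as needed; the $U$-exponents will then need to be reorganized using the telescoping identity $\prod_h U_{h-1}^{a_h}$ versus $\prod_h U_h^{b_h}$ (each $U_h$ picks up exponents from every $u_j$ with $j \le h$), and this is precisely the step I expect to be the main obstacle — not because it is deep, but because getting the index shifts exactly right (the $\star m$-style off-by-one between $U_{h-1}$ and $U_h$, and the lone $U_M^{-1}$) is exactly the kind of place an error would hide. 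I would double-check it against the $M=1$ case, where both sides must reduce to $z^{\rho_0+\rho_1+1}\frac{(1-z)^{\omega_0}}{\rho_0!\rho_1!}\int_0^1 u_1^{\rho_1}(1-u_1)^{\rho_0}(1-zu_1)^{\omega_1-\omega_0-1}\,du_1$, and against the independent numerical checks the authors mention. An alternative route, should the direct substitution prove unwieldy, is to apply Claim~\ref{Claim:G as integral 2}'s claimed formula to the recursion $G_i(z) = \frac{(1-z)^{\omega_i-S_i}}{\rho_i!}\int_0^z (z-t)^{\rho_i} G_{i+1}(t)\,dt$ from the proof of Claim~\ref{Claim:G as integral} and verify it inductively on $M$, peeling off one $u_h$ integral at each stage; this trades the single multivariate Jacobian computation for $M$ one-dimensional substitutions $t = z u$ inside nested integrals, which may be less error-prone.
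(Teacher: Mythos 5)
Your approach is exactly the paper's: the paper proves this claim by the very same substitution $t_h = zU_h$ applied to Claim~\ref{Claim:G as integral}, with Jacobian $z^M\prod_{h=1}^{M-1}U_h$ (equal to your $z^M\prod_{h=1}^{M}U_{h-1}$ since $U_0=1$), and the bookkeeping you outline does close correctly, the $U$-exponents collapsing to $U_M^{\rho_M}\prod_{h=1}^{M-1}U_h^{\rho_h+1}=U_M^{-1}\prod_{h=1}^{M}U_h^{1+\rho_h}$ and the $z$-exponent to $\sigma-1$. One small slip, in the check rather than the derivation: for $M=1$ the exponent of $(1-zu_1)$ should be $\omega_1-\omega_0-\rho_0-1$ rather than $\omega_1-\omega_0-1$, because the factor $\bigl(\tfrac{1-u_1}{1-zu_1}\bigr)^{\rho_0}$ contributes an additional $-\rho_0$.
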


\begin{proof}
This follows from the previous claim upon the substitutions 
      \begin{equation*}
        t_h =z \, \prod_{i=1}^h u_i = z \, U_h, \qquad
        dt_M dt_{M-1} \cdots dt_2dt_1 = z^M \, \prod_{h=1}^{M-1} U_h d\vec u,
      \end{equation*}
and the obvious algebraic manipulations.
\end{proof}

\subsection{Contour Integrals and Derived Expressions}
Claim~\ref{Claim:G as contour integral} is Theorem~\ref{thm:G}\ref{thm:G(iii)}. Claim~\ref{Claim:H as a contour} is Theorem~\ref{thm:H}\ref{thm:H(i)}. Claim~\ref{Claim:H explicit} is Theorem~\ref{thm:H}\ref{thm:H(ii)}. Claim~\ref{Claim:H as gamma ratios} is Theorem~\ref{thm:H}\ref{thm:H(v)}. Claim~\ref{Claim:H as iterated integral} is Theorem~\ref{thm:H}\ref{thm:H(iii)}.
\begin{claim} \label{Claim:G as contour integral}
Let $\gamma$ be a simple positively oriented contour enclosing all $\sigma$ of the complex numbers $\omega_m+r$  ($0 \leq m \leq M, 0\leq r \leq 
\rho_m$). Then
	\[\Gzor = \frac{(-1)^{\sigma-1}}{2\pi i} \int_\gamma  (1-z)^\xi  \prod_{k=0}^M \frac{1}{\ff{(\xi-\omega_ k)}{\rho_ k+1}} \,d\xi.\]
\end{claim}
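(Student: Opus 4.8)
The plan is to verify that the contour integral on the right-hand side satisfies the three defining properties of $\Gzor$ from Definition~\ref{def:Pade notation}: it must have the form $\sum_m H_m(z)(1-z)^{\omega_m}$ with $\deg H_m \le \rho_m$, have a zero of order exactly $\sigma-1$ at $z=0$, and satisfy the normalization $\lim_{z\to 0} z^{-(\sigma-1)}(\cdot) = 1/(\sigma-1)!$. By the uniqueness in Theorem~\ref{thm:consistency}\ref{thm:consistency, uniqueness}, establishing these three facts identifies the integral with $\Gzor$. Throughout, write $F(\xi,z) := (1-z)^\xi \prod_{k=0}^M \ff{(\xi-\omega_k)}{\rho_k+1}^{-1}$ for the integrand; since the $\omega_k$ are distinct mod $\ZZ$, the function $\xi \mapsto F(\xi,z)$ is meromorphic with simple poles exactly at the $\sigma$ points $\omega_m + r$ ($0\le m\le M$, $0\le r\le \rho_m$), and $\gamma$ encloses all of them.

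First I would compute the integral by the residue theorem: it equals $(-1)^{\sigma-1}\sum_{m=0}^M\sum_{r=0}^{\rho_m} \operatorname{Res}_{\xi=\omega_m+r} F(\xi,z)$. At $\xi = \omega_m+r$, the factor $\ff{(\xi-\omega_m)}{\rho_m+1}^{-1}$ contributes the simple pole; its residue is $1/\big(\frac{d}{d\xi}\ff{(\xi-\omega_m)}{\rho_m+1}\big|_{\xi=\omega_m+r}\big)$. A short computation gives $\ff{(\xi-\omega_m)}{\rho_m+1} = \prod_{j=0}^{\rho_m}(\xi-\omega_m-j)$, whose derivative at $\xi=\omega_m+r$ is $\prod_{j\ne r}(r-j) = (-1)^{\rho_m-r} r!(\rho_m-r)!$, so the local residue contributes $(-1)^{\rho_m-r}/(r!(\rho_m-r)!) = (-1)^{\rho_m-r}\binom{\rho_m}{r}/\rho_m!$. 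The remaining factors evaluate to $(1-z)^{\omega_m+r}$ and $\prod_{k\ne m} \ff{(\omega_m+r-\omega_k)}{\rho_k+1}^{-1}$. Collecting, the $m$-th inner sum over $r$ produces $(1-z)^{\omega_m}$ times a polynomial in $(1-z)$ of degree $\rho_m$ in the variable $(1-z)$ — equivalently a polynomial $H_m(z)$ of degree at most $\rho_m$ — so the integral has the correct shape $\sum_m H_m(z)(1-z)^{\omega_m}$. (In fact one sees directly that $H_m(z)$ matches the formula of Theorem~\ref{thm:H}\ref{thm:H(ii)} after converting $\ff{}{}$ to $\rf{}{}$ and absorbing signs, but we need not invoke that.)

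Next I would establish the order of vanishing and the normalization by deforming $\gamma$ instead of summing residues. Expand $(1-z)^\xi = \sum_{n\ge 0}\frac{(-1)^n \ff{\xi}{n}}{n!}\cdots$ — more precisely use $(1-z)^\xi = e^{\xi\log(1-z)}$ and note $\log(1-z) = -z + O(z^2)$ — to see that for fixed $\xi$ on $\gamma$ the integrand is analytic in $z$ near $0$; interchanging sum and integral (justified by uniform convergence on the compact contour) gives the Maclaurin coefficients of the integral as $\frac{(-1)^{\sigma-1}}{2\pi i}\int_\gamma \frac{(\log(1-z)\text{-expansion coefficients})}{\prod_k \ff{(\xi-\omega_k)}{\rho_k+1}}\,d\xi$. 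The key mechanism is that $\int_\gamma \xi^j \prod_k \ff{(\xi-\omega_k)}{\rho_k+1}^{-1}\,d\xi = 0$ for $0 \le j \le \sigma-2$, because the integrand then decays like $\xi^{j-\sigma} = O(\xi^{-2})$ at infinity so the contour may be pushed to infinity, while for $j = \sigma-1$ the integrand behaves like $\xi^{-1}$ and the integral equals $2\pi i$. Since the coefficient of $z^n$ in $(1-z)^\xi$ is a polynomial in $\xi$ of degree exactly $n$ with leading term $(-1)^n\xi^n/n!$, this forces the $z^n$-coefficient of the integral to vanish for $n \le \sigma-2$ and to equal $(-1)^{\sigma-1}\cdot(-1)^{\sigma-1}/((\sigma-1)!\cdot 2\pi i)\cdot 2\pi i = 1/(\sigma-1)!$ for $n=\sigma-1$. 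This gives both $\ord \ge \sigma-1$ with the right normalization; that the order is exactly $\sigma-1$ then follows since the leading coefficient $1/(\sigma-1)!$ is nonzero.

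The main obstacle I anticipate is making the ``push the contour to infinity'' argument rigorous and keeping track of the single-valued branch of $(1-z)^\xi$: one must fix $\log(1-z)$ to be the principal branch (valid on $\CC\setminus[1,\infty)$, consistent with Theorem~\ref{thm:consistency}\ref{thm:consistency, domain}), and when enlarging $\gamma$ to a large circle one needs the estimate $|\prod_k \ff{(\xi-\omega_k)}{\rho_k+1}^{-1}| = O(|\xi|^{-\sigma})$ uniformly, together with control of $|(1-z)^\xi| = |1-z|^{\operatorname{Re}\xi} e^{-\operatorname{Im}\xi\,\arg(1-z)}$ on that circle — harmless for the coefficient-extraction step since there we have already stripped off the $z$-dependence and are integrating a rational function, but worth stating carefully. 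Once the vanishing-of-moments lemma is in hand, everything else is the bookkeeping sketched above, and the identification with $\Gzor$ is immediate from uniqueness.
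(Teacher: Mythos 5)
Your argument is correct, and it reaches the claim by a genuinely different route than the paper. The paper identifies the contour integral with $\Gzor$ through the differential-equation characterization: it applies the operators $D_i$ of Claim~\ref{Claim:G as DE} under the integral sign (each application cancels one factor of the product), telescopes down to the $M=0$ case, evaluates that case by partial fractions and Cauchy's formula to get $\frac{z^{\rho_M}}{\rho_M!}(1-z)^{\omega_M}$, and then checks the initial conditions by the inversion substitution $\xi\mapsto N^2/u$ on a large circle; uniqueness of the solution of the DE finishes the proof. You instead verify the defining properties of Definition~\ref{def:Pade notation} directly: the residue expansion shows the integral already has the admissible shape $\sum_m H_m(z)(1-z)^{\omega_m}$ with $\deg H_m\le\rho_m$ (this computation is essentially the one the paper performs later, in Claims~\ref{Claim:H as a contour} and~\ref{Claim:H explicit}, after the contour formula for $\Gzor$ is in hand), and the vanishing of the Taylor coefficients up to order $\sigma-2$ together with the normalization follows from your moment lemma, which is the same fact the paper proves for its initial conditions, only obtained by a decay estimate at infinity rather than the inversion trick; you then invoke the uniqueness in Theorem~\ref{thm:consistency}\ref{thm:consistency, uniqueness}. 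What your route buys is independence from the differential-operator machinery and, as a byproduct, the explicit residue data that later yields the formulas for $\Hzor$; what the paper's route buys is reuse of the DE framework it has already built (and uses again), at the cost of deferring the residue computation. The points you flag — simplicity of the poles (guaranteed by the no-integer-difference hypothesis), the principal branch of $(1-z)^\xi$, and uniform convergence justifying the termwise integration for $|z|\le\rho<1$ — are exactly the details that need stating, and your treatment of them is adequate.
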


\begin{proof}
Set $$\sI{z}{\vo}{\vr} \coloneqq \frac{(-1)^{\sigma-1}}{2\pi i} \int_\gamma  (1-z)^\xi  \prod_{k=0}^M \frac{1}{\ff{(\xi-\omega_ k)}{\rho_ k+1}} \,d\xi,$$
and, as in Claim~\ref{Claim:G as DE},
    $$D_i\coloneqq  (1-z)^{\omega_i+\rho_i+1}\left(\frac{d}{dz}\right)^{\rho_i+1} (1-z)^{-\omega_i}.$$
We will show that 
    \[D_0 \sI{z}{\vo}{\vr}= \sI{z}{\vo^{\ast 0}}{\vo^{\ast 0}}.\]

Substituting yields
    \begin{multline*}
      D_0 \sI{z}{\vo}{\vr}
      =(1-z)^{\omega_0+\rho_0+1} \left(\frac{d}{dz}\right)^{\rho_0+1} (1-z)^{-\omega_0} \sI{z}{\vo}{\vr}\\
          =(1-z)^{\omega_0+\rho_0+1} \left(\frac{d}{dz}\right)^{\rho_0+1} 
                            \frac{(-1)^{\sigma-1}}{2\pi i} \int_\gamma  (1-z)^{\xi-\omega_0}  \prod_{k=0}^M \frac{1}{\ff{(\xi-\omega_ k)}{\rho_ k+1}} \,d\xi.
     \end{multline*}
As
    \[\left(\frac{d}{dz}\right)^{\rho_0+1} (1-z)^{\xi-\omega_0}  = (-1)^{\rho_0+1}
    \ff{(\xi-\omega_0)}{\rho_0+1}(1-z)^{\xi-\omega_0-\rho_0-1},\]
differentiating under the integral eliminates the $k=0$ factor in the product, giving
    \[D_0 \sI{z}{\vo}{\vr}
          =  \frac{(-1)^{\sigma-\rho_0-2}}{2\pi i}
                      \int_\gamma  (1-z)^{\xi} \prod_{k=1}^M \frac{1}{\ff{(\xi-\omega_ k)}{\rho_ k+1}} \,d\xi
          =\sI{z}{\vo^{\ast 0}}{\vr^{\ast 0}}.\]

We iterate, using $D_1,\dots,D_{M-1}$ successively to remove all but the  final coordinates of $\vo,\vr$, arriving at
    $$D_{M-1}\cdots D_1 D_0 \sI{z}{\vo}{\vr} 
    = \sI{z}{\langle \omega_M \rangle}{\langle \rho_M\rangle} =\frac{(-1)^{\rho_M}}{2\pi i} \int_\gamma  
     \frac{(1-z)^\xi }{\ff{(\xi-\omega_M)}{\rho_M+1}} \,d\xi .$$ 
By partial fractions~\cite{ConcreteMathematics}*{equation (5.41) in Section 5.3}
    \[ \frac{(-1)^{\rho_M}}{\ff{(\xi-\omega_M)}{\rho_M+1}}
        = \frac{1}{\rho_M!}\sum_{r=0}^{\rho_M} \frac{(-1)^r}{\xi-\omega_M-r}\binom{\rho_M}{r},\]
and with Cauchy's Integral Formula  we conclude
    \begin{align*}
    \frac{(-1)^{\rho_M}}{2\pi i} \int_\gamma  
         \frac{(1-z)^\xi }{\ff{(\xi-\omega_M)}{\rho_M+1}} \,d\xi 
      &=\frac{1}{2\pi i} \int_\gamma \frac{(1-z)^\xi}{\rho_M!}  \sum_{r=0}^{\rho_M} 
         \frac{(-1)^r}{\xi-\omega_M-r}\binom{\rho_M}{r}\,d\xi  \\     
      &=\frac{1}{\rho_M!} \sum_{r=0}^{\rho_M}\binom{\rho_M}{r} (-1)^r \frac{1}{2\pi i} \int_\gamma
         \frac{(1-z)^\xi}{\xi-\omega_M-r}\,d\xi  \\     
      &=\frac{1}{\rho_M!} \sum_{r=0}^{\rho_M}\binom{\rho_M}{r} (-1)^r  (1-z)^{\omega_M+r}  \\
      &=\frac{(1-z)^{\omega_M}}{\rho_M!} \sum_{r=0}^{\rho_M}\binom{\rho_M}{r}(z-1)^r  \\        
      &=\frac{(1-z)^{\omega_M}}{\rho_M!}\,z^{\rho_M}.
    \end{align*}
Thus,     $\sI{z}{\vo}{\vr} $ satisfies the DE in  Claim~\ref{Claim:G as DE}. We now show that it also satisfies the initial conditions given there, and so by Claim~\ref{Claim:G as DE} we will have $\sI{z}{\vo}{\vr} = \Gzor$.
 
As for the initial conditions, it remains to show that $\left. \frac{d^r}{dz^r} \sI{z}{\vo}{\vr}\right|_{z=0} = 0$ for $0\leq r \leq \sigma-2$, and for $r=\sigma-1$ we get $1$. We start with
        \[
          \frac{d^r}{dz^r} \sI{z}{\vo}{\vr}
            = \frac{(-1)^{\sigma-1}}{2\pi i} \int_\gamma  (-1)^r \ff{\xi}{r} (1-z)^{\xi-r}  \prod_{k=0}^M \frac{1}{\ff{(\xi-\omega_ k)}{\rho_ k+1}} \,d\xi \]
and evaluating this at $z=0$ gives
        \begin{equation} \label{equ: contour after diff}
         \frac{(-1)^{\sigma-r-1}}{2\pi i} \int_\gamma \frac{\ff{\xi}{r}}{\prod_{k=0}^M {\ff{(\xi-\omega_ k)}{\rho_ k+1}}} \, d\xi.
         \end{equation}
We may take $\gamma$ to be a circle with large radius $N$, where $N>\omega_k+r$ for $0\leq k \leq M$ and $0\leq r \leq \rho_k+1$. We now appeal to an argument that has little to do with our particular integrand, and so we generalize. Let $P(\xi)=\prod (\xi-p_j)$ be a monic polynomial of degree $r$, and let $Q(\xi)=\prod (\xi - q_j)$ be a monic polynomial of degree $\sigma$ with all of its roots inside $|\xi|=N$. Then, using the substitution $\xi \mapsto N^2/u$, which reverses the orientation of the contour,
	\begin{align*}
	\frac{1}{2\pi i} \int_{|\xi|=N} \frac{P(\xi)}{Q(\xi)}\,d\xi
	&= \frac{1}{2\pi i} \int_{|u|=N} \frac{P(N^2/u)}{Q(N^2/u)} \, \frac{N^2}{u^2}\, du \\
	&= \frac{N^2}{2\pi i} \int_{|u|=N} \frac{\prod (N^2/u - p_j)}{\prod (N^2/u-q_j)} \, \frac{du}{u^2} \\
	&= \frac{N^2}{2\pi i} \int_{|u|=N} \frac{ \prod (N^2 - u p_j)}{\prod (N^2 - u q_j)} u^{\sigma-r-2}\,du.
	\end{align*}
As all the roots of the denominator $\prod (N^2-u q_j)$ are outside the contour, this integral is 0 provided that $\sigma-r-2\geq 0$, that is, provided $r \leq \sigma-2$. If $r=\sigma-1$, then 
 	\begin{multline*}
	\frac{1}{2\pi i} \int_{|\xi|=N} \frac{P(\xi)}{Q(\xi)}\,d\xi
	= \frac{N^2}{2\pi i} \int_{|u|=N} \frac{ \prod (N^2 - u p_j)}{\prod (N^2 - u q_j)} u^{\sigma-r-2}\,du \\
	= N^2 \cdot \frac{\prod (N^2 - 0 \cdot p_j)}{\prod (N^2 - 0 \cdot q_j)} 
	= N^2 \cdot \frac{(N^2)^{r}}{(N^2)^\sigma} 
	=1.
	\end{multline*}
\end{proof}

\begin{claim} \label{Claim:H as a contour}
Let $\gamma_m$ be a simple positively oriented contour enclosing all $\rho_m+1$ of the complex numbers $\omega_m+r$ ($0\leq r \leq \rho_m$) and 
none of $\omega_k+r$  ($0 \leq k \leq M, k \neq m, 0\leq r \leq \rho_m$). Then
	\[
	\Hzor = \frac{(-1)^{\sigma-1}}{2\pi i} \int_{\gamma_m} (1-z)^{\xi-\omega_m}  \prod_{k=0}^M \frac{1}{\ff{(\xi-\omega_ k)}{\rho_ k+1}} \,d\xi .
	\]
\end{claim}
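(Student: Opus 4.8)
The plan is to run the argument of Claim~\ref{Claim:G as contour integral} "one contour at a time": split the big contour $\gamma$ appearing there into the pieces $\gamma_m$, recognize each piece as a polynomial in $z$, and then invoke uniqueness of the Padé approximants. Write $I_m(z)$ for the asserted right-hand side, and set
\[
\Phi(\xi,z)\coloneqq (1-z)^{\xi}\prod_{k=0}^M \frac{1}{\ff{(\xi-\omega_k)}{\rho_k+1}},
\]
so that $(1-z)^{\omega_m}I_m(z)=\tfrac{(-1)^{\sigma-1}}{2\pi i}\int_{\gamma_m}\Phi(\xi,z)\,d\xi$. For fixed $z\in\CC\setminus[1,\infty)$ the function $\Phi(\cdot,z)$ is meromorphic in $\xi$, with poles precisely at the $\sigma$ numbers $\omega_k+r$ ($0\le k\le M$, $0\le r\le\rho_k$); since no two $\omega_k$ differ by an integer, each such pole is simple and the $M+1$ clusters $\{\omega_k,\omega_k+1,\dots,\omega_k+\rho_k\}$ are pairwise disjoint. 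Hence $\gamma_0,\dots,\gamma_M$ may be taken pairwise disjoint, with $\gamma_m$ encircling exactly the $m$-th cluster, so that $\gamma_0+\dots+\gamma_M$ is homologous, in $\CC$ minus the $\sigma$ poles, to a single contour $\gamma$ of the type allowed in Claim~\ref{Claim:G as contour integral}. Summing over $m$ then gives $\sum_{m=0}^M\int_{\gamma_m}\Phi(\xi,z)\,d\xi=\int_\gamma\Phi(\xi,z)\,d\xi$, i.e.
\[
\sum_{m=0}^M (1-z)^{\omega_m}\,I_m(z)=\Gzor.
\]

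Next I would check that each $I_m$ is a polynomial in $z$ of degree at most $\rho_m$. Evaluating $I_m(z)$ by residues, the only singularities inside $\gamma_m$ are the simple poles $\xi=\omega_m+r$, $0\le r\le\rho_m$; at each of these the factors $1/\ff{(\xi-\omega_k)}{\rho_k+1}$ with $k\ne m$ are holomorphic, so the residue of $(1-z)^{\xi-\omega_m}\prod_{k=0}^M 1/\ff{(\xi-\omega_k)}{\rho_k+1}$ there equals $(1-z)^{r}$ times a constant independent of $z$ (which is, moreover, explicitly computable). Summing over $0\le r\le\rho_m$ expresses $I_m(z)$ as a $\CC$-linear combination of $1,(1-z),\dots,(1-z)^{\rho_m}$, hence a polynomial of degree at most $\rho_m$. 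We have therefore produced polynomials $I_0,\dots,I_M$ of degrees at most $\rho_0,\dots,\rho_M$ with $\sum_m I_m(z)(1-z)^{\omega_m}=\Gzor=\sum_m\Hzor\,(1-z)^{\omega_m}$, and the uniqueness statement established earlier (a consequence of Claim~\ref{Claim:G not 0}) forces $I_m=\Hzor$ for every $m$, which is the claim.

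I expect the one slightly delicate point to be the contour-homology step — asserting that the $\gamma_m$ can be chosen so that their sum is homologous to an admissible $\gamma$ — and this is exactly where the hypothesis that no two $\omega_k$ differ by an integer is used, since it guarantees the $M+1$ clusters of poles (and hence the small contours encircling them) can be separated. The residue evaluation and the appeal to uniqueness are then routine once that is in place.
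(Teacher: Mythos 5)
Your proposal is correct and follows essentially the same route as the paper: both expand the contour integral for $\Gzor$ from Claim~\ref{Claim:G as contour integral} into residues at the poles $\omega_m+r$, recognize the coefficient of $(1-z)^{\omega_m}$ (equivalently your $I_m$) as a polynomial of degree at most $\rho_m$, and conclude by the uniqueness of the Padé approximants. The only cosmetic difference is that the paper decomposes pole by pole (contours $\gamma_{r,m}$) and then evaluates the $\gamma_m$ integral separately, whereas you group the poles into the $M+1$ clusters at once.
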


\begin{proof}
As no pair of the $\omega_i$ has a difference that is an integer, the $\sigma$ numbers $\omega_m+r$, where $0\leq m \leq M, 0\leq r \leq \rho_m$, are distinct.
Set
	\[
	\Phi_{r,m}(\xi) \coloneqq  (\xi-\omega_m-r)  \prod_{k=0}^M \frac{1}{\ff{(\xi-\omega_ k)}{\rho_ k+1}},
	\]
where we understand the removable singularity to be removed. Observe that each $\Phi_{r,m}$ has $\sigma-1$ simple poles.
We will evaluate $\Gzor$ using Cauchy's Integral Formula.
Let $\gamma_{r,m}$ be a simple closed contour enclosing $\omega_m+r$, but none of the roots of $\Phi_{r,m}$. From Claim~\ref{Claim:G as contour integral}, we find that
	\begin{align*}
	\Gzor
	&= \frac{(-1)^{\sigma-1}}{2\pi i} \int_\gamma  (1-z)^\xi  \prod_{k=0}^M \frac{1}{\ff{(\xi-\omega_ k)}{\rho_ k+1}} \,d\xi \\
	&= (-1)^{\sigma-1}\, 
		\sum_{m=0}^M \sum_{r=0}^{\rho_m} \frac{1}{2\pi i} \int_{\gamma_{r,m}}  \frac{(1-z)^\xi \Phi_{r,m}(\xi)}{\xi-\omega_m-r} \,d\xi \\
	&= (-1)^{\sigma-1}\, \sum_{m=0}^M \sum_{r=0}^{\rho_m} (1-z)^{\omega_m+r}\Phi_{r,m}(\omega_m+r) \\
	&=\sum_{m=0}^M  \left( (-1)^{\sigma-1} \sum_{r=0}^{\rho_m} (1-z)^r \, \Phi_{r,m}(\omega_m+r) \right) (1-z)^{\omega_m} .
	\end{align*}
We now notice that
	\begin{equation} \label{equ:Hzor as sum phi}
	 \Hzor =  (-1)^{\sigma-1}\, \sum_{r=0}^{\rho_m}  (1-z)^r \Phi_{r,m}(\omega_m+r),
	 \end{equation}
as this is a polynomial of the required degree.

Also,
	\begin{align*}
	\frac{(-1)^{\sigma-1}}{2\pi i} \int_{\gamma_m} &(1-z)^{\xi-\omega_m} \prod_{k=0}^M \frac{1}{\ff{(\xi-\omega_ k)}{\rho_ k+1}} \,d\xi \\
	&= (-1)^{\sigma-1} \sum_{r=0}^{\rho_m} \frac{1}{2\pi  i} \int_{\gamma_{m,r}} \frac{(1-z)^{\xi-\omega_m} \Phi_{r,m}(\xi)}{\xi-\omega_m-r}\,d\xi \\
	&= (-1)^{\sigma-1} \sum_{r=0}^{\rho_m} (1-z)^{(\omega_m+r)-\omega_m}\Phi_{r,m}(\omega_m+r) \\
	&= (-1)^{\sigma-1} \sum_{r=0}^{\rho_m} (1-z)^r \Phi_{r,m}(\omega_m+r) \\
	&= \Hzor.
	\end{align*}
\end{proof}

\begin{claim} \label{Claim:H explicit}
	\(\displaystyle \Hzor = \frac{1}{\rho_m!}\,\sum_{r=0}^{\rho_m} (z-1)^r \binom{\rho_m}{r} 
		\prod_{\substack{k=0\\ k\neq m}}^M \frac{1}{\rf{(\omega_k-\omega_m-r)}{\rho_ k+1}} . \)
\end{claim}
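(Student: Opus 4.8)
The plan is to derive this explicit formula directly from the contour-integral representation of $\Hzor$ established in Claim~\ref{Claim:H as a contour}, together with the residue bookkeeping already carried out in its proof. Recall that equation~\eqref{equ:Hzor as sum phi} gives
\[
\Hzor = (-1)^{\sigma-1} \sum_{r=0}^{\rho_m} (1-z)^r \,\Phi_{r,m}(\omega_m+r),
\]
where $\Phi_{r,m}(\xi) = (\xi-\omega_m-r)\prod_{k=0}^M \big(\ff{(\xi-\omega_k)}{\rho_k+1}\big)^{-1}$ with the removable singularity at $\xi=\omega_m+r$ removed. So the entire problem reduces to evaluating the single number $\Phi_{r,m}(\omega_m+r)$ in closed form and then checking that the resulting expression matches the claimed one, including the sign.

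First I would split the product defining $\Phi_{r,m}$ into the $k=m$ factor and the $k\neq m$ factors. For $k=m$, the factor $(\xi-\omega_m-r)/\ff{(\xi-\omega_m)}{\rho_m+1}$ has a removable singularity at $\xi=\omega_m+r$; writing $\ff{(\xi-\omega_m)}{\rho_m+1} = \prod_{j=0}^{\rho_m}(\xi-\omega_m-j)$, the factor $(\xi-\omega_m-r)$ cancels the $j=r$ term, and evaluating the remaining product at $\xi=\omega_m+r$ gives $\prod_{j\neq r}(r-j)^{-1}$ over $0\le j\le\rho_m$, which is $(-1)^{\rho_m-r}/(r!\,(\rho_m-r)!) = (-1)^{\rho_m-r}\binom{\rho_m}{r}/\rho_m!$. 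For $k\neq m$, evaluating $1/\ff{(\xi-\omega_k)}{\rho_k+1}$ at $\xi=\omega_m+r$ gives $1/\ff{(\omega_m+r-\omega_k)}{\rho_k+1}$; using the identity $\ff{x}{\rho_k+1} = (-1)^{\rho_k+1}\rf{(-x)}{\rho_k+1}$ from the ``More Notation'' section with $x = \omega_m+r-\omega_k$ turns this into $(-1)^{\rho_k+1}/\rf{(\omega_k-\omega_m-r)}{\rho_k+1}$.

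Next I would assemble these pieces. Collecting the signs $(-1)^{\sigma-1}$ from \eqref{equ:Hzor as sum phi}, the $(-1)^{\rho_m-r}$ from the $k=m$ factor, and the $\prod_{k\neq m}(-1)^{\rho_k+1} = (-1)^{\sigma - (\rho_m+1) - (M - \text{count})}$—more carefully, $\sum_{k\neq m}(\rho_k+1) = \sigma - (\rho_m+1)$, so $\prod_{k\neq m}(-1)^{\rho_k+1} = (-1)^{\sigma-\rho_m-1}$—the total sign on the $r$-th term is $(-1)^{\sigma-1}\cdot(-1)^{\rho_m-r}\cdot(-1)^{\sigma-\rho_m-1} = (-1)^{2\sigma - 2 - r} = (-1)^{-r} = (-1)^r$. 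This absorbs into $(1-z)^r\cdot(-1)^r = (z-1)^r$, which is exactly the form appearing in the claim. The $1/\rho_m!$ and $\binom{\rho_m}{r}$ and $\prod_{k\neq m}\rf{(\omega_k-\omega_m-r)}{\rho_k+1}^{-1}$ factors are then in place, matching the stated formula term by term.

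The main obstacle is purely the sign-and-index bookkeeping: making sure the parity count $\prod_{k\neq m}(-1)^{\rho_k+1} = (-1)^{\sigma-\rho_m-1}$ is done correctly (it relies on $\sigma = \sum_{k=0}^M(\rho_k+1)$), and that the cancellation of the removable singularity in the $k=m$ factor is handled without an off-by-one error in which term of the falling factorial is removed. Everything else is a routine application of the two elementary falling/rising factorial identities quoted in the paper, plus the residue evaluation already set up in the proof of Claim~\ref{Claim:H as a contour}; no new analytic input is needed.
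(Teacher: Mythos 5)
Your proposal is correct and follows essentially the same route as the paper's own proof: both start from the residue expansion $\Hzor = (-1)^{\sigma-1}\sum_{r=0}^{\rho_m}(1-z)^r\,\Phi_{r,m}(\omega_m+r)$ of Claim~\ref{Claim:H as a contour}, evaluate the $k=m$ factor via $\prod_{r'\neq r}(r-r')=(-1)^{\rho_m-r}r!\,(\rho_m-r)!$ and the $k\neq m$ factors via $\ff{x}{\rho_k+1}=(-1)^{\rho_k+1}\rf{(-x)}{\rho_k+1}$, and collect signs to turn $(1-z)^r$ into $(z-1)^r$. Your sign bookkeeping, including $\prod_{k\neq m}(-1)^{\rho_k+1}=(-1)^{\sigma-\rho_m-1}$, matches the paper's computation exactly.
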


\begin{proof}
We continue with the notation of the proof of Claim~\ref{Claim:H as a contour}. In particular, we simplify the expression~\eqref{equ:Hzor as sum phi}. Observe that
	\[
	\Phi_{r,m}(\xi) = 
		\left[ \prod_{\substack{k=0\\ k\neq m}}^M \frac{1}{\ff{(\xi-\omega_ k)}{\rho_ k+1}} \right]
		\cdot \left[ \prod_{\substack{r'=0 \\ r' \neq  r}}^{\rho_m}  \frac{1}{\xi - \omega_m-r'} \right]
	\]
so that
	\[\Phi_{r,m}(\omega_m+r) 
	= \left[ \prod_{\substack{k=0\\ k\neq m}}^M \frac{1}{\ff{(\omega_m-\omega_ k+r)}{\rho_ k+1}} \right]
		\cdot \left[ \prod_{\substack{r'=0 \\ r' \neq  r}}^{\rho_m}  \frac{1}{r-r'} \right].
		\]
Now,
	\[
	\prod_{\substack{r'=0 \\ r' \neq  r}}^{\rho_m} (r-r') = (r)(r-1)\cdots (2)(1)(-1)(-2) \cdots (r-\rho_m) = (-1)^{r-\rho_m} \, r! \, (\rho_m-r)!,
	\]
so that
	\[\frac{1}{\prod_{\substack{r'=0 \\ r' \neq  r}}^{\rho_m} (r-r')} = \frac{(-1)^{r-\rho_m}}{\rho_m!} \, \binom{\rho_m}{r} .\]
Also,
	\begin{multline*}
	\prod_{\substack{k=0\\ k\neq m}}^M \ff{(\omega_m-\omega_ k+r)}{\rho_ k+1} 
	= \prod_{\substack{k=0\\ k\neq m}}^M (-1)^{\rho_k+1} \rf{(\omega_k-\omega_m-r)}{\rho_k+1}\\
	= (-1)^{\sigma-\rho_m-1}  \prod_{\substack{k=0\\ k\neq m}}^M \rf{(\omega_k-\omega_m-r)}{\rho_k+1}.
	\end{multline*}
We now have $\Hzor$ as claimed.
\end{proof}

\begin{claim} \label{Claim:H as gamma ratios}
Set $W\coloneqq W(m,k)=\omega_k-\omega_m$, and define $C_{m,k,r}$ by
\[C_{m,k,r} \coloneqq  \dbinom{\rho_k}{r},\]
if $m=k$, by
\[C_{m,k,r} \coloneqq
(-1)^{\rho_k+1} \dbinom{r}{\rho_k}^{-1} \dfrac{\Gamma(r+1)}{\Gamma(r+1-W)} \dfrac{\Gamma(r-\rho_k-W)}{\Gamma(r-\rho_k+1)}\]
if $m\neq k$ and $\rho_k<r$, and  by
\[C_{m,k,r} \coloneqq (-1)^r \dbinom{\rho_k}{r} \dfrac{\Gamma(r+1)}{\Gamma(r+1-W)} \dfrac{\Gamma(\rho_k-r+1)}{\Gamma(\rho_k-r+1+W)}\dfrac{\pi}{\sin(\pi W)}\]
if $m\neq k$ and $\rho_k \geq r$.
Then, we have
\[
\Hzor = \frac{1}{\vr\,!} \sum_{r=0}^{\rho_m} (z-1)^r \prod_{k=0}^M C_{m,k,r}.
\]
\end{claim}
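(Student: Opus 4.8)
\emph{Proof proposal.} The plan is to deduce this directly from Claim~\ref{Claim:H explicit}, which already expresses $\Hzor$ as $\frac{1}{\rho_m!}\sum_{r=0}^{\rho_m}(z-1)^r\binom{\rho_m}{r}\prod_{k\neq m}\rf{(\omega_k-\omega_m-r)}{\rho_k+1}^{-1}$. Since $\vr\,!=\rho_m!\prod_{k\neq m}\rho_k!$, pulling the factor $\prod_{k\neq m}\rho_k!$ into the product shows that the asserted identity is equivalent, term by term in $r$ and factor by factor in $k$, to the two statements $C_{m,m,r}=\binom{\rho_m}{r}$ (which is the definition) and
\[
C_{m,k,r}=\frac{\rho_k!}{\rf{(\omega_k-\omega_m-r)}{\rho_k+1}}=\rho_k!\,\frac{\Gamma(W-r)}{\Gamma(W-r+\rho_k+1)}\qquad(k\neq m),
\]
where $W=W(m,k)=\omega_k-\omega_m$ and we used $\rf{x}{n}=\Gamma(x+n)/\Gamma(x)$. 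So everything reduces to checking that each of the two case formulas for $C_{m,k,r}$ equals $\rho_k!\,\Gamma(W-r)/\Gamma(W-r+\rho_k+1)$.

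For the case $\rho_k<r$, I would substitute $\binom{r}{\rho_k}^{-1}=\rho_k!\,(r-\rho_k)!/r!=\rho_k!\,\Gamma(r-\rho_k+1)/\Gamma(r+1)$; the factors $\Gamma(r+1)$ and $\Gamma(r-\rho_k+1)$ then cancel, leaving $(-1)^{\rho_k+1}\rho_k!\,\Gamma(r-\rho_k-W)/\Gamma(r+1-W)$. Since $\Gamma(r-\rho_k-W)/\Gamma(r+1-W)=1/\rf{(r-\rho_k-W)}{\rho_k+1}$, and the $\rho_k+1$ factors of $\rf{(r-\rho_k-W)}{\rho_k+1}$ are exactly the negatives of the factors of $\rf{(W-r)}{\rho_k+1}$, we have $\rf{(r-\rho_k-W)}{\rho_k+1}=(-1)^{\rho_k+1}\rf{(W-r)}{\rho_k+1}$; the two signs cancel and we land on the target. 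For the case $\rho_k\geq r$, I would substitute $\binom{\rho_k}{r}=\rho_k!/(r!\,(\rho_k-r)!)=\rho_k!/(\Gamma(r+1)\Gamma(\rho_k-r+1))$, cancel $\Gamma(r+1)$ and $\Gamma(\rho_k-r+1)$, and observe that the surviving denominator factor $\Gamma(\rho_k-r+1+W)$ is already $\Gamma(W-r+\rho_k+1)$. It then remains only to rewrite $\frac{1}{\Gamma(r+1-W)}\cdot\frac{\pi}{\sin(\pi W)}$: Euler's reflection formula gives $\Gamma(W-r)\Gamma(1-(W-r))=\pi/\sin(\pi(W-r))$, and since $1-(W-r)=r+1-W$ and $\sin(\pi(W-r))=(-1)^r\sin(\pi W)$, this yields $\frac{1}{\Gamma(r+1-W)}\cdot\frac{\pi}{\sin(\pi W)}=(-1)^r\Gamma(W-r)$. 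The leading $(-1)^r$ in the displayed formula cancels, and we again reach $\rho_k!\,\Gamma(W-r)/\Gamma(W-r+\rho_k+1)$.

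I do not expect a genuine obstacle. Because $W\notin\ZZ$, every $\Gamma$ appearing with a $W$ in its argument ($W-r$, $r+1-W$, $r-\rho_k-W$, $\rho_k-r+1+W$) avoids the poles of $\Gamma$, and $\sin(\pi W)\neq0$, so all the manipulations above are legitimate; the case split $\rho_k<r$ versus $\rho_k\geq r$ is purely cosmetic, selecting which of two provably equal rearrangements to display (each written so that the integer-argument $\Gamma$'s are factorials of nonnegative integers). The only two nontrivial ingredients are the elementary factor-reversal identity $\rf{(r-\rho_k-W)}{\rho_k+1}=(-1)^{\rho_k+1}\rf{(W-r)}{\rho_k+1}$ and Euler's reflection formula, and the remaining work is sign bookkeeping, which is moreover of the kind that can be (and, per the introduction, has been) checked numerically for random parameters.
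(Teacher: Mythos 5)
Your proposal is correct and follows essentially the same route as the paper: both reduce to Claim~\ref{Claim:H explicit}, absorb $\prod_{k\neq m}\rho_k!$ into the product, and verify factor by factor that $\rho_k!/\rf{(\omega_k-\omega_m-r)}{\rho_k+1}=C_{m,k,r}$ in the two cases via Gamma-ratio rewriting of binomials and factorials, the sign-reversal identity for the rising factorial, and Euler's reflection formula. The only difference is the direction of the computation (you simplify $C_{m,k,r}$ down to the target rather than building it up), which is immaterial.
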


\begin{proof}
We begin from Claim~\ref{Claim:H explicit}, writing $W$ in place of $\omega_k-\omega_m$:
\begin{align*}
\Hzor &= \frac{1}{\rho_m!} \sum_{r=0}^{\rho_m} (z-1)^r \binom{\rho_m}{r} \prod_{\substack{k=0 \\ k \neq m}}^M \frac{1}{\rf{(W-r)}{\rho_ k+1}} \\
  &= \frac{1}{\vr\,!} \sum_{r=0}^{\rho_m} (z-1)^r \binom{\rho_m}{r} \prod_{\substack{k=0 \\ k \neq m}}^M \frac{\rho_k!}{\rf{(W-r)}{\rho_ k+1}}.
\end{align*}

If $k\neq m$ and $\rho_k < r$, then 
  \[\rho_k! = \binom{r}{\rho_k}^{-1} \frac{r!}{(r-\rho_k)!} = \binom{r}{\rho_k}^{-1} \frac{\Gamma(r+1)}{\Gamma(r-\rho_k+1)} \] 
and 
  \[ \rf{(W-r)}{\rho_k+1} = (-1)^{\rho_k+1}\ff{(r-W)}{\rho_k+1} = (-1)^{\rho_k+1} \frac{\Gamma(r-W+1)}{\Gamma(r-W-\rho_k)}.\]
Combining these,
\begin{align*}
\frac{\rho_k!}{\rf{(W-r)}{\rho_k+1}}
  &= (-1)^{\rho_k+1} \binom{r}{\rho_k}^{-1} \frac{\Gamma(r+1)}{\Gamma(r-\rho_k+1)} \frac{\Gamma(r-W-\rho_k)}{\Gamma(r-W+1)} \\
  &= (-1)^{\rho_k+1} \binom{r}{\rho_k}^{-1} \frac{\Gamma(r+1)}{\Gamma(r+1-W)} \frac{\Gamma(r-\rho_k-W)}{\Gamma(r-\rho_k+1)} \\
  &= C_{m,k,r}.
\end{align*}

If $k\neq m$ and $\rho_k \geq r$, then
  \[ \rho_k! = \binom{\rho_k}{r} r! (\rho_k-r)! = \binom{\rho_k}{r} \Gamma(r+1)\Gamma(\rho_k-r+1)\]
and
\begin{align*}
    \rf{(W-r)}{\rho_k+1}
        &=\rf{(W-r)}{r} \cdot \rf{W}{\rho_k+1-r} \\
        &= (-1)^r \ff{(r-W)}{r} \cdot \ff{(W+\rho_k-r)}{\rho_k-r+1} \\
        &= (-1)^r \frac{\Gamma(r-W+1)}{\Gamma(r-W-r+1)} \cdot \frac{\Gamma(\rho_k-r+W+1)}{\Gamma(\rho_k-r+W-(\rho_k-r+1)+1)} \\
        &= (-1)^r \frac{\Gamma(r+1-W) \Gamma(\rho_k-r+1+W)}{\Gamma(1-W)\Gamma(W)}.
\end{align*}
By Euler's reflection formula for the Gamma fucntion, $\Gamma(1-W)\Gamma(W) = \pi/\sin(\pi W).$ Combining these,
\begin{align*}
\frac{\rho_k!}{\rf{(W-r)}{\rho_k+1}}
  &= (-1)^r \frac{\pi}{\sin(\pi W)} \frac{\Gamma(r+1)}{\Gamma(r+1-W)} \frac{\Gamma(\rho_k-r+1)}{\Gamma(\rho_k-r+1+W)} \binom{\rho_k}{r} \\
  &= C_{m,k,r}.
\end{align*}
This concludes the proof.
\end{proof}

\begin{claim} \label{Claim:H as iterated integral}
For $M\geq 1$, we can represent $\Hzor$ as an $M$-fold iterated (principal value) contour integral as
	\[
	\Hzor = \frac{Q_m}{\vr\,!} \, \int_{(G)} T_m^{-\omega_m-1} \bigg( \prod_{\substack{k=0\\ k\neq m}}^M  t_k^{\omega_k}(1+t_k)^{\rho_k}\bigg)
		\left(1 - (-1)^M \,\frac{1-z}{T_m} \right)^{\rho_m} \,d\vec{t},
	\]
where
	\begin{align*}
	Q_m 
		&\coloneqq  \prod_{\substack{k=0 \\ k \neq m}}^M \frac{1}{2i\sin(\pi(\omega_k-\omega_m))} \\
	T_m
		&\coloneqq  \prod_{\substack{k=0 \\ k \neq m}}^M t_k \\
	\int_{(G)} 
		&\coloneqq  \int_{|t_0|=1}  \cdots \int_{|t_{m-1}|=1}  \int_{|t_{m+1}|=1} \cdot \int_{|t_M|=1} \\
	d\vec{t}
		&\coloneqq  dt_M \cdots dt_{m+1}\,dt_{m-1} \cdots t_1.
	\end{align*}
\end{claim}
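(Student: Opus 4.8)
The plan is to evaluate the $M$-dimensional contour integral explicitly and match it against the closed form of Claim~\ref{Claim:H explicit} (Theorem~\ref{thm:H}\ref{thm:H(ii)}). The first step is to expand the last factor by the finite binomial theorem, using $1-z=-(z-1)$:
\[
\left(1 - (-1)^M\,\frac{1-z}{T_m}\right)^{\rho_m} = \sum_{r=0}^{\rho_m}\binom{\rho_m}{r}(-1)^{Mr}(z-1)^r\, T_m^{-r}.
\]
Since $T_m^{-\omega_m-1-r}=\prod_{k\neq m} t_k^{-\omega_m-1-r}$, each summand of the resulting integrand is a product over $k\neq m$ of a function of the single variable $t_k$, so the iterated integral $\int_{(G)}$ splits as a product of $M$ one-dimensional contour integrals; because we are dealing with a finite sum of products of bounded one-variable integrals there is no convergence subtlety in this decoupling. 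Writing $W=\omega_k-\omega_m$, the task reduces to evaluating
\[
J_k(r) \coloneqq \int_{|t|=1} t^{W-r-1}(1+t)^{\rho_k}\,dt,
\]
where the circle is traversed with $t=e^{i\theta}$, $\theta$ from $-\pi$ to $\pi$, and $t^{W-r-1}$ denotes the principal branch.

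The second step is the elementary evaluation of $J_k(r)$. Expanding $(1+t)^{\rho_k}=\sum_j\binom{\rho_k}{j}t^j$ (a polynomial, so no branch issue) and using $\int_{-\pi}^{\pi}e^{i\lambda\theta}\,d\theta = \frac{2\sin(\pi\lambda)}{\lambda}$, valid here because $W\notin\ZZ$, gives
\[
J_k(r) = 2i\,(-1)^r\sin(\pi W)\sum_{j=0}^{\rho_k}\binom{\rho_k}{j}\frac{(-1)^j}{W-r+j}.
\]
The inner sum is the standard partial-fraction identity $\sum_{j=0}^{n}\binom{n}{j}\frac{(-1)^j}{x+j}=\frac{n!}{\rf{x}{n+1}}$ (coming from $\int_0^1 t^{x-1}(1-t)^n\,dt$), so
\[
J_k(r) = 2i\,(-1)^r\sin(\pi(\omega_k-\omega_m))\,\frac{\rho_k!}{\rf{(\omega_k-\omega_m-r)}{\rho_k+1}}.
\]

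The final step is assembly. Taking the product over the $M$ indices $k\neq m$, the factors $2i\sin(\pi(\omega_k-\omega_m))$ combine to $Q_m^{-1}$, the signs $(-1)^r$ contribute $(-1)^{rM}$ which cancels the $(-1)^{Mr}$ coming from the binomial expansion, and $\prod_{k\neq m}\rho_k!=\vr\,!/\rho_m!$. Hence the right-hand side of the claim collapses to
\[
\frac{1}{\rho_m!}\sum_{r=0}^{\rho_m}(z-1)^r\binom{\rho_m}{r}\prod_{\substack{k=0\\k\neq m}}^M\frac{1}{\rf{(\omega_k-\omega_m-r)}{\rho_k+1}},
\]
which is exactly the formula of Claim~\ref{Claim:H explicit}, proving the claim. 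I expect the only places requiring genuine care to be the sign bookkeeping (tracking $(-1)^{Mr}$ versus $(-1)^r$, $(1-z)$ versus $(z-1)$, and the $2i$'s against the definition of $Q_m$) and the justification of the elementary integral $J_k(r)$ near the branch-cut endpoints $\theta=\pm\pi$; by contrast the decoupling of the $M$-dimensional integral is immediate once the last factor is expanded.
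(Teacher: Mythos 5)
Your proposal is correct and is essentially the paper's own argument run in reverse: both reduce the claim to the closed form of Claim~\ref{Claim:H explicit} via the single-variable principal-value evaluation $\PV\int_{|t|=1} t^{x-1}(1+t)^{\rho}\,dt = 2i\sin(\pi x)\,\rho!/\rf{x}{\rho+1}$, with the same decoupling into one-dimensional circle integrals and the same $Q_m$, $\vr\,!$, and $(-1)^{Mr}$ bookkeeping. The only difference is that you start from the integral and prove that one-variable identity directly (term-by-term integration of the polynomial $(1+t)^{\rho_k}$ plus the partial-fraction identity), whereas the paper asserts it by induction and integration by parts and then assembles the sum of Claim~\ref{Claim:H explicit} into the iterated integral; since every step is an equality, the direction is immaterial.
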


\begin{proof}
By induction and integration-by-parts, we notice that
	\begin{equation}\label{eq:PV integral}
	\PV \int_{|t|=1} t^{x-1} (1+t)^\rho\,dt = \int_{-\pi}^{\pi} e^{i(x-1)t}(1+e^{it})^{\rho} \,ie^{it}dt = \frac{2i\sin(\pi x) \, \rho!}{\rf{x}{\rho+1}},
	\end{equation}
provided that $\rho$ is a nonnegative integer and $x\in \CC\setminus\{0,-1,-2,\dots\}$. In this claim and its proof, all integrals are understood to be principal values.

Beginning with Claim~\ref{Claim:H explicit}, we may write $\Hzor$ as
\begin{align*} 
\textsc{P}& \textsc{oly}_m\!\big(z\, \big\lvert\,\substack{\vo \\ \vr} \big) 	\\
	&= \frac{1}{\rho_m!}\,\sum_{r=0}^{\rho_m} (z-1)^r \binom{\rho_m}{r} 
	\prod_{\substack{k=0\\ k\neq m}}^M \frac{1}{\rf{(\omega_k-\omega_m-r)}{\rho_ k+1}} \\
	&= \frac{1}{\vr\,!} \, \sum_{r=0}^{\rho_m} (z-1)^r  \binom{\rho_m}{r}
	\prod_{\substack{k=0\\ k\neq m}}^M \frac{1}{2i\sin(\pi(\omega_k-\omega_m-r))} 
	\frac{2i\sin(\pi (\omega_k-\omega_m-r))\, \rho_k!}{\rf{(\omega_k-\omega_m-r)}{\rho_ k+1}}.
\end{align*}
We now use equation~\eqref{eq:PV integral} to continue
\begin{align*}
\textsc{P}& \textsc{oly}_m\!\big(z\, \big\lvert\,\substack{\vo \\ \vr} \big) 	\\	&= 	 \frac{1}{\vr\,!}\, \sum_{r=0}^{\rho_m} (z-1)^r \binom{\rho_m}{r}
	\prod_{\substack{k=0\\ k\neq m}}^M (-1)^r Q_m \int_{|t_k|=1} t_k^{\omega_k-\omega_m-r-1}(1+t_k)^{\rho_k}\,dt_k \\
	&= \frac{Q_m}{\vr\,!} \, \sum_{r=0}^{\rho_m} (-1)^{rM} (z-1)^r \binom{\rho_m}{r} 
	\prod_{\substack{k=0\\ k\neq m}}^M  \int_{|t_k|=1} t_k^{\omega_k-\omega_m-r-1}(1+t_k)^{\rho_k}\,dt_k 
\end{align*}
Compressing the product of integrals using the $\int_{(G)}$ notation, we continue with
	\begin{align*}
\textsc{P}& \textsc{oly}_m\!\big(z\, \big\lvert\,\substack{\vo \\ \vr} \big) 	\\	&= \frac{Q_m}{\vr\,!} \, \int_{(G)} \sum_{r=0}^{\rho_m} (-1)^{rM} (z-1)^r \binom{\rho_m}{r}
		\prod_{\substack{k=0\\ k\neq m}}^M  t_k^{\omega_k-\omega_m-r-1}(1+t_k)^{\rho_k}\,d\vec{t} \\
	&= \frac{Q_m}{\vr\,!} \, \int_{(G)} \sum_{r=0}^{\rho_m} (-1)^{rM} (z-1)^r \binom{\rho_m}{r}
		T_m^{-r}\,T_m^{-\omega_m-1} \prod_{\substack{k=0\\ k\neq m}}^M  t_k^{\omega_k}(1+t_k)^{\rho_k}\,d\vec{t} \\
	&= \frac{Q_m}{\vr\,!} \, \int_{(G)} T_m^{-\omega_m-1}\bigg( \prod_{\substack{k=0\\ k\neq m}}^M  t_k^{\omega_k}(1+t_k)^{\rho_k}\bigg)
		\sum_{r=0}^{\rho_m} \binom{\rho_m}{r} \left((-1)^{M} \,\frac{z-1}{T_m}\right)^r \,d\vec{t} \\
	&= \frac{Q_m}{\vr\,!} \, \int_{(G)} T_m^{-\omega_m-1} \bigg( \prod_{\substack{k=0\\ k\neq m}}^M  t_k^{\omega_k}(1+t_k)^{\rho_k}\bigg)
		\left(1 - (-1)^M \,\frac{1-z}{T_m} \right)^{\rho_m} \,d\vec{t},
	\end{align*}
as asserted in the Claim.
\end{proof}

\subsection{Hypergeometric Functions}
Claim~\ref{Claim:G as Meijer} below is Theorem~\ref{thm:G}\ref{thm:G(v)}, and Claim~\ref{Claim:H as hypergeometric} is Theorem~\ref{thm:H}\ref{thm:H(iv)}.

The Meijer $G$-function~\cite{NISTonline} is defined for natural numbers $m,n,p,q$, provided $m\leq q$ and $n\leq p$, although we only encounter it here with 
$n=0, m=p=q=M+1$. It is denoted
\[\MeijerG{m}{n}{p}{q}{z}{a_1,a_2, \dots, a_p}{b_1,b_2,\dots,b_q}\]
and defined as
      \[ \frac{1}{2\pi i} \int_C \frac{\prod_{k=1}^m \Gamma(s+b_k) \prod_{k=1}^n \Gamma(1-a_k-s)}
            {\prod_{k=n+1}^p \Gamma(s+a_k) \prod_{k=m+1}^q \Gamma(1-b_k-s)} z^{-s}\,ds,\]
where $C$ is a particular infinite contour that separates the poles of $\Gamma(1-a_k-s)$ from those of $\Gamma(b_k+s)$; the particular contour required for convergence varies depending on $m,n,p,q,z$. 

\begin{claim} \label{Claim:G as Meijer}
$\Gzor$, when $|z|<1$ and $|1-z|<1$, is a special value of Meijer's $G$-function
   	\[ \Gzor = \MeijerG{M+1}{0}{M+1}{M+1}{1-z}{\vo+\vr+1}{\vo}.\]
\end{claim}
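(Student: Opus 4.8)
The plan is to connect the proposed Meijer $G$-value to the contour-integral form of $\Gzor$ already established in Claim~\ref{Claim:G as contour integral} (Theorem~\ref{thm:G}\ref{thm:G(iii)}). First I would unwind the definition of the right-hand side. With $n=0$, $m=p=q=M+1$, lower parameters $b_{k+1}=\omega_k$ and upper parameters $a_{k+1}=\omega_k+\rho_k+1$ (reindexing $k$ from $0$ to $M$), the empty products in the Mellin--Barnes definition collapse and the integrand becomes
\[
\frac{\prod_{k=0}^M\Gamma(s+\omega_k)}{\prod_{k=0}^M\Gamma(s+\omega_k+\rho_k+1)}\,(1-z)^{-s}
=(1-z)^{-s}\prod_{k=0}^M\frac{1}{\rf{(s+\omega_k)}{\rho_k+1}},
\]
so the ratio of Gamma functions is in fact a rational function of $s$. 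Because no two $\omega_k$ differ by an integer, this rational function has exactly $\sigma$ simple poles, located precisely at $s=-\omega_k-r$ for $0\le k\le M$, $0\le r\le\rho_k$: the deeper poles of each $\Gamma(s+\omega_k)$, at $s=-\omega_k-\rho_k-1-j$ with $j\ge0$, are cancelled by the matching poles of $\Gamma(s+\omega_k+\rho_k+1)$ in the denominator.

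Next I would substitute $\xi=-s$ in the contour integral of Claim~\ref{Claim:G as contour integral}. Since $\ff{(\xi-\omega_k)}{\rho_k+1}=\ff{(-s-\omega_k)}{\rho_k+1}=(-1)^{\rho_k+1}\rf{(s+\omega_k)}{\rho_k+1}$, the product over $k$ contributes a factor $(-1)^{\sigma}$; together with $d\xi=-\,ds$ and the fact that the point reflection $\xi\mapsto-\xi$ preserves orientation (so the positively oriented loop $\gamma$ becomes a positively oriented loop $\tilde\gamma$ encircling the $\sigma$ points $s=-\omega_k-r$), all the signs combine with the prefactor $(-1)^{\sigma-1}$ to give $+1$. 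Hence
\[
\Gzor=\frac{1}{2\pi i}\int_{\tilde\gamma}(1-z)^{-s}\prod_{k=0}^M\frac{1}{\rf{(s+\omega_k)}{\rho_k+1}}\,ds,
\]
which has exactly the integrand of the Meijer representation; only the contour differs.

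It then remains to show that the finite loop $\tilde\gamma$ may be replaced by the infinite contour $C$ occurring in the definition of Meijer's $G$-function. With $n=0$ and $m=M+1$, $C$ may be taken to be a loop beginning and ending at $-\infty$ that encircles, in the positive sense, all poles of $\prod_{k}\Gamma(s+\omega_k)$. Since, by the first paragraph, the full integrand is meromorphic with its only singularities the $\sigma$ poles already enclosed by $\tilde\gamma$, and since for $|1-z|<1$ the integrand decays like $|1-z|^{-\operatorname{Re}(s)}\cdot O(|s|^{-\sigma})\to0$ as $\operatorname{Re}(s)\to-\infty$, one can deform $C$ inward onto $\tilde\gamma$ without crossing any singularity; therefore $\int_C=\int_{\tilde\gamma}$ and the claimed identity follows. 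The hypothesis $|z|<1$ plays its usual role of placing us in the region where this form of $\Gzor$ agrees with the others.

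I expect the main obstacle to be the second half of the last paragraph: making precise that Meijer's contour $C$ may be collapsed to the finite loop $\tilde\gamma$, i.e. quoting or verifying the exact convergence conditions (the case $p=q$ is borderline and is rescued here by $|1-z|<1$) together with a decay estimate on the tail of $C$ that is uniform enough to license the deformation. The sign bookkeeping in the substitution $\xi=-s$, while elementary, is the other place where an off-by-one in powers of $-1$ could slip in, so I would also confirm it numerically as the authors recommend.
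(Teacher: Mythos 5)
Your proposal follows essentially the same route as the paper's own proof: unwind the Mellin--Barnes definition with $n=0$, $m=p=q=M+1$ so the Gamma ratio collapses to $\prod_{k=0}^M \rf{(s+\omega_k)}{\rho_k+1}^{-1}$, substitute $\xi=-s$ with the same sign bookkeeping, and identify the result with the contour-integral form of Claim~\ref{Claim:G as contour integral}. In fact you go a bit beyond the paper, whose proof is only a sketch that admits to playing ``fast-and-loose'' with the contour; your observations that the integrand is rational with exactly the $\sigma$ simple poles enclosed and that $|1-z|<1$ gives the decay needed to collapse the infinite Meijer contour onto a finite loop are precisely the missing justification, and they are correct as far as they go.
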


\begin{proof}[Sketch of Proof.]
With $m=p=q=M+1, n=0$, we see that 
	\[\prod_{k=1}^n \Gamma(1-a_k-s) = \prod_{k=m+1}^q \Gamma(1-b_k-s) =1.\]
Further, with $a_{k+1}=\omega_k+\rho_k+1, b_{k+1}=\omega_k$, 
	\[\frac{\prod_{k=1}^m \Gamma(s+b_k) }{\prod_{k=n+1}^p \Gamma(s+a_k)} 
		= \prod_{k=0}^M \frac{\Gamma(s+\omega_k)}{\Gamma(s+\omega_k+\rho_k+1)}
		= \prod_{k=0}^M \frac{1}{\rf{(s+\omega_k)}{\rho_k+1}}.
	\]
We now have
	\begin{align*}
	\MeijerG{M+1}{0}{M+1}{M+1}{1-z}{\vo+\vr+1}{\vo} 
		&= \frac{1}{2\pi i} \int_C  (1-z)^{-s} \, \prod_{k=0}^M\frac{1}{\rf{(s+\omega_k)}{\rho_k+1}} \,ds \\
		&= \frac{1}{2\pi i} \int_C  (1-z)^{\xi} \, \prod_{k=0}^M\frac{(-1)^{\rho_k+1}}{\ff{(\xi-\omega_k)}{\rho_k+1}} \,(-d\xi) \\
		&= \frac{(-1)^{\sigma-1}}{2\pi i} \int_{C}  (1-z)^{\xi} \, \prod_{k=0}^M\frac{1}{\ff{(\xi-\omega_k)}{\rho_k+1}} \,d\xi \\
		&= \Gzor.
	\end{align*}
Admittedly, we have played fast-and-loose with the contour, and therefore the conditions $|z|<1$ and $|1-z|<1$ are not explained.
\end{proof}

\begin{theorem}[Slater's Theorem~\cite{Slater}]\label{Slater}
Provided that $a_j-b_h$ is not a positive integer (with $j\leq n,h\leq m$), and $b_j-b_k$ is not an integer (with $1\leq j<k \leq q$), and $0<|z|<1$,
        \begin{multline*}
        \MeijerG{m}{n}{p}{q}{z}{a_1,a_2, \dots, a_p}{b_1,b_2,\dots,b_q} = \\
        \sum_{h=1}^m \frac{\prod_{\substack{k=1 \\ k \neq h}}^m \Gamma(b_k-b_h)\prod_{k=1}^n\Gamma(1+b_h-a_k)}
             {\prod_{k=m+1}^q \Gamma(1+b_h-b_k)\prod_{k=n+1}^p \Gamma(a_k-b_h)} \,
             \pFq{p}{q-1}{\vec{a}}{\vec{b}}{(-1)^{m+n-p}z}\,z^{b_h} ,
        \end{multline*}
where $\vec{a_h}=\langle 1+b_h-a_1, \dots,1+b_h-a_p \rangle$ and $\vec{b_h}=\langle 1+b_h-b_1,\dots,1+b_h-b_q \rangle$ (with the $1+b_h-b_h$ term 
omitted).
\end{theorem}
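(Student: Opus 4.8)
The plan is to prove Slater's Theorem by the classical route: displace the Mellin--Barnes contour defining the Meijer $G$-function leftward to $\operatorname{Re}s=-\infty$ and sum the residues picked up along the way. Recall that the Meijer $G$-function is, by definition,
\[
\frac{1}{2\pi i}\int_C \frac{\prod_{k=1}^m \Gamma(s+b_k)\,\prod_{k=1}^n \Gamma(1-a_k-s)}{\prod_{k=n+1}^p \Gamma(s+a_k)\,\prod_{k=m+1}^q \Gamma(1-b_k-s)}\,z^{-s}\,ds ,
\]
where $C$ runs upward separating the left poles of the factors $\Gamma(s+b_k)$ from the right poles of the factors $\Gamma(1-a_k-s)$. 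The hypothesis $b_j-b_k\notin\ZZ$ for $j\neq k$ makes the poles coming from $\prod_{k=1}^m\Gamma(s+b_k)$ all simple, located precisely at $s=-b_h-\nu$ for $1\le h\le m$ and $\nu\in\{0,1,2,\dots\}$; the hypothesis $a_j-b_h\notin\ZZ_{>0}$ (for $j\le n$, $h\le m$) guarantees that the right poles of the $\Gamma(1-a_k-s)$ never collide with these, so that $C$ exists and each residue at $s=-b_h-\nu$ is a genuine simple-pole residue. Since $0<|z|<1$ gives $|z^{-s}|=|z|^{-\operatorname{Re}s}\,e^{(\operatorname{Im}s)\arg z}\to 0$ as $\operatorname{Re}s\to-\infty$, the first step is to shift $C$ past all the poles $s=-b_h-\nu$; then, by the residue theorem, the integral equals $\sum_{h=1}^m\sum_{\nu\ge 0}$ of the residues there. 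Making this rigorous requires bounding the quotient of Gamma functions on a sequence of leftward-moving vertical segments and joining arcs via Stirling's formula, and this is exactly the step where the sizes of $p$ and $q$ relative to $|z|$ genuinely enter. I expect this to be the main obstacle; everything afterward is bookkeeping.

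Granting the displacement, I would evaluate the residue at $s=-b_h-\nu$ by using $\operatorname{Res}_{s=-b_h-\nu}\Gamma(s+b_h)=(-1)^\nu/\nu!$, substituting $s=-b_h-\nu$ into every other Gamma factor and into $z^{-s}=z^{\,b_h+\nu}$, and then normalising every remaining Gamma to its value at $\nu=0$ by means of $\Gamma(c-\nu)=(-1)^\nu\Gamma(c)/\rf{(1-c)}{\nu}$ for the factors carrying $-\nu$ (namely $\Gamma(b_k-b_h-\nu)$ with $k\neq h$, $k\le m$, and $\Gamma(a_k-b_h-\nu)$ with $n<k\le p$) and $\Gamma(d+\nu)=\Gamma(d)\,\rf{d}{\nu}$ for the factors carrying $+\nu$ (namely $\Gamma(1+b_h-a_k+\nu)$ with $k\le n$ and $\Gamma(1+b_h-b_k+\nu)$ with $m<k\le q$). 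Each residue then factors as its value at $\nu=0$ times a quotient of rising factorials in $\nu$ times a power of $z$ times a power of $-1$. The value at $\nu=0$ is exactly the prefactor
\[
\frac{\prod_{k\neq h}\Gamma(b_k-b_h)\,\prod_{k=1}^n\Gamma(1+b_h-a_k)}{\prod_{k=n+1}^p\Gamma(a_k-b_h)\,\prod_{k=m+1}^q\Gamma(1+b_h-b_k)}
\]
appearing in the statement, while summing the rest over $\nu$ produces $z^{b_h}$ out front and a ${}_pF_{q-1}$: its $p$ upper parameters $\rf{(1+b_h-a_k)}{\nu}$ are the $n$ coming from the $\Gamma(1+b_h-a_k+\nu)$ together with the $p-n$ coming from the reciprocals $1/\Gamma(a_k-b_h-\nu)$; its $q-1$ lower parameters $\rf{(1+b_h-b_k)}{\nu}$ (with $k\neq h$) are the $m-1$ coming from the $\Gamma(b_k-b_h-\nu)$ together with the $q-m$ coming from the $\Gamma(1+b_h-b_k+\nu)$; and the $1/\nu!$ from the residue supplies the standard factorial of the hypergeometric series. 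This is precisely the $h$-th summand of the statement.

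It remains only to pin down the argument of the ${}_pF_{q-1}$. Tallying the factors $(-1)^\nu$ generated above — one from each of the $m-1$ factors $\Gamma(b_k-b_h-\nu)$, one from each of the $p-n$ reciprocals $1/\Gamma(a_k-b_h-\nu)$, and one from $\operatorname{Res}\Gamma(s+b_h)$ — and absorbing the $z^\nu$, the hypergeometric argument equals $(-1)^{\,m-1+(p-n)+1}z=(-1)^{\,m+p-n}z$; since $(m+p-n)-(m+n-p)=2(p-n)$ is even, this is $(-1)^{\,m+n-p}z$, as asserted. Finally, the ${}_pF_{q-1}$ so produced has radius of convergence at least $1$ (and is entire when $p<q$), consistent with the hypothesis $0<|z|<1$, and this convergence is also what retroactively licenses interchanging the $\nu$-summation with the contour limit used in the first step. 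Summing over $h=1,\dots,m$ then yields the stated identity.
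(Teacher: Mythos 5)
The paper does not actually prove this theorem: it is imported verbatim from Slater's book, so there is no internal proof to compare against. Your proposal is the standard derivation (push the Mellin--Barnes contour to $\operatorname{Re}s=-\infty$ and sum residues), and the residue bookkeeping is correct: the hypothesis $b_j-b_k\notin\ZZ$ makes the poles of $\prod_{k\le m}\Gamma(s+b_k)$ at $s=-b_h-\nu$ simple, the hypothesis on $a_j-b_h$ ensures these never meet the right poles so the separating contour exists, $\operatorname{Res}_{s=-b_h-\nu}\Gamma(s+b_h)=(-1)^\nu/\nu!$, the reductions $\Gamma(c-\nu)=(-1)^\nu\Gamma(c)/\rf{(1-c)}{\nu}$ and $\Gamma(d+\nu)=\Gamma(d)\rf{d}{\nu}$ produce exactly $p$ upper and $q-1$ lower parameters together with the stated Gamma prefactor, and your sign count $(-1)^{\nu(m+p-n)}=(-1)^{\nu(m+n-p)}$ gives the correct argument of the ${}_pF_{q-1}$.

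The genuine gap is the one you yourself flag and then never close: the displacement of $C$ past infinitely many poles and the identification of the integral with the sum of all residues. Pointwise decay of $|z^{-s}|$ as $\operatorname{Re}s\to-\infty$ is not enough; one must bound the Gamma quotient via Stirling's formula on a sequence of left-moving vertical lines (or expanding arcs) threaded between the poles, show the corresponding integrals tend to $0$, and justify the limit/summation interchange. This is where the relation between $p$ and $q$ (one needs $p\le q$, with $|z|<1$ doing real work when $p=q$) actually enters, and it is not listed among the stated hypotheses, so it must be surfaced in the estimate. Your closing remark that convergence of the resulting ${}_pF_{q-1}$ ``retroactively licenses'' the interchange is not a valid substitute: convergence of the answer does not justify the contour-shifting step that produced it. As written, the proposal is therefore a correct and well-organized sketch of the classical proof, but not yet a complete proof; supplying the Stirling estimate on the displaced contours would finish it.
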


\begin{claim} \label{Claim:H as hypergeometric}
The Padé approximant $\Hzor$ is associated with a generalized hypergeometric function by
	\[
	\Hzor = \frac{1}{\rho_m!} \bigg( \prod_{\substack{k=0 \\ k \neq m}}^M \frac{1}{\rf{(\omega_k-\omega_m)}{\rho_k+1}}\bigg)
	\,\pFq{M+1}{M}{\omega_m-\vo-\vr}{(1+\omega_m - \vo)^{\star m}}{1-z}.
	\]
\end{claim}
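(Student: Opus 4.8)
The plan is to derive this hypergeometric identity directly from the explicit finite-sum formula for $\Hzor$ in Claim~\ref{Claim:H explicit}, namely
\[
\Hzor = \frac{1}{\rho_m!}\,\sum_{r=0}^{\rho_m} (z-1)^r \binom{\rho_m}{r}
    \prod_{\substack{k=0\\ k\neq m}}^M \frac{1}{\rf{(\omega_k-\omega_m-r)}{\rho_k+1}},
\]
by recognizing the sum over $r$ as a truncated ${}_{M+1}F_M$ series evaluated at $1-z$. First I would rewrite each factor $\rf{(\omega_k-\omega_m-r)}{\rho_k+1}$ so as to split off the ``$r=0$'' value $\rf{(\omega_k-\omega_m)}{\rho_k+1}$ (which becomes the prefactor in the claim) from an $r$-dependent ratio. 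The standard manipulation is
\[
\frac{\rf{(W-r)}{\rho_k+1}}{\rf{W}{\rho_k+1}}
    = \frac{\rf{(W-r)}{r}}{\rf{(W+\rho_k+1-r)}{r}}
    = \frac{\rf{(-W)}{r}}{\rf{(-W-\rho_k)}{r}}
\]
(where $W=\omega_k-\omega_m$), obtained by writing $\rf{(W-r)}{\rho_k+1}=\rf{(W-r)}{r}\cdot\rf{W}{\rho_k+1-r}$ and $\rf{W}{\rho_k+1}=\rf{W}{\rho_k+1-r}\cdot\rf{(W+\rho_k+1-r)}{r}$, then using $\rf{x}{r}=(-1)^r\ff{(-x)}{r}$ to reverse the products. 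Thus
\[
\frac{1}{\rf{(W-r)}{\rho_k+1}} = \frac{1}{\rf{W}{\rho_k+1}}\cdot\frac{\rf{(\omega_m-\omega_k-\rho_k)}{r}}{\rf{(\omega_m-\omega_k)}{r}}.
\]

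Next I would handle the $k=m$ contribution. The term $\frac{1}{\rho_m!}\binom{\rho_m}{r}$ should be absorbed so that it supplies both the extra numerator parameter and matches the $\frac{1}{n!}$ in the hypergeometric series; concretely $\binom{\rho_m}{r} = \frac{\rf{(-\rho_m)}{r}(-1)^r}{r!}$, and the sign $(-1)^r$ combines with $(z-1)^r=(-1)^r(1-z)^r$ to give $(1-z)^r$. Assembling everything, the sum over $0\le r\le\rho_m$ becomes
\[
\sum_{r\ge 0} \frac{\rf{(\omega_m-\omega_m-\rho_m)}{r}\prod_{k\neq m}\rf{(\omega_m-\omega_k-\rho_k)}{r}}{\prod_{k\neq m}\rf{(\omega_m-\omega_k)}{r}}\,\frac{(1-z)^r}{r!},
\]
where the upper limit $\rho_m$ is automatic because $\rf{(-\rho_m)}{r}=0$ for $r>\rho_m$. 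The numerator parameters are exactly the $M+1$ numbers $\omega_m-\omega_k-\rho_k$ for $0\le k\le M$, i.e.\ the coordinates of $\omega_m-\vo-\vr$; the denominator parameters are the $M$ numbers $1+\omega_m-\omega_k$ for $k\neq m$ minus one — wait, one must check the shift: the series $\pFq{M+1}{M}{a_1,\dots}{b_1,\dots}{w}$ has $\rf{b_j}{r}$ in the denominator, and here we have $\rf{(\omega_m-\omega_k)}{r}$, which corresponds to $b$-parameter $\omega_m-\omega_k$; but the claim writes $(1+\omega_m-\omega_k)^{\star m}$. This discrepancy is resolved because the paper's ${}_pF_q$ convention places $\rf{b_j}{r}$ in the denominator while the product here is over $\rf{(\omega_m-\omega_k)}{r}$ — so I must double-check whether an index shift ($r\mapsto r$ vs.\ the relation $\rf{x}{r}=x\,\rf{(x+1)}{r-1}$) is needed, or whether in fact the correct denominator parameter genuinely is $1+\omega_m-\omega_k$ once the $r=0$ piece is pulled out differently; reconciling this shift cleanly is the one bookkeeping point requiring care.

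The main obstacle, then, is purely notational: matching the exact list of upper and lower parameters, with the correct ``$+1$'' shifts and the correct treatment of the $k=m$ slot (which contributes a numerator parameter $-\rho_m = \omega_m-\omega_m-\rho_m$ but no denominator parameter, consistent with having $M+1$ top and $M$ bottom parameters), and verifying that the constant prefactor collects to exactly $\frac{1}{\rho_m!}\prod_{k\neq m}\rf{(\omega_k-\omega_m)}{\rho_k+1}^{-1}$. I expect no analytic difficulty — convergence for $|1-z|<1$ is immediate since the series terminates — so the proof is a careful but routine rearrangement of falling/rising factorials, of exactly the same flavor as the proof of Claim~\ref{Claim:H as gamma ratios}. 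As an alternative or a check, one could instead deduce the formula from the contour integral in Claim~\ref{Claim:H as a contour} by applying Slater's Theorem (Theorem~\ref{Slater}) to the Meijer-$G$ representation, extracting the single residue term at $b_h = \omega_m$; this is more machinery than needed but confirms the parameter identification independently.
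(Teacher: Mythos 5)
Your route is correct, but it is genuinely different from the paper's. The paper never touches the finite sum of Claim~\ref{Claim:H explicit} here: it applies Slater's Theorem (Theorem~\ref{Slater}) to the Meijer-$G$ representation of Claim~\ref{Claim:G as Meijer}, obtaining $\Gzor$ as $\sum_{m}c_m\,(1-z)^{\omega_m}$ times hypergeometric factors, and then identifies the $m$-th coefficient with $\Hzor$ by noting that one numerator parameter is $-\rho_m$, so that coefficient is a polynomial of degree at most $\rho_m$ and the uniqueness theorem applies. Your direct resummation of Claim~\ref{Claim:H explicit} into a terminating ${}_{M+1}F_M$ is more elementary: it avoids Slater's theorem entirely and also avoids the contour caveats in the paper's sketch of Claim~\ref{Claim:G as Meijer} (stated only for $|z|<1$, $|1-z|<1$). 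What the paper's route buys is that one Slater expansion produces all $M+1$ approximants simultaneously and ties them directly to $\Gzor$; what yours buys is a self-contained, purely algebraic verification.

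The one point you left open, the apparent mismatch between $\rf{(\omega_m-\omega_k)}{r}$ and the claimed lower parameter $1+\omega_m-\omega_k$, is not a real discrepancy: it is an off-by-one in your middle display. Indeed $\rf{(W-r)}{r}=\ff{(W-1)}{r}=(-1)^r\rf{(1-W)}{r}$, not $(-1)^r\rf{(-W)}{r}$, while $\rf{(W+\rho_k+1-r)}{r}=\ff{(W+\rho_k)}{r}=(-1)^r\rf{(-W-\rho_k)}{r}$, so the correct splitting reads
\begin{equation*}
\frac{1}{\rf{(W-r)}{\rho_k+1}}
=\frac{1}{\rf{W}{\rho_k+1}}\cdot\frac{\rf{(\omega_m-\omega_k-\rho_k)}{r}}{\rf{(1+\omega_m-\omega_k)}{r}},
\end{equation*}
which hands you exactly the denominator parameters $(1+\omega_m-\vo)^{\star m}$ with no further shift. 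Combined with $\binom{\rho_m}{r}=(-1)^r\rf{(-\rho_m)}{r}/r!$ and $(z-1)^r=(-1)^r(1-z)^r$, the sum becomes precisely the claimed terminating series. One further small repair: your factorization $\rf{(W-r)}{\rho_k+1}=\rf{(W-r)}{r}\,\rf{W}{\rho_k+1-r}$ presupposes $r\le\rho_k+1$, whereas $r$ runs up to $\rho_m$, which may be larger; the displayed ratio identity nevertheless holds for every $r\ge0$, as one checks by writing both sides as ratios of Gamma values (using the reflection formula to handle the sign bookkeeping), which is legitimate because $\omega_k-\omega_m$ is never an integer, so no factor vanishes or blows up.
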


\begin{proof}
Using Claim~\ref{Claim:G as Meijer} and Theorem~\ref{Slater}, we may write $\Gzor$ as 
\begin{equation*}
\sum_{m=0}^M \frac{\prod_{\substack{k=0 \\ k \neq m}}^M \Gamma(\omega_k-\omega_m)}{\prod_{k=0}^M \Gamma(\omega_k+\rho_k+1-\omega_m)}
		\,\pFq{M+1}{M}{1+\omega_m-\vo-\vr-1}{(1+\omega_m - \vo)^{\star m}}{1-z}
		\, (1-z)^{\omega_m},
\end{equation*}
which we manipulate into the form
\begin{equation*}
	\sum_{m=0}^M \frac{1}{\rho_m!} \bigg( \prod_{\substack{k=0 \\ k \neq m}}^M \frac{1}{\rf{(\omega_k-\omega_m)}{\rho_k+1}}\bigg)\,
		\,\pFq{M+1}{M}{\omega_m-\vo-\vr}{(1+\omega_m - \vo)^{\star m}}{1-z}
		\, (1-z)^{\omega_m}.
\end{equation*}
One coordinate of $\omega_m-\vo-\vr$ is $-\rho_m$, a nonpositive integer. Consequently,
	\[
	\frac{1}{\rho_m!} \bigg( \prod_{\substack{k=0 \\ k \neq m}}^M \frac{1}{\rf{(\omega_k-\omega_m)}{\rho_k+1}}\bigg)
	\,\pFq{M+1}{M}{\omega_m-\vo-\vr}{(1+\omega_m - \vo)^{\star m}}{1-z}
	\]
is a polynomial with degree at most $\rho_m$. Therefore, it must be $\Hzor$.
\end{proof}

\subsection{Power Series}
Claim~\ref{Claim:G as power series} is Theorem~\ref{thm:G}\ref{thm:G(iv)}.

\begin{claim} \label{Claim:G as power series}
Let $g_n$ be the coefficients in the power series expansion of $\Gzor$ at $z=0$, i.e., 
	\(\displaystyle \Gzor = \sum_{n=0}^\infty g_n \frac{z^n}{n!}.\)
Then for $n\ge  0$ we have
	\[g_n = (-1)^n \sum_{m=0}^M \frac{1}{\rho_m!} 
			\sum_{r=0}^{\rho_m} \binom{\rho_m}{r} \,
							\frac{(-1)^r \ff{(\omega_m+r)}{n}}{\prod_{\substack{k=0\\ k\neq m}}^M \rf{(\omega_k-\omega_m-r)}{\rho_k+1}}.
	\]
In particular, $g_n=0$ for $0\leq n \leq \sigma-2$ and $g_{\sigma-1}=1$.
\end{claim}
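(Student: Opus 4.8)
The plan is to read the coefficient formula straight off the closed form of the Padé approximants in Theorem~\ref{thm:H}\ref{thm:H(ii)} (proved as Claim~\ref{Claim:H explicit}), and then obtain the special values $g_n=0$ ($0\le n\le\sigma-2$) and $g_{\sigma-1}=1$ from the defining normalization of $\Gzor$. First I would recall that
\[
\Hzor = \frac{1}{\rho_m!}\sum_{r=0}^{\rho_m}(z-1)^r\binom{\rho_m}{r}\prod_{\substack{k=0\\k\neq m}}^M\frac{1}{\rf{(\omega_k-\omega_m-r)}{\rho_k+1}},
\]
write $(z-1)^r=(-1)^r(1-z)^r$, and multiply by $(1-z)^{\omega_m}$ to get
\[
\Hzor\,(1-z)^{\omega_m} = \frac{1}{\rho_m!}\sum_{r=0}^{\rho_m}(-1)^r\binom{\rho_m}{r}\Bigl(\prod_{\substack{k=0\\k\neq m}}^M\frac{1}{\rf{(\omega_k-\omega_m-r)}{\rho_k+1}}\Bigr)(1-z)^{\omega_m+r}.
\]
Summing over $m$ and using $\Gzor=\sum_{m=0}^M\Hzor(1-z)^{\omega_m}$ exhibits $\Gzor$ as a finite $\CC$-linear combination of the binomial functions $(1-z)^{\omega_m+r}$.

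Next I would expand each $(1-z)^{\omega_m+r}$ by Newton's Binomial Theorem, $(1-z)^{\alpha}=\sum_{n\ge0}(-1)^n\ff{\alpha}{n}z^n/n!$, valid for $|z|<1$. Since the outer sum over the finitely many pairs $(m,r)$ is finite and each binomial series converges absolutely on $|z|<1$, the summations interchange with no analytic difficulty; this simultaneously proves that the resulting Maclaurin series for $\Gzor$ converges for $|z|<1$ and lets me collect the coefficient of $z^n/n!$, which is precisely
\[
g_n = (-1)^n\sum_{m=0}^M\frac{1}{\rho_m!}\sum_{r=0}^{\rho_m}\binom{\rho_m}{r}\frac{(-1)^r\ff{(\omega_m+r)}{n}}{\prod_{\substack{k=0\\k\neq m}}^M\rf{(\omega_k-\omega_m-r)}{\rho_k+1}},
\]
as claimed. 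The only thing to track carefully is the interaction of the sign $(-1)^r$ coming from $(z-1)^r$ with the sign $(-1)^n$ coming from the binomial expansion; this is routine bookkeeping, and I expect it to be the only place where an off-by-one or sign slip could occur, which is also why it is worth the direct Mathematica check mentioned in the introduction.

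Finally, the assertions $g_n=0$ for $0\le n\le\sigma-2$ and $g_{\sigma-1}=1$ require no further computation: by Theorem~\ref{thm:consistency}\ref{thm:consistency, uniqueness} (equivalently, by Definition~\ref{def:Pade notation}) the function $\Gzor$ has a zero of order exactly $\sigma-1$ at $z=0$ and satisfies $\lim_{z\to0}\Gzor/z^{\sigma-1}=1/(\sigma-1)!$, so $g_0=\dots=g_{\sigma-2}=0$ and $g_{\sigma-1}/(\sigma-1)!=1/(\sigma-1)!$. I do not anticipate a genuine obstacle anywhere; should one prefer a proof of this vanishing that uses only the displayed formula for $g_n$, one could instead identify $g_n$ (up to the factor $(-1)^{\sigma-1}(-1)^n$) with $\tfrac{1}{2\pi i}\int_\gamma \ff{\xi}{n}\prod_k\tfrac{1}{\ff{(\xi-\omega_k)}{\rho_k+1}}\,d\xi$ via Cauchy's residue theorem and invoke the $\xi\mapsto N^2/u$ contour argument already carried out in the proof of Claim~\ref{Claim:G as contour integral}, but this detour is unnecessary given the normalization built into $\Gzor$.
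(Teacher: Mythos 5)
Your proposal is correct, and it reaches the coefficient formula by a route that is genuinely simpler than the paper's. The paper starts from the contour-integral representation of $\Gzor$ (Claim~\ref{Claim:G as contour integral}), expands $(1-z)^{\xi}$ by Newton's binomial theorem inside the integral, integrates term by term, and then evaluates each resulting contour integral by residues, effectively re-running the $\Phi_{r,m}(\omega_m+r)$ computation from the proofs of Claims~\ref{Claim:H as a contour} and~\ref{Claim:H explicit}; the vanishing $g_n=0$ for $n\le\sigma-2$ and $g_{\sigma-1}=1$ are then read off from the definition of $\Gzor$ (with the contour argument of equation~\eqref{equ: contour after diff} offered as an alternative). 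You instead take the already-proved closed form of Theorem~\ref{thm:H}\ref{thm:H(ii)}, write $\Gzor=\sum_m \Hzor(1-z)^{\omega_m}$ as a finite linear combination of the functions $(1-z)^{\omega_m+r}$, and expand each of those by Newton's binomial theorem; since the outer sum is finite, no interchange of limits needs justification, and the sign bookkeeping $(z-1)^r=(-1)^r(1-z)^r$ together with the $(-1)^n\ff{(\omega_m+r)}{n}$ from the expansion reproduces the stated $g_n$ exactly. What your route buys is the elimination of the contour machinery (and of the implicit term-by-term integration under the contour integral), at the cost of leaning on Theorem~\ref{thm:H}\ref{thm:H(ii)} as a black box; the paper's route keeps everything inside the contour-integral framework and makes the alternative verification of the initial conditions via equation~\eqref{equ: contour after diff} immediately available. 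Both are logically sound and non-circular, since Claim~\ref{Claim:H explicit} is established before this claim, and your handling of the special values from the normalization in Definition~\ref{def:Pade notation} matches the paper's primary justification.
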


\begin{proof}
We begin with the contour integral representation of $\Gzor$ given in Claim~\ref{Claim:G as contour integral}, replace $(1-z)^\xi$ with its power series, and then integrate term by term, obtaining
\begin{align*}
	\Gzor &= \frac{(-1)^{\sigma-1}}{2\pi i} \int_\gamma  (1-z)^\xi  \prod_{k=0}^M \frac{1}{\ff{(\xi-\omega_ k)}{\rho_ k+1}} \,d\xi \\
		&= \frac{(-1)^{\sigma-1}}{2\pi i} \int_\gamma \left(\sum_{n=0}^\infty (-1)^n \ff{\xi}{n}\,\frac{z^n}{n!}\right) \prod_{k=0}^M \frac{1}{\ff{(\xi-\omega_ k)}{\rho_ k+1}} \,d\xi \\
		&= \sum_{n=0}^\infty (-1)^{n} \left(\frac{1}{2\pi i} \int_\gamma \frac{(-1)^{\sigma-1} \ff{\xi}{n}}{\prod_{k=0}^M \ff{(\xi-\omega_ k)}{\rho_ k+1}} \, d\xi \right) \, \frac{z^n}{n!}.
\end{align*}
Continuing as in the proof of Claims~\ref{Claim:H as a contour}  and~\ref{Claim:H explicit}, we see that
\begin{align}
	g_n 	&= (-1)^n \left(\frac{1}{2\pi i} \int_\gamma \frac{(-1)^{\sigma-1} \ff{\xi}{n}}{\prod_{k=0}^M \ff{(\xi-\omega_ k)}{\rho_ k+1}} \, d\xi \right) \label{equ:g_n first line}\\
		&= (-1)^n \sum_{m=0}^M \sum_{r=0}^{\rho_m} (-1)^{\sigma-1} \ff{(\omega_m+r)}{n} \Phi_{r,m}(\omega_m+r) \notag\\
		&= (-1)^n \sum_{m=0}^M \sum_{r=0}^{\rho_m} (-1)^{\sigma-1} \ff{(\omega_m+r)}{n} \frac{(-1)^{r-\rho_m}}{\rho_m!} \binom{\rho_m}{r}
				\prod_{\substack{k=0\\ k\neq m}}^M \frac{1}{\rf{(\omega_m+r-\omega_k)}{\rho_ k+1}} \notag\\
		&= (-1)^n \sum_{m=0}^M \sum_{r=0}^{\rho_m} (-1)^{\sigma-1} \ff{(\omega_m+r)}{n} \frac{(-1)^{r-\rho_m}}{\rho_m!} \binom{\rho_m}{r}
				\prod_{\substack{k=0\\ k\neq m}}^M \frac{(-1)^{\rho_k+1}}{\rf{(\omega_k-\omega_m-r)}{\rho_ k+1}} \notag\\
		&= (-1)^n \sum_{m=0}^M \frac{1}{\rho_m!} \sum_{r=0}^{\rho_m}  \binom{\rho_m}{r} \frac{(-1)^r \ff{(\omega_m+r)}{n}}
				{\prod_{\substack{k=0\\ k\neq m}}^M \rf{(\omega_k-\omega_m-r)}{\rho_ k+1}}.\notag
\end{align}
That $g_n=0$ for $0\leq n \leq \sigma-2$ and $g_{\sigma-1}=1$ follow from the definition of $\Gzor$. Alternatively, the expression on line~\eqref{equ:g_n first line} is shown directly to have these values in the proof of Claim~\ref{Claim:G as contour integral}, beginning with equation~\eqref{equ: contour after diff}.
\end{proof}

\subsection{Perfection}
We remind our reader that our vectors are indexed from 0, so that the $j$-th coordinate of $\langle \rho_0,\dots,\rho_M\rangle$ is $\rho_j$. The coordinates of the $(M+1) \times (M+1)$ matrix $\mathbf{H}$ in the next claim is indexed in the same manner.

Claim~\ref{Claim:perfect} is Theorem~\ref{thm:perfect}.

\begin{claim}\label{Claim:perfect}
Fix $\vr \in \NN^{M+1}$ and $\vec\epsilon_0, \vec\epsilon_1, \dots, \vec\epsilon_M \in \ZZ^{M+1}$ with each $\vr+\vec\epsilon_k$ having nonnegative coordinates, and denote the $j$-th coordinate of $\vec\epsilon_i$ as $\vec\epsilon_{i,j}$. Let $S$ be maximum of $\sum_{i=0}^M \vec\epsilon_{i,\beta(i)}$ taken over all permutations $\beta$ of $0,1, \dots, M$, and let $T$ be the minimum of $\sum_{j=0}^M \vec\epsilon_{i,j}$ taken over $0\leq i \leq M$. Suppose the following two conditions are satisfied:
\begin{enumerate}
  \item There is a unique permutation $\alpha$ of $0,1,\dots,M$ with $S=\sum_{i=0}^M \vec\epsilon_{i,\alpha(i)}$;
  \item $T+M = S$.
\end{enumerate}
Then the $(M+1) \times (M+1)$ matrix $\mathbf{H}$, whose $(k,m)$ coordinate is the polynomial $\sH{m}{z}{\vo}{\vr+\vec\epsilon_k}$, has determinant
  \[
  C z^{\sigma(\vr)+T-1},
  \]
where $C$ is nonzero and does not depend on $z$.
\end{claim}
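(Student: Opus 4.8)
The plan is to bracket the polynomial $\det\mathbf{H}(z)$ between an upper bound on its degree and a lower bound on its order of vanishing at $z=0$, to arrange via hypotheses~(i) and~(ii) that these two bounds coincide, and then to compute one coefficient to see that the resulting constant is nonzero. Throughout I would use only Theorem~\ref{thm:consistency} (each approximant is a polynomial of exactly the stated degree, and each remainder vanishes to exactly the stated order) together with the closed form in Claim~\ref{Claim:H explicit}.

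For the degree bound I would expand $\det\mathbf{H}=\sum_{\pi}\sgn(\pi)\prod_{k=0}^M \sH{\pi(k)}{z}{\vo}{\vr+\vec\epsilon_k}$ over permutations $\pi$ of $0,1,\dots,M$. By the uniqueness statement of Theorem~\ref{thm:consistency}, each factor is a polynomial of degree exactly $\rho_{\pi(k)}+\vec\epsilon_{k,\pi(k)}$, so the $\pi$-summand has degree $\sum_{m}\rho_m+\sum_k\vec\epsilon_{k,\pi(k)}=(\sigma(\vr)-M-1)+\sum_k\vec\epsilon_{k,\pi(k)}$. By the definition of $S$ this is at most $(\sigma(\vr)-M-1)+S$, which equals $\sigma(\vr)+T-1$ by hypothesis~(ii), and by hypothesis~(i) this value is attained for exactly one permutation, namely $\pi=\alpha$. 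Hence $\deg\det\mathbf{H}\le\sigma(\vr)+T-1$, and the coefficient $C$ of $z^{\sigma(\vr)+T-1}$ equals $\sgn(\alpha)$ times the product over $k$ of the leading coefficients of $\sH{\alpha(k)}{z}{\vo}{\vr+\vec\epsilon_k}$. Reading off the top-degree term ($r=\rho_m$) of the explicit formula in Claim~\ref{Claim:H explicit}, the leading coefficient of $\Hzor$ is $\frac{1}{\rho_m!}\prod_{k\neq m}\frac{1}{\rf{(\omega_k-\omega_m-\rho_m)}{\rho_k+1}}$; since no difference $\omega_k-\omega_m$ is an integer, every factor of every rising factorial appearing here is nonzero, and therefore $C\neq 0$.

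For the order bound I would use the defining relation $\sG{z}{\vo}{\vr+\vec\epsilon_k}=\sum_{m=0}^M \sH{m}{z}{\vo}{\vr+\vec\epsilon_k}(1-z)^{\omega_m}$, which says that $\mathbf{H}$ maps the column with entries $(1-z)^{\omega_0},\dots,(1-z)^{\omega_M}$ to the column with entries $\sG{z}{\vo}{\vr+\vec\epsilon_0},\dots,\sG{z}{\vo}{\vr+\vec\epsilon_M}$. Replacing the $0$-th column of $\mathbf{H}$ by this column of remainders multiplies $\det\mathbf{H}$ by $(1-z)^{\omega_0}$ (multilinearity, since the other terms $(1-z)^{\omega_j}$ produce repeated columns), and a Laplace expansion of the new determinant along that column expresses $(1-z)^{\omega_0}\det\mathbf{H}$ as a sum of the remainders $\sG{z}{\vo}{\vr+\vec\epsilon_k}$ against polynomial cofactors. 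Since $\ord\sG{z}{\vo}{\vr+\vec\epsilon_k}=\sigma(\vr+\vec\epsilon_k)-1=\sigma(\vr)-1+\sum_j\vec\epsilon_{k,j}\ge\sigma(\vr)-1+T$, the cofactors are analytic at $0$, and $(1-z)^{\omega_0}$ does not vanish at $z=0$, this gives $\ord\det\mathbf{H}\ge\sigma(\vr)+T-1$. Combining with the previous paragraph, $\det\mathbf{H}$ has degree at most $\sigma(\vr)+T-1$ and a zero of order at least $\sigma(\vr)+T-1$ at the origin, hence equals $C z^{\sigma(\vr)+T-1}$ with $C$ the nonzero constant already identified.

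I expect the real work to be combinatorial bookkeeping rather than anything analytic: hypothesis~(ii) is precisely what makes the degree and order estimates meet (without it they leave a gap), while hypothesis~(i) is precisely what guarantees that the extremal coefficient of $\det\mathbf{H}$ is contributed by a single permutation, so that no cancellation can destroy it there. Once that is in place, the one remaining point — that $C\neq0$ — reduces to the nonvanishing of rising factorials evaluated at non-integer translates of the differences $\omega_k-\omega_m$, which is exactly the standing hypothesis on $\vo$; this is also the mechanism by which $\vo$ enters $C$ while playing no role in the exponent of $z$.
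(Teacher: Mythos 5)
Your proposal is correct and follows essentially the same route as the paper: the permutation expansion of $\det\mathbf{H}$ together with hypotheses (i) and (ii) pins the degree at $\sigma(\vr)+T-1$ with a single non-cancelling extremal term, and the relation $\mathbf{H}\vec v=(\text{column of remainders})$ with $\vec v=\langle(1-z)^{\omega_0},\dots,(1-z)^{\omega_M}\rangle^T$ forces a zero of that same order at $z=0$ (the paper multiplies by $\adj(\mathbf{H})$ where you replace a column and Laplace-expand, which is the same mechanism). Your extra step of reading the leading coefficient off Claim~\ref{Claim:H explicit} is not needed for $C\neq0$ (degree-exactness from Theorem~\ref{thm:consistency} already gives it), but it is a nice bonus since it exhibits $C$ explicitly as $\sgn(\alpha)$ times a product of known leading coefficients.
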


\begin{proof}
The determinant of $\mathbf{H}$, by the familiar permutation expansion, is
	\[
	\det(\mathbf{H}) = \sum_{\beta \in \Sigma_{[0,M]}} (-1)^{\sgn(\beta)} \prod_{k=0}^M \sH{\beta(k)}{z}{\vo}{\vr+\vec\epsilon_k},
	\]
which is clearly a polynomial. Notice that
	\begin{multline*}
	\deg\left(\prod_{k=0}^M \sH{\beta(k)}{z}{\vo}{\vr+\vec\epsilon_k}\right) 
              = \sum_{k=0}^M \deg\left(\sH{\beta(k)}{z}{\vo}{\vr+\vec\epsilon_k}\right) \\
              = \sum_{k=0}^M ( \rho_{\beta(k)}+ \vec\epsilon_{k,\beta(k)})
              \leq \sigma(\vr)-(M+1)+S,
	\end{multline*}
with equality achieved for (and only for) $\beta=\alpha$. Consequently,     
    \[\deg(\det(\mathbf{H})) = \sigma(\vr)-M-1+S=\sigma(\vr)+T-1,\] 
and in particular $\det(\mathbf{H})$ is not identically 0. 

Let $\vec v$ be the column vector $\langle (1-z)^{\omega_0}, (1-z)^{\omega_1},\dots,(1-z)^{\omega_M}\rangle^T$. By definition $\mathbf{H}\vec v$ is a column of $M+1$ 
functions of $z$: in row $k$ it is $\sG{z}{\vo}{\vr+\vec\epsilon_k}$, which has a zero of order $\sigma(\vr+\vec\epsilon_k)-1 = \sigma(\vr)+\left(\sum_{j=0}^M \vec\epsilon_{k,j}\right) -1\geq \sigma(\vr) +T-1$. Now multiply $\mathbf{H} \vec v$ by the adjoint of $\mathbf{H}$, which is also a matrix of polynomials. We have
	\begin{align*}
	\det(\mathbf{H})\vec v
		&=\adj(\mathbf{H})\mathbf{H} \vec v \\
		&= \adj(\mathbf{H})
                  \begin{pmatrix}
                    \sG{z}{\vo}{\vr+\vec\epsilon_0} \\
                    \sG{z}{\vo}{\vr+\vec\epsilon_1} \\
                    \vdots \\
                    \sG{z}{\vo}{\vr+ \ve_M}
                  \end{pmatrix} \\
		&= \adj(\mathbf{H})
                  \left( z^{\sigma(\vr)+T-1} \sum_{n=0}^\infty \vec v_n z^n \right) \\
                &= z^{\sigma(\vr)+T-1} \sum_{n=0}^\infty (\adj(\mathbf{H})\vec v_n) z^n
	\end{align*}
for some column vectors $\vec v_0 \neq \vec0, \vec v_1, \dots$. That $\vec v_0 \neq \vec 0$ follows from the definition of $T$.
Each coordinate of $\det(\mathbf{H})\vec v$ has the form $\det(\mathbf{H})(1-z)^\omega$, and so has a zero at $z=0$ of order at most $\deg(\det(\mathbf{H}))= \sigma(\vr)-M-1+S$. By the above displayed equations, each coordinate of $\det(\mathbf{H}) \vec v$ has a zero at $z=0$ of order at least $\sigma(\vr)+T-1$ with equality for some coordinate. But $T+M=S$, by hypothesis, so that $\det(\mathbf{H})$ is a polynomial whose degree coincides with the order of its zero at $z=0$. Therefore
  \[\det(\mathbf{H})= C z^{\sigma(\vr)+T-1}= C z^{\sigma(\vr)+S-M-1},\]
as claimed. The constant $C$ is nonzero as $\det(\mathbf{H})$ is not identically~$0$.
\end{proof}

\section{Opportunities for Further Work}

\begin{enumerate}
\item Is there a nice iterated integral representation of $\Hzor$ without contours, similar to the representation in Theorem~\ref{thm:G}(i) for $\Gzor$?
\item For fixed $\vo$, which degree vectors $\vr^{(0)},\vr^{(1)},\dots,\vr^{(M)}$ lead to a perfect system? There seems to be some geometry involved. That is, a modest amount of computation suggests that for each $M$ there is $B$ such that if any coordinate of any $\vec\epsilon_k-\vec\epsilon_j$ is not between $-B$ and $B$, then the resulting system is not perfect for any $\rho$ (the determinant of $\mathbf{H}$ doesn't have the form $Cz^n$).
\item What is the value of $C$ in Theorem~\ref{thm:perfect}?
\item What is the nice power series expression for $\Hzor$? For $M=1$, this is an important part of the best explicit irrationality measure for $2^{1/3}$.
\end{enumerate}

\begin{bibdiv}
\begin{biblist}
\bib{Baker}{article}{
   author={Baker, A.},
   title={Rational approximations to $\root 3\of 2$ and other algebraic
   numbers},
   journal={Quart. J. Math. Oxford Ser. (2)},
   volume={15},
   date={1964},
   pages={375--383},
   issn={0033-5606},
   review={\MR{171750}},
   doi={10.1093/qmath/15.1.375},
}
\bib{BakerGravesMorris}{book}{
   author={Baker, George A., Jr.},
   author={Graves-Morris, Peter},
   title={Pad\'e approximants},
   series={Encyclopedia of Mathematics and its Applications},
   volume={59},
   edition={2},
   publisher={Cambridge University Press, Cambridge},
   date={1996},
   pages={xiv+746},
   isbn={0-521-45007-1},
   review={\MR{1383091 (97h:41001)}},
   doi={10.1017/CBO9780511530074},
}
\bib{Bennett}{article}{
   author={Bennett, Michael A.},
   title={Rational approximation to algebraic numbers of small height: the
   Diophantine equation $\vert ax^n-by^n\vert =1$},
   journal={J. Reine Angew. Math.},
   volume={535},
   date={2001},
   pages={1--49},
   issn={0075-4102},
   review={\MR{1837094 (2002d:11079)}},
   doi={10.1515/crll.2001.044},
}
\bib{Chudnovsky}{article}{
   author={Chudnovsky, G. V.},
   title={On the method of Thue-Siegel},
   journal={Ann. of Math. (2)},
   volume={117},
   date={1983},
   number={2},
   pages={325--382},
   issn={0003-486X},
   review={\MR{690849 (85g:11058)}},
   doi={10.2307/2007080},
}
\bib{ConcreteMathematics}{book}{
   author={Graham, Ronald L.},
   author={Knuth, Donald E.},
   author={Patashnik, Oren},
   title={Concrete mathematics},
   edition={2},
   note={A foundation for computer science},
   publisher={Addison-Wesley Publishing Company, Reading, MA},
   date={1994},
   pages={xiv+657},
   isbn={0-201-55802-5},
   review={\MR{1397498}},
}

\bib{Jager}{article}{
   author={Jager, H.},
   title={A multidimensional generalization of the Pad\'e table. I, II, III, IV, V, VI},
   journal={Nederl. Akad. Wetensch. Proc. Ser. A 67=Indag. Math.},
   volume={26},
   date={1964},
   pages={193--249},
   review={\MR{0163099 (29 \#402)},\MR{0165287 (29 \#2576)}},
}
\bib{Mahler}{article}{
   author={Mahler, Kurt},
   title={Ein Beweis des Thue-Siegelschen Satzes \"uber die Approximation
   algebraischer Zahlen f\"ur binomische Gleichungen},
   language={German},
   journal={Math. Ann.},
   volume={105},
   date={1931},
   number={1},
   pages={267--276},
   issn={0025-5831},
   review={\MR{1512715}},
   doi={10.1007/BF01455819},
   note={English translation by Karl Levy at \url{https://arxiv.org/abs/1507.01447}.}
}

\bib{NISTonline}{webpage}{
     title = {NIST Digital Library of Mathematical Functions},
     url = {http://dlmf.nist.gov/},
     note = {Online companion to \cite{NISToffline}},
     accessdate = {Release 1.0.8 of 2014-04-25},
}

\bib{NISToffline}{book}{
    editor = {Olver, F. W. J.},
    editor = {Lozier, D. W.},
    editor = {Boisvert, R. F.},
    editor = {Clark, C. W.},
    title = {NIST Handbook of Mathematical Functions},
    publisher = {Cambridge University Press, New York, NY},
    date = {2010},
    note = {Print companion to \cite{NISTonline}},
}
\bib{Riemann}{book}{
    author  =   {Riemann, B.},
    title   =   {Oeuvres Mathimatiques}
    publisher=  {Albert Blanchard}
    address =   {Paris},
    year    =   {1968},
}
\bib{Siegel}{article}{
    author  =   {Siegel, C. L.},
    title   =   {Approximation algebraische zahlen},
    journal =   {Mat. Zeit.},
    volume  =   {10},
    date    =   {1921},
    pages   =   {127--213},
}
\bib{Slater}{book}{
   author={Slater, Lucy Joan},
   title={Generalized hypergeometric functions},
   publisher={Cambridge University Press, Cambridge},
   date={1966},
   pages={xiii+273},
   review={\MR{0201688}},
}

\bib{Thue}{article}{
    author={Thue, A.},
    title={Bemerkung \"{u}ber gewisse N\"{a}hrungsbr\"{u}che algebraischer Zahlen}, 
    journal = {Skrifter udigvne af Videnskabsselskabet i Christiania},
    date={1908},
}

\end{biblist}
\end{bibdiv}
\end{document}